\documentclass{article}
\usepackage{geometry}               
\usepackage{booktabs}
\usepackage{float}

\usepackage{tikz}
\usetikzlibrary{positioning, arrows.meta, fit, calc}

\usepackage{graphicx}
\usepackage{caption}
\usepackage{subcaption}
\usepackage{comment}
\usepackage{amsmath} 
\usepackage{amssymb}  
\usepackage{amsthm}  
\usepackage{bm}
\usepackage{lipsum}
\usepackage{color}
\usepackage{multirow}
\usepackage{array}
\usepackage{cancel}
\usepackage{cite}
\usepackage{hyperref}
\usepackage{ulem}

\usepackage{algorithm}
\usepackage{algpseudocode}

\newtheorem{lem}{Lemma}
\newtheorem{thm}{Theorem}
\newtheorem{assumption}{Assumption}

\newtheorem{remark}{Remark}
\numberwithin{equation}{section}

\newcommand{\diver}{\nabla\!\cdot\!}

\newcommand{\E}{\mathbb{E}}

\newcommand{\R}{\mathbb{R}}

\newcommand{\T}{\mathbb{T}}

\newcommand{\dd}{ \mathrm d}

\title{A Physics-Informed, Global-in-Time Neural Particle Method for the Spatially Homogeneous Landau Equation}

\author{
	Minseok Kim$^{1,\dagger}$,
	Sung-Jun Son$^{2,\dagger}$,
	Yeoneung Kim$^{1,\ast}$,
	Donghyun Lee$^{2,\ast}$
}

\date{}

\begin{document}
	\maketitle
	
	\noindent
	$^{1}$Department of Applied Artificial Intelligence,
	Seoul National University of Science and Technology, Seoul, Republic of Korea\\
	$^{2}$Department of Mathematical Sciences,
	Pohang University of Science and Technology (POSTECH), Pohang, Republic of Korea
	
	\vspace{0.5em}
	\noindent
	$^{\dagger}$These authors contributed equally to this work.\\
	$^{\ast}$Corresponding authors. YK: yeoneung@seoultech.ac.kr, DL: donglee@postech.ac.kr
	
\begin{abstract}
We propose a physics-informed neural particle method (PINN–PM) for the spatially homogeneous Landau equation. The method adopts a Lagrangian interacting-particle formulation and jointly parameterizes the time-dependent score and the characteristic flow map with neural networks. 
Instead of advancing particles through explicit time stepping, the Landau dynamics is enforced via a continuous-time residual defined along particle trajectories. This design removes time-discretization error and yields a mesh-free solver that can be queried at arbitrary times without sequential integration. We establish a rigorous stability analysis in an $L^2_v$ framework. 
The deviation between learned and exact characteristics is controlled by three interpretable sources: (i) score approximation error, (ii) empirical particle approximation error, and (iii) the physics residual of the neural flow. This trajectory estimate propagates to density reconstruction, where we derive an $L^2_v$ error bound for kernel density estimators combining classical bias–variance terms with a trajectory-induced contribution. Using Hyv\"arinen’s identity, we further relate the oracle score-matching gap to the $L^2_v$ score error and show that the empirical loss concentrates at the Monte Carlo rate, yielding computable a posteriori accuracy certificates. Numerical experiments on analytical benchmarks, including the two- and three-dimensional BKW solutions, as well as reference-free configurations, demonstrate stable transport, preservation of macroscopic invariants, and competitive or improved accuracy compared with time-stepping score-based particle and blob methods while using significantly fewer particles.
\end{abstract}

\section{Introduction}\label{sec:intro}
The spatially homogeneous Landau equation models the evolution of the velocity distribution of charged particles undergoing grazing collisions. It describes the time evolution of the velocity distribution $\tilde f_t(v)$:\footnote{We focus on the Coulomb case $\gamma=-3$; the Maxwell case $\gamma=0$ follows verbatim.}
\begin{equation}\label{eq:landau}
\partial_t \tilde f_t(v) = \nabla_v \cdot \!
\int_{ \Omega} A(v-v^*) \bigl( \tilde f_t(v^*)\nabla_v \tilde f_t(v) - \tilde f_t(v)\nabla_{v^*} \tilde f_t(v^*) \bigr) \dd v^* , \quad v\in \Omega.
\end{equation}
Here $A(z)=C_{\gamma}|z|^{\gamma}\bigl(|z|^2 I - z\otimes z\bigr)$ for $\gamma\in[-3,1]$. Equation~\eqref{eq:landau} possesses a gradient--flow structure and conserves mass, momentum, and energy while dissipating entropy \cite{V1998}. Numerically, the challenge lies in designing schemes that (i) capture this long--time structure, (ii) scale efficiently to high dimensions, and (iii) provide verifiable accuracy certificates.

\paragraph{Mathematical and Numerical Background.}
The mathematical study of the Landau equation dates back to the seminal work of Landau \cite{L1936} and its derivation from the Boltzmann equation \cite{DLD1992}. Analytical breakthroughs regarding weak solutions, regularity, and convergence to equilibrium have been established in works by Villani and Desvillettes \cite{V1998, DV2000a, DV2000b, V1998b}, with recent results addressing global regularity \cite{GS2025}. Translating these insights into robust numerical simulations has been a persistent challenge. Early structure-preserving efforts focused on conservation laws and entropy decay (e.g., entropy-consistent finite-volume and spectral solvers \cite{CS1998, DLD1994, PRT2000}). However, grid-based methods often struggle with the \textit{curse of dimensionality}. As an alternative, particle-based approaches like Direct Simulation Monte Carlo (DSMC) \cite{DCP2010} and Particle-In-Cell (PIC) variants \cite{GLD2011, BCH2024} were developed. While naturally mesh-less, they often suffer from statistical noise or require complex re-projection steps to maintain conservation. Recent works have also explored randomized interaction reductions such as the random batch method \cite{JDW2022} and structure-preserving discretizations \cite{H2021} (see also the survey in \cite{JKF2019}).

\paragraph{The Score-Based Particle (SBP) Paradigm.}
A recent paradigm shift stems from re-interpreting the Landau equation as a gradient flow in the Wasserstein metric \cite{JDW2022, JDDW2024}. This perspective links kinetic theory to score matching, a technique pioneered by Hyv\"arinen \cite{A2005} and popularized in generative modeling \cite{SE2019, SE2020}. In this context, the velocity field's nonlinearity is driven by the {score} $s=\nabla\log f$. Building on this, Huang and Wang proposed the {score-based particle} (SBP) methodology \cite{YL2025}. SBP learns the score dynamically to bypass expensive kernel density estimation (KDE) while advancing particles via a structure-preserving ODE. This method inherits conservation from deterministic particles, scales favorably with dimension (empirically $\mathcal O(N)$), and provides theoretical links between the Kullback–Leibler divergence and the score-matching objective \cite{YL2025}.

Despite these advances, SBP (and most particle formulations) still evolve trajectories via an explicit time loop. Consequently, accuracy and computational cost are tightly coupled to the time step $\Delta t$. Even with an exact score, one encounters Monte Carlo sampling error $\mathcal O(N^{-1/2})$ and time discretization error $\mathcal O(\Delta t)$, making accuracy certification dependent on external time-step control. Moreover, existing SBP analyses typically rely on coercivity assumptions on the collision kernel to obtain entropy-based stability estimates. In contrast, our trajectory-based analysis only requires boundedness and Lipschitz regularity of the interaction kernel, as it relies on stability of the characteristic flow rather than entropy dissipation.

\paragraph{Physics-Informed Neural Networks (PINNs).}
Physics-Informed Neural Networks (PINNs) have emerged as a mesh-free paradigm for solving partial differential equations by embedding the governing equation directly into the training objective \cite{RPK2019}.  Instead of discretizing time and space explicitly, PINNs approximate the solution
as a neural function and enforce the PDE through a continuous-time residual loss
evaluated at collocation points. This approach has demonstrated effectiveness in high-dimensional settings, where traditional grid-based solvers suffer from the curse of dimensionality.

However, most existing PINN formulations operate in an Eulerian framework, where the density field itself is parameterized and the PDE residual is enforced pointwise. For nonlinear kinetic equations such as the Landau equation, this strategy faces two difficulties: (i) the collision operator is nonlocal and quadratic in the density, and (ii) characteristic transport plays a central structural role.

We propose a fundamentally different perspective. Instead of alternating between score learning and time stepping, we learn the entire interacting particle dynamics as a global-in-time neural flow. Concretely, we parameterize both the score $s(v,t)$ and the characteristic map $\Phi_\xi(v_0,t)$ as neural networks defined on $(v,t)$, and enforce the Landau dynamics through a continuous-time physics residual. Once trained, the model acts as a neural particle simulator: given initial samples $V_i$, particle configurations at arbitrary times are generated directly by neural inference, without sequential integration.

\paragraph{Contributions.}
We propose PINN-based particle method (PINN--PM) for solving the Landau equation and our main contributions are as follows:

\begin{itemize}
\item \textbf{Global-in-time neural interacting particle formulation.}
We remove explicit time discretization by jointly learning the score and characteristic flow over the time horizon.

\item \textbf{Neural particle simulator.}
The learned flow map $\Phi_\xi$ enables mesh-free temporal inference: particle configurations at arbitrary times are obtained through a single forward pass, without time stepping.

\item \textbf{Residual-based error certificates.}
We establish trajectory stability and density reconstruction bounds linking score error, measure mismatch, and physics residual to deployment-time accuracy.
\end{itemize}

Compared to SBP~\cite{YL2025}, our approach eliminates time-step dependence, provides end-to-end error guarantees directly tied to training losses, and empirically achieves competitive or improved accuracy with significantly fewer particles.

\paragraph{Organization.} The remainder of this paper is organized as follows.
Section \ref{sec:prelim} presents the problem formulation and the proposed PINN–PM methodology.
Section \ref{sec:Neu cha flow} develops the stability and trajectory error analysis of the neural characteristic flow.
Section \ref{sec:certi} establishes an oracle-to-dynamics certification framework connecting score learning errors to Wasserstein stability and density reconstruction.
Section \ref{sec:exp} reports numerical experiments, and Section \ref{subsec:discussion} concludes this paper.

\noindent{\bf Notation :} To avoid confusion, we fix the following convention:
\begin{itemize}
\item We use $|a|$ to denote the absolute value of a scalar $a \in \R$, and $\|x\|$ to denote the Euclidean norm of a vector $x \in \mathbb{R}^d$.
On the torus $\mathbb{T}^d = \mathbb{R}^d/\mathbb{Z}^d$, we also use $\|x-y\|$ 
to denote the torus distance
\[
\|x-y\| := \min_{k\in\mathbb{Z}^d} \|x-y+k\|.
\]
\item For a function $f = f(v)$ defined on the $d$-dimensional torus $\mathbb{T}^d$, we write
\[
\|f\|_{L^2_v} = \Big( \int_{\mathbb{T}^d} |f(v)|^2 \, dv \Big)^{1/2},
\]
and, when $f$ depends also on time $t$, we use the notation 
$\|f_t\|_{L^2_v}$ to emphasize that the norm is taken only with respect to 
the velocity variable $v$.

\item The expectation symbol $\E[\cdot]$ without a subscript always refers to the expectation  with respect to the randomness of particle {sampling} (Monte Carlo average) at time $t=0$. 
\end{itemize}

\section{Problem formulation and deterministic particle approximation}
\label{sec:prelim}

\subsection{Spatially homogeneous Landau equation}

We consider the spatially homogeneous Landau equation in velocity space:
\begin{equation}
\partial_t \tilde f_t(v)
=
\nabla_v \cdot
\int_{\Omega}
A(v-v^*)
\bigl(
\tilde f_t(v^*) \nabla_v \tilde f_t(v)
-
\tilde f_t(v)\nabla_{v^*}\tilde f_t(v^*)
\bigr)
\, dv^*,
\quad v\in\Omega,
\label{eq:landau_pde}
\end{equation}
where $\Omega=\mathbb{R}^d$ or $\mathbb{T}^d$ (see Remark~\ref{rmk:domain}), and
\[
A(z)=C_\gamma |z|^\gamma (|z|^2 I - z\otimes z),
\qquad \gamma\in[-3,1].
\]
We focus on the Coulomb case $\gamma=-3$ and the Maxwell case $\gamma=0$.

\begin{remark}[Domain and Boundary Conditions]\label{rmk:domain}
We formulate the problem on the torus $\Omega = \mathbb{T}^d$ to simplify the exposition of boundary terms. This choice avoids technical digressions on decay rates at infinity while retaining the core nonlinear structure of the collision operator. However, our analytical framework and the PINN formulation extend directly to the whole space $\mathbb{R}^d$ under standard regularity and decay assumptions (i.e., $f_t(v)$ and an approximate score $s_\theta(v,t)$ vanishing sufficiently fast as $|v|\to\infty$ to validate integration--by--parts identities). Since the Landau operator is local in space (homogeneous) and nonlocal only in velocity, the distinction between $\mathbb{T}^d$ and $\mathbb{R}^d$ does not alter the essential methodology.
\end{remark}

Equation~\eqref{eq:landau_pde} admits the transport form
\begin{equation}
\partial_t \tilde f_t
=
- \nabla_v \cdot \bigl( \tilde f_t U[\tilde f_t] \bigr),
\label{eq:landau_transport}
\end{equation}
where the nonlinear velocity field is given by
\begin{equation}
U[\tilde f_t](v)
=
-\int_{\Omega}
A(v-v^*)
\bigl(
\nabla_v \log \tilde f_t(v)
-
\nabla_v \log \tilde f_t(v^*)
\bigr)
\tilde f_t(v^*)
\, dv^*.
\label{eq:mean_field_drift}
\end{equation}

Introducing the score function
\[
\tilde s_t(v):=\nabla_v \log \tilde f_t(v),
\]
the drift can be written compactly as
\[
U[\tilde f_t](v)
=
-\int_{\Omega}
A(v-v^*)
\bigl(
\tilde s_t(v)-\tilde s_t(v^*)
\bigr)
\tilde f_t(v^*)
\, dv^*.
\]

Thus, the spatially homogeneous Landau equation may be interpreted as a nonlinear transport equation in velocity space driven by a mean-field interaction expressed through score differences.

Under suitable regularity assumptions on $\tilde f_t$, the transport form~\eqref{eq:landau_transport} admits a Lagrangian representation.

Let $T_t:\Omega\to\Omega$ denote the characteristic flow defined by
\begin{equation}
\frac{d}{dt} T_t(v_0)
=
U[\tilde f_t]\bigl(T_t(v_0)\bigr),
\qquad
T_0(v_0)=v_0.
\label{eq:true_flow}
\end{equation}

Then the solution of~\eqref{eq:landau_transport} can be written as the pushforward
\begin{equation}
\tilde f_t = T_t \# f_0,
\label{eq:pushforward}
\end{equation}
in the sense that for any smooth test function $\phi$,
\[
\int_\Omega \phi(v)\tilde f_t(v)\,dv
=
\int_\Omega \phi(T_t(v_0)) f_0(v_0)\,dv_0.
\]

This formulation highlights that the Landau equation can be interpreted as the mean-field limit of an interacting particle system in velocity space.

\subsection{Deterministic interacting particle approximation}

To approximate the pushforward representation~\eqref{eq:pushforward}, we consider $N$ particles
\[
\{v^{(i)}_t\}_{i=1}^N
\]
initialized as i.i.d.\ samples from $f_0$.

The empirical measure is defined by
\begin{equation}
f_t=\frac{1}{N}
\sum_{i=1}^N
\delta_{v^{(i)}_t}.
\label{eq:empirical_measure}
\end{equation}

Replacing the mean-field integral in~\eqref{eq:mean_field_drift} by its empirical approximation yields the deterministic interacting particle system
\begin{equation}
\frac{d}{dt} v^{(i)}_t
=
-
\frac{1}{N}
\sum_{j=1}^N
A(v^{(i)}_t -v^{(j)}_t )
\bigl(
\tilde s_t(v^{(i)}_t)
-
\tilde s_t(v^{(j)}_t)
\bigr),
\qquad i=1,\dots,N.
\label{eq:particle_exact_score}
\end{equation}

In practice, the exact score $\tilde s_t$ is unknown. Classical deterministic particle methods therefore approximate the score either by kernel density estimation or by neural score matching.

A representative time-discrete scheme, as used in score-based particle methods, reads
\begin{equation}
v_{t_{n+1}}^{(i)}
=
v_{t_n}^{(i)}
-
\Delta t
\frac{1}{N}
\sum_{j=1}^N
A(v_{t_n}^{(i)}-v_{t_n}^{(j)})
\bigl(
s_\theta(v_{t_n}^{(i)},t_n)
-
s_\theta(v_{t_n}^{(j)},t_n)
\bigr),
\label{eq:sbp_discrete}
\end{equation}
where $s_\theta$ denotes a learned approximation of the score.

This formulation preserves the interacting particle structure but introduces two fundamental sources of numerical error:
\begin{itemize}
\item time discretization error of order $\mathcal{O}(\Delta t)$,
\item Monte Carlo sampling error of order $\mathcal{O}(N^{-1/2})$.
\end{itemize}

Moreover, the particle dynamics must be advanced sequentially in time, so that computational cost and accuracy remain coupled to the time-step size.

\subsection{Global-in-time Neural Parameterization}

To overcome the limitations of discrete time-stepping, we propose the \textbf{PINN--PM} framework. The core idea is to parameterize both the particle trajectories and the score function as continuous functions of time, unifying density estimation and dynamics evolution into a single global-in-time optimization problem.

We introduce two neural networks shared across all particles and time instants:
\begin{itemize}
    \item \textbf{Dynamics Network (Flow Map):} $\Phi_\xi: \mathbb{R}^d \times [0,T] \to \mathbb{R}^d$. \\
    This network approximates the Lagrangian flow map $v_t = T_t(v_0)$. Given a set of initial particles $\{V_i\}_{i=1}^N \sim f_0$, the position of the $i$-th particle at any arbitrary time $t$ is given explicitly by:
    \begin{equation}\label{eq:flow-map}
        \hat v^{(i)}_t := \Phi_\xi(V_i, t).
    \end{equation}
    This parameterization ensures that the empirical measure is defined continuously in time as $f_t = \frac{1}{N}\sum_{i=1}^N \delta_{\Phi_\xi(V_i, t)}$.
    
    \item \textbf{Score Network:} $s_\theta: \mathbb{R}^d \times [0,T] \to \mathbb{R}^d$. \\
    This network approximates the time-dependent score function $s_\theta(v,t) \approx \nabla_v \log f_t(v)$. Unlike previous approaches that train separate models for each time step, $s_\theta$ learns the score evolution over the entire horizon $[0,T]$ simultaneously.
\end{itemize}

\begin{figure}[t]
\centering
\includegraphics[width=0.8\textwidth]{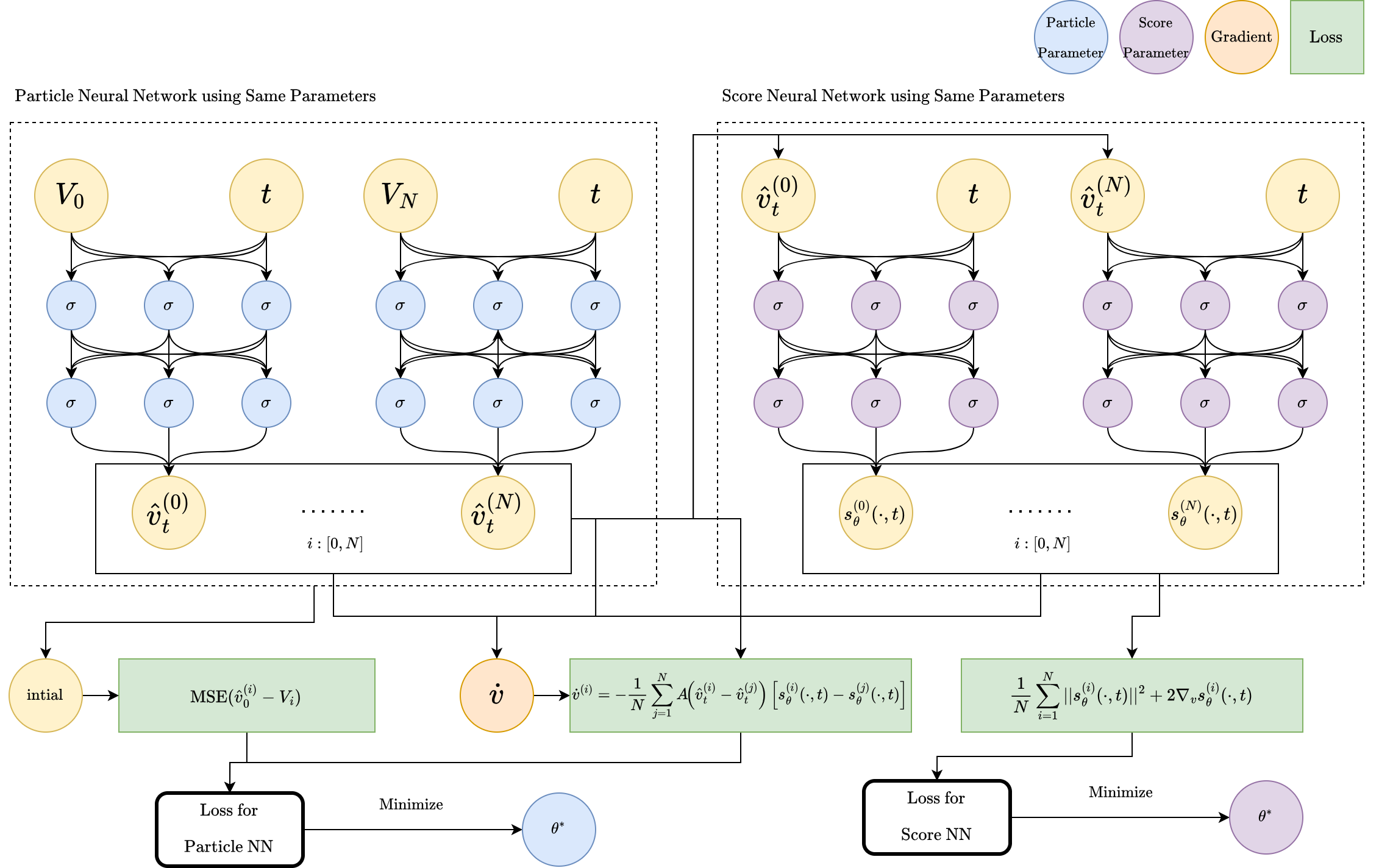}
\caption{\textbf{Conceptual architecture of PINN--PM.}
Left: global-in-time trajectory network $\Phi_\xi$ (shared parameters across particles).
Right: global-in-time score network $s_\theta$.
Both networks are trained jointly through a physics residual enforcing the Landau characteristics
and an implicit score-matching loss.
The framework removes explicit time stepping and enables direct querying at arbitrary times.}
\label{fig:concept}
\end{figure}

Figure~\ref{fig:concept} illustrates the overall architecture of the proposed PINN--PM. Unlike time-stepping particle methods, which repeatedly update particles and retrain the score, our framework jointly learns (i) a global-in-time trajectory network $\Phi_\xi(V_i,t)$ and (ii) a global-in-time score network $s_\theta(v,t)$. The two networks are coupled through a physics residual enforcing the Landau characteristic equation and an implicit score-matching objective. This design eliminates explicit temporal discretization and enables mesh-free inference in time.

To implement the global-in-time parameterization in practice,
we adopt a structured embedding design illustrated in
Figure~\ref{fig:global_arch}.
The initial velocity $V_i$ and time variable $t$
are first mapped through separate embedding networks,
producing velocity features $E^{i,v}$ and time features $E^{i,t}$.
These embeddings are subsequently fused and processed by
shared hidden layers to produce the continuous-time output
$v^{(i)}_t$.

This separation of velocity and time embeddings
stabilizes training and improves representation capacity,
while preserving the interpretation of $\Phi_\xi$
as a global flow map defined over $(v_0,t)$.

A key consequence of this design is that, once trained, the network $\Phi_\xi$ functions as a neural particle simulator. Given initial samples $V_i$, particle configurations at any time $t$
are obtained via a single forward evaluation, without numerical integration. This contrasts fundamentally with time-stepping particle methods, which require sequential updates and incur $\Delta t$-dependent errors.

\begin{figure}[H]
\centering
\includegraphics[width=0.5\linewidth]{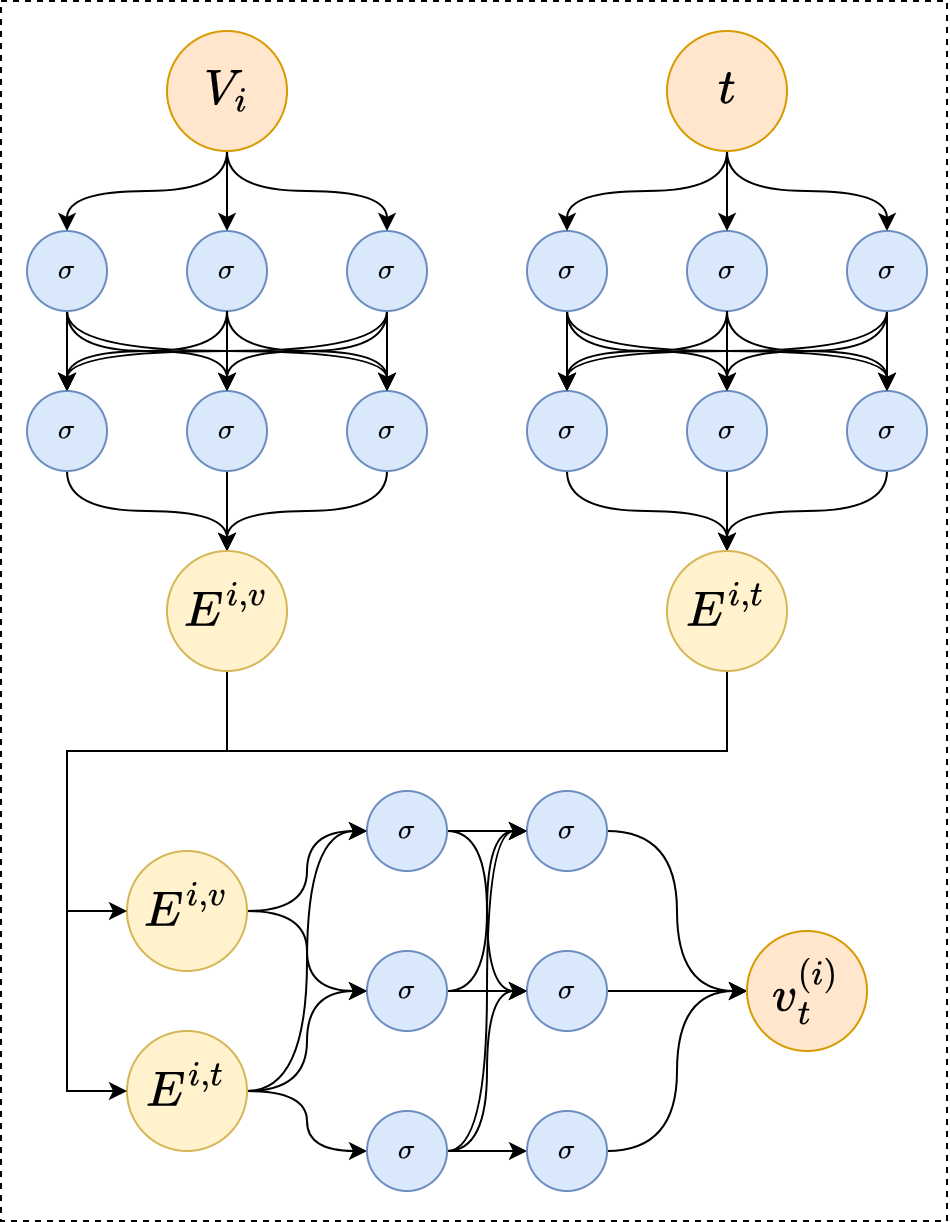}
\caption{
\textbf{Global-in-time architecture of PINN--PM.}
Separate velocity and time embeddings are processed by neural blocks
and fused to produce the continuous-time trajectory $v^{(i)}_t$.
The architecture enables mesh-free querying in time
without explicit time stepping.
}\label{fig:global_arch}
\end{figure}

\subsection{Physics-Informed Global Flow Learning: Formulation and Algorithm}
We now describe the unified training formulation of PINN--PM, which jointly learns the global characteristic flow and the time-dependent score without explicit time discretization.

We train the parameters $(\xi, \theta)$ jointly by minimizing a composite loss function that enforces (i) statistical consistency of the score (via Implicit Score Matching) and (ii) physical consistency of the trajectories (via the Landau characteristic residual).

\paragraph{(1) Implicit Score Matching (ISM) in Continuous Time.}
Since the true density is unknown, we employ the Hyv\"arinen objective. For a batch of collocation times $\{t_k\}_{k=1}^{N_t} \subset [0,T]$, we minimize:
\begin{equation}\label{eq:ism-loss}
\widehat{\mathcal{L}}_{\mathrm{ISM},t}(s_\theta) 
\;=\; 
\frac{1}{N_t N} \sum_{k=1}^{N_t} \sum_{i=1}^{N} 
\Bigl( \|s_\theta(v^{(i)}_{t_k}, t_k)\|^2 + 2 \nabla_v \cdot s_\theta(v^{(i)}_{t_k}, t_k) \Bigr),
\end{equation}
where $v_i(t_k) = \Phi_\xi(v_i^0, t_k)$. As shown in Section~\ref{sec:ism_control}, minimizing this objective ensures that $s_\theta$ converges to the true score of the push-forward density generated by $\Phi_\xi$.

\paragraph{(2) Physics Residual for Landau Dynamics.}
To ensure that the learned flow $\Phi_\xi$ obeys the Landau equation, we penalize the deviation from the characteristic ODE \eqref{eq:particle_exact_score}. The approximate mean-field drift at time $t$ is computed using the learned score:
\[
U_t^\delta(v) := -\frac{1}{N}\sum_{j=1}^N A\bigl(v -v^{(j)}_t\bigr)\bigl[s_\theta(v, t) - s_\theta(v^{(j)}_t, t)\bigr].
\]
We define the continuous-time physics residual for the $i$-th particle as:
\[
\mathcal{R}_i(t; \xi, \theta) := \frac{\partial \Phi_\xi}{\partial t}(V_i, t) - U_t^\delta\bigl(\Phi_\xi(V_i, t)\bigr).
\]
The physics loss is then the mean squared residual over the collocation points:
\begin{equation}\label{eq:phys-loss}
\mathcal{L}_{\mathrm{phys}}(\xi, \theta) \;=\; \frac{1}{N_t N} \sum_{k=1}^{N_t} \sum_{i=1}^{N} \|\mathcal{R}_i(t_k; \xi, \theta)\|^2.
\end{equation}
Note that $\partial_t \Phi_\xi$ is computed exactly via automatic differentiation, avoiding finite difference errors.

\paragraph{Total Loss.}
The final end-to-end objective is:
\begin{equation}\label{eq:full-loss}
\mathcal{L}(\xi, \theta) \;=\; \mathcal{L}_{\mathrm{phys}}(\xi, \theta) \;+\; \lambda_{\mathrm{score}} \widehat{\mathcal{L}}_{\mathrm{ISM},t}(s_\theta) 
\end{equation}
where $\lambda_{\mathrm{score}}$ balances the physics and statistical constraints.

The training procedure is summarized in Algorithm~\ref{alg:pinnpm_train}. A key advantage of this formulation is {amortized inference}: once trained, the particle configuration at any query time $t$ can be obtained via a single forward pass of $\Phi_\xi$, without strictly sequential integration.

\begin{algorithm}[H]
\caption{PINN--PM: joint global-in-time training}
\label{alg:pinnpm_train}
\begin{algorithmic}[1]
\Require Initial distribution $f_0$, time horizon $[0,T]$
\State Initialize neural networks $\Phi_\xi(\cdot,t)$ and $s_\theta(v,t)$
\While{not converged}
    \State Sample particles $\{V_i\}_{i=1}^N \sim f_0$
    \State Sample times $\{t_k\}_{k=1}^{N_t} \sim \mathcal{U}([0,T])$
    \For{$k=1,\dots,N_t$}
        \State Compute trajectories $\hat v^{(i)}_{t_k}=\Phi_\xi(V_i,t_k)$
        \State Construct empirical drift
        \[
        U_{t_k}^\delta(v)
        :=
        -\frac{1}{N}
        \sum_{j=1}^N
        A\bigl(v-\hat v^{(j)}_{t_k}\bigr)
        \bigl(
        s_\theta(v,t_k)
        -
        s_\theta(\hat v^{(j)}_{t_k},t_k)
        \bigr)
        \]
        \State Compute residuals
        \[
        \rho^{(i)}(t_k)
        :=
        \partial_t \Phi_\xi(V_i,t_k)
        -
        U_{t_k}^\delta\bigl(\hat v^{(i)}_{t_k}\bigr)
        \]
    \EndFor
    \State Physics loss
    \[
    \mathcal L_{\mathrm{phys}}
    =
    \frac{1}{N_tN}
    \sum_{k=1}^{N_t}
    \sum_{i=1}^N
    \|\rho^{(i)}(t_k)\|^2
    \]
    \State Score-matching loss
    \[
    \mathcal L_{\mathrm{ISM},t}
    =
    \frac{1}{N_tN}
    \sum_{k=1}^{N_t}
    \sum_{i=1}^N
    \Big(
    \|s_\theta(\hat v^{(i)}_{t_k},t_k)\|^2
    +
    2\,\nabla_v\!\cdot s_\theta(\hat v^{(i)}_{t_k},t_k)
    \Big)
    \]
    \State Joint gradient update
    \[
    (\xi,\theta)
    \leftarrow
    (\xi,\theta)
    -
    \eta
    \nabla_{(\xi,\theta)}
    \big(
    \mathcal L_{\mathrm{phys}}
    +
    \lambda_{\mathrm{score}}
    \mathcal L_{\mathrm{ISM},t}
    \big)
    \]
\EndWhile
\State \textbf{Inference:} output $\hat v_i(t)=\Phi_\xi(V_i,t)$ for any $t\in[0,T]$
\end{algorithmic}
\end{algorithm}

\section{Stability of the Neural Characteristic Flow}\label{sec:Neu cha flow}
The following results establish the theoretical justification of the neural particle simulator: we show that controlling the score loss and physics residual during training directly bounds the simulator's deployment error.

Now we analyse the error of the proposed \textbf{PINN--PM} method in the continuous setting. Our framework is based on $L^2_v$--expectations, which allows us to directly quantify the discrepancy between the empirical distribution $f_t$ generated by the algorithm and the exact solution $\tilde f_t$ of the Landau equation. 
This perspective naturally decomposes the total error into three components: 
(i) the {trajectory error}, arising from deviations between exact and approximate particle characteristics; 
(ii) the {score error}, due to approximation of the true score via implicit score matching; and 
(iii) the {particle error}, caused by the finite--particle approximation of the dynamics. 
The $L^2_v$ framework further enables quantitative bounds describing how these errors propagate and accumulate in time.

Let $\tilde f_t \in C^1([0,T];W^{2,\infty}(\T^d))$, $d\in\{2,3\}$, denote the solution of the spatially homogeneous Landau equation
\begin{equation}\label{eq:landau conti}
\partial_t \tilde f_t = -\diver \!\bigl(\tilde f_t U[\tilde f_t]\bigr), \qquad
U[\tilde f_t](v) := -\int_{\T^d}\!A(v-v^*)\bigl(\tilde s_t(v)-\tilde s_t(v^*)\bigr)\tilde f_t(v^*)\dd v^*,
\end{equation}
where $\tilde s_t := \nabla_v \log \tilde f_t$ is the exact score. 
We assume throughout that the interaction kernel is uniformly bounded,
\begin{equation}\label{eq:A-bounds}
0 \;\preceq\; A(z) \;\preceq\; \lambda_2 I, \qquad \forall z\in\T^d .
\end{equation}
where $I$ is identity matrix in $d\times d$.
Denote by $T_t:\T^d\to\T^d$ the exact flow map defined by
\begin{equation}\label{eq:true-flow}
\frac{\dd}{\dd t}T_t(v_0) = U[\tilde f_t]\bigl(T_t(v_0)\bigr), 
\qquad T_0(v_0)=v_0.
\end{equation}
Thus the exact solution can be written as the push--forward  $\tilde f_t = T_t \# f_0$ of the initial density $f_0$.

In the \textbf{PINN--PM} method, both the velocity field and the flow map are replaced by neural approximations. 
Given an approximate score $s_\theta(v,t)$ and the empirical distribution $f_t$, we evolve particle trajectories according to
\begin{equation}\label{eq:approx-flow}
\frac{\dd}{\dd t}\hat v_t 
= U^\delta_t(\hat v_t)
:= -\int_{\T^d}\!A(\hat v_t-v^*)\bigl(s_\theta(\hat v_t,t)-s_\theta(v^*,t)\bigr)f_t(v^*)\dd v^*,
\qquad \hat v_0=v_0.
\end{equation}

Note that in our implementation, this approximate trajectory is explicitly parameterized by the global dynamics network, i.e., $\hat v_t(v_0) \equiv \Phi_\xi(v_0, t)$. Thus, the analysis below directly quantifies the error of the learned neural flow.

The corresponding approximate flow map $\hat T_t$ then pushes forward the initial data $f_0$ to yield the empirical measure
\begin{equation}\label{eq:empirical-measure}
f_t = \frac{1}{N}\sum_{i=1}^N \delta_{\hat T_t(V_i)}, 
\qquad V_i \;\overset{\text{i.i.d.}}{\sim}\; f_0.
\end{equation}
Hence $f_t$ should be interpreted as the particle approximation induced by the neural flow map $\hat T_t$, 
in contrast to the exact density $\tilde f_t=T_t\# f_0$.

\subsection{Assumptions and Drift Regularity}\label{sec:traj-error}

In this subsection we derive an estimate for the deviation between the exact and approximate particle trajectories, that is, between $v_t = T_t(v_0)$ solving \eqref{eq:true-flow} and $\hat v_t = \hat T_t(v_0)$ solving  \eqref{eq:approx-flow}. Throughout the analysis we fix a time horizon $T>0$ and assume that both flows remain in a common ball, so that the Lipschitz bounds on the interaction kernel $A$ and the score functions are valid. 
The analysis is based on a perturbation argument for the respective dynamics and the use of Gronwall-type inequalities, which allow us to quantify how the local errors induced by the score approximation and the empirical measure propagate along the trajectories. Before proceeding, we shall introduce the following assumptions that are enforced throughout the paper:

\begin{assumption}[Regularity and boundedness]\label{ass:regularity}
Let $T>0$ and $t\in[0,T]$.
\begin{enumerate}
\item The interaction matrix $A:\T^d\to\R^{d\times d}$ is symmetric, and satisfies the uniform bound
\begin{equation}\label{eq:A-two-sided}
0\preceq A(z)\preceq \lambda_2 I ,\qquad \forall z\in\T^d,
\end{equation}
where $I$ is identity matrix in $d\times d$, and is Lipschitz continuous for $ -3< \gamma \leq 1$:
\begin{equation}\label{eq:A-Lip}
\|A(z)-A(w)\|\le L_A\|z-w\|,\qquad \forall z,w\in\T^d,
\end{equation}
Here and throughout, $\|\cdot\|$ denotes the Euclidean norm for vectors and the associated operator norm for matrices.
\item For each \(t\in[0,T]\), the approximate score \(s_\theta(\cdot,t)\) is Lipschitz:
\[
\|s_\theta(v_1,t)-s_\theta(v_2,t)\|
\le L_\theta(t)\|v_1-v_2\|,
\qquad \forall v_1,v_2\in\T^d.
\]
\item Define the score mismatch magnitude
\[
g_t(v):=\|s_\theta(v,t)-\tilde s_t(v)\|.
\]
Then $g_t$ is Lipschitz with constant $L_g(t)$:
\[
|g_t(v_1)-g_t(v_2)|
\le
L_g(t)\|v_1-v_2\|,
\qquad \forall v_1,v_2\in\T^d, \; t\in[0,T].
\]

\item The true score function $\tilde s(v,t)$ satisfies 
\[
L_s(t):=\|\nabla_v \tilde s_t\|_{L^\infty(\T^d)}<\infty.
\]
\item Denote by $W_1$ the 1–Wasserstein distance on $\T^d$.  The distributional error between the approximate and true densities is
\begin{equation}\label{eq:W1-def}
\Delta_f(t):=W_1\!\bigl( f_t,\tilde f_t\bigr) <\infty .
\end{equation}
\end{enumerate}
\end{assumption}

\begin{remark}\label{rem:Landau-regularization}
In the Coulomb case ($\gamma=-3$) the Landau kernel is singular and not globally Lipschitz.  Assumptions \eqref{eq:A-two-sided}--\eqref{eq:A-Lip} hold for the commonly used truncated/mollified kernels on bounded velocity domains, which is the regime targeted by particle computations; the above bounds therefore apply to such regularized models.
\end{remark}

We rely on the result shown in \cite{YL2025}, which demonstrates that if the neural network–trained score closely approximates the exact score, then the solution is also accurately approximated.
For completeness, we restate the theorem below, which shows that the KL divergence between the empirical measure $f_t$ and the exact Landau solution $\tilde f_t$ is controlled by the score-matching error, thereby formalizing the fact that the quality of score approximation directly determines the accuracy of the solution.

Figure~\ref{fig:error_flow_simple} summarizes the logical structure of our error analysis.  The central objective is to relate measurable training-time quantities to deployment-time accuracy of trajectories and densities.

We begin with three interpretable error sources: (i) the score approximation error $\delta_s(t)$, (ii) the measure mismatch $\Delta_f(t)$ between empirical and exact densities, and (iii) the PINN dynamics residual $\delta_{\mathrm{phys}}(t)$.
These quantities enter the drift mismatch at a fixed state,
which is subsequently propagated along characteristics
via a Gr\"onwall-type stability argument.

A synchronous coupling argument then closes the estimate at the expectation level, yielding a bound for $\mathbb E\|e_t\|^2$. The analysis is further connected to training objectives: the implicit score-matching (ISM) loss controls the $L^2$ score error, while kernel density reconstruction translates trajectory error into density error with an explicit bias–variance–trajectory decomposition.

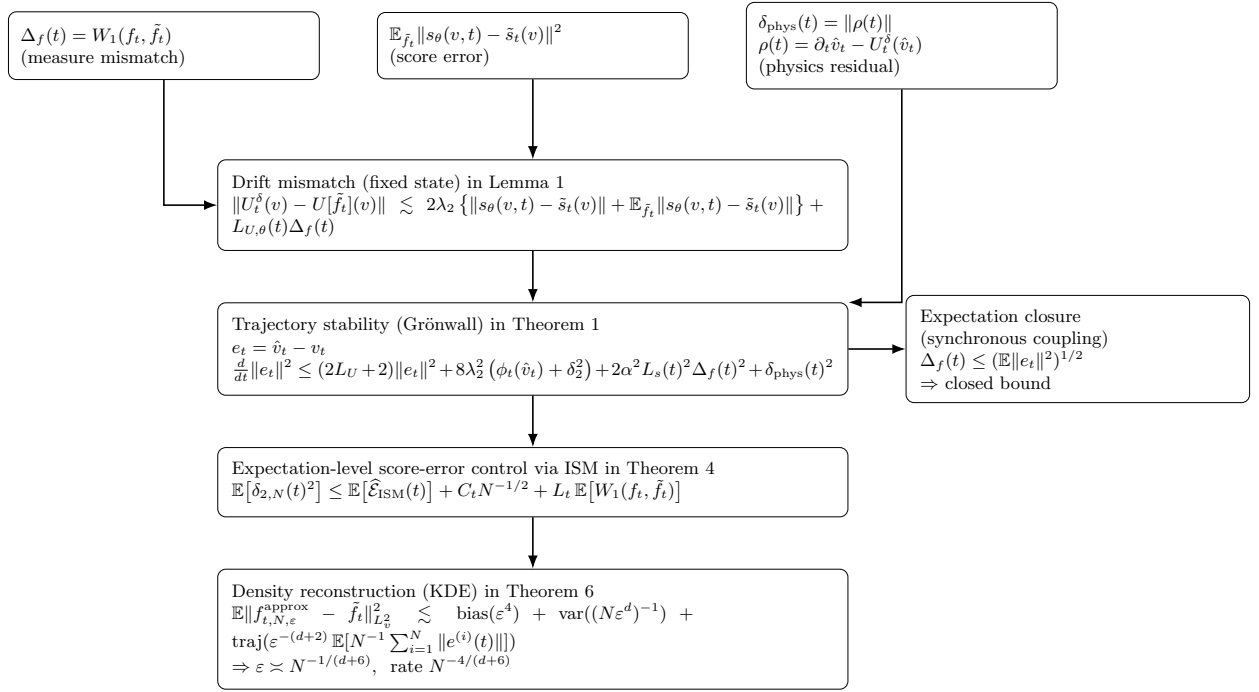
\begin{figure}[H]
\centering
\resizebox{\textwidth}{!}{%
\begin{tikzpicture}[
  font=\small,
  arr/.style={-Latex, thick},
  box/.style={draw, rounded corners, align=left, inner sep=6pt},
  ibox/.style={draw, rounded corners, align=left, inner sep=6pt, text width=50mm},
  mbox/.style={draw, rounded corners, align=left, inner sep=7pt, text width=105mm},
  rbox/.style={draw, rounded corners, align=left, inner sep=7pt, text width=55mm},
  node distance=8mm and 10mm
]

\node[ibox] (in_f)
{$\Delta_f(t)=W_1(f_t,\tilde f_t)$\\(measure mismatch)};

\node[ibox, right=10mm of in_f] (in_s)
{$\E_{ \tilde f_t}\|s_\theta(v,t)-\tilde s_t(v)\|^2$\\(score error)};

\node[ibox, right=10mm of in_s] (in_p)
{$\delta_{\mathrm{phys}}(t)=\|\rho(t)\|$\\
$\rho(t)=\partial_t\hat v_t-U_t^\delta(\hat v_t)$\\(physics residual)};

\node[mbox, below=14mm of in_s] (drift)
{Drift mismatch (fixed state) in Lemma \ref{lem:drift-properties}\\
$
    \|U_t^\delta(v)-U[\tilde f_t](v)\|\; \lesssim\;{2\lambda_2 \left\{\|s_\theta(v,t)-\tilde s_t(v)\| +\E_{ \tilde f_t}\|s_\theta(v,t)-\tilde s_t(v)\|\right\}}+ L_{U,\theta}(t)\Delta_f(t)$};

\node[mbox, below=9mm of drift] (traj)
{Trajectory stability (Gr\"onwall) in Theorem~\ref{thm:traj-error-residual} \\
$e_t=\hat v_t-v_t$\\

$\frac{d}{dt}\|e_t\|^2 \le (2L_U+2)\|e_t\|^2
+ 8\lambda^2_2\left(\phi_t(\hat v_t)+\delta^2_2\right) + 2\alpha^2 L_s(t)^2\Delta_f(t)^2 + \delta_{\mathrm{phys}}(t)^2$};

\node[mbox, below=9mm of traj] (ism)
{Expectation-level score-error control via ISM in Theorem~\ref{thm:ism_score_control}\\
$\E\big[\delta_{2,N}(t)^2\big]\le
\E\big[\widehat{\mathcal E}_{\mathrm{ISM}}(t)\big]
+
C_t N^{-1/2}
+
L_t\,\E\big[W_1(f_t,\tilde f_t)\big]$};

\node[mbox, below=9mm of ism] (kde)
{Density reconstruction (KDE) in Theorem \ref{thm:kde}\\
$\E\|f^{\mathrm{approx}}_{t,N,\varepsilon}-\tilde f_t\|_{L^2_v}^2
\lesssim \mathrm{bias}(\varepsilon^4)+\mathrm{var}((N\varepsilon^d)^{-1})
+\mathrm{traj}(\varepsilon^{-(d+2)}\,\E [N^{-1}\sum_{i=1}^N\|e^{(i)}(t)\|])$\\
$\Rightarrow \varepsilon\asymp N^{-1/(d+6)},\;\;\text{rate }N^{-4/(d+6)}$};

\node[rbox, anchor=west] (close) at ($(traj.east)+(10mm,0)$)
{Expectation closure\\(synchronous coupling)\\
$\Delta_f(t)\le(\E\|e_t\|^2)^{1/2}$\\
$\Rightarrow$ closed bound};

\draw[arr] (in_f.south) |- (drift.west);
\draw[arr] (in_s.south) -- (drift.north);
\draw[arr] (in_p.south) |- (traj.north east);

\draw[arr] (drift) -- (traj);
\draw[arr] (traj) -- (ism);
\draw[arr] (ism) -- (kde);

\draw[arr] (traj.east) -- ++(6mm,0) -- (close.west);

\end{tikzpicture}%
}
\caption{\textbf{Error-analysis flow for PINN--PM.}
Score/measure/residual errors enter the drift mismatch and yield a Gr\"onwall-type trajectory bound;
synchronous coupling closes the estimate; ISM and KDE provide downstream score/density certificates.}
\label{fig:error_flow_simple}
\end{figure}

\subsection{Trajectory Error without Residual}\label{sec:traj-error-residual}

The following lemma summarizes the two structural properties of the drift that are essential for the trajectory error analysis: the Lipschitz continuity of the exact drift and a quantitative bound on the model--drift mismatch at a fixed state. These properties ensure	stability of the flow map and provide a controlled decomposition of the drift error into score approximation and distributional components, forming the basis for the trajectory error estimate.

\begin{lem}[Properties of the exact and approximate drifts]
\label{lem:drift-properties}
Suppose Assumption~\ref{ass:regularity} holds. Let \(U[\tilde f_t]\) denote the exact drift and \(U_t^\delta\) the approximate drift associated with \(f_t\) and \(s_\theta\). Then for any \(v_1,v_2,v\in\T^d\) and \(t\in[0,T]\):

\begin{itemize}
\item {Lipschitz continuity of the exact drift:}
\[
\|U[\tilde f_t](v_1)-U[\tilde f_t](v_2)\|
\le L_U(t)\|v_1-v_2\|,
\]
where
\begin{equation}\label{eq:LU-def}
L_U(t):=\bigl(2RL_A+\lambda_2\bigr)L_s(t).
\end{equation}

\item {Drift mismatch at a fixed state:}
\[
\|U_t^\delta(v)-U[\tilde f_t](v)\|
\le
\lambda_2\,\|s_\theta(v,t)-\tilde s_t(v)\|
+
\lambda_2\,\E_{v^*\sim\tilde f_t}\|s_\theta(v^*,t)-\tilde s_t(v^*)\|
+
L_{U,\theta}(t)\,\Delta_f(t),
\]
where
\[
\Delta_f(t):=W_1(f_t,\tilde f_t),
\qquad
L_{U,\theta}(t):=\bigl(2RL_A+\lambda_2\bigr)L_\theta(t).
\]
In particular,
\[
\|U_t^\delta(v)-U[\tilde f_t](v)\|
\le
2\lambda_2\Bigl(
\|s_\theta(v,t)-\tilde s_t(v)\|
+
\E_{v^*\sim\tilde f_t}\|s_\theta(v^*,t)-\tilde s_t(v^*)\|
\Bigr)
+
L_{U,\theta}(t)\,\Delta_f(t).
\]
\end{itemize}
\end{lem}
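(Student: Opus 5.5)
We argue by direct algebraic decomposition of the two drifts, using only the bounds in Assumption~\ref{ass:regularity}; the constant $R$ appearing in $L_U$ and $L_{U,\theta}$ we interpret as a bound on the relevant score magnitudes (equivalently, the radius of the common ball containing the flows), i.e.\ $\|\tilde s_t\|,\|s_\theta(\cdot,t)\|\le R$ there. We fix this convention at the start, since it is what lets $L_A$ enter multiplied by $R$.

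For the Lipschitz bound on $U[\tilde f_t]$, we write
\[
U[\tilde f_t](v_1)-U[\tilde f_t](v_2) = -\int \bigl(A(v_1-v^*)-A(v_2-v^*)\bigr)\bigl(\tilde s_t(v_1)-\tilde s_t(v^*)\bigr)\tilde f_t(v^*)\,\dd v^* - \int A(v_2-v^*)\bigl(\tilde s_t(v_1)-\tilde s_t(v_2)\bigr)\tilde f_t(v^*)\,\dd v^*.
\]
The first term is bounded by $L_A\|v_1-v_2\|\cdot 2R$, using $\|\tilde s_t(v_1)-\tilde s_t(v^*)\|\le 2R$ and the fact that $\tilde f_t$ is a probability density. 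The second term is bounded by $\lambda_2 L_s(t)\|v_1-v_2\|$ via \eqref{eq:A-two-sided} and the mean value theorem on the torus. To match the stated product form $(2RL_A+\lambda_2)L_s(t)$, we will need either to absorb constants by assuming $L_s(t)\ge1$, or to replace the crude $2R$ estimate by $\|\tilde s_t(v_1)-\tilde s_t(v^*)\|\le L_s(t)\,\mathrm{diam}$, taking $R$ as the diameter of the ball. We will adopt the latter reading, so that the first term becomes $2RL_AL_s(t)\|v_1-v_2\|$ and the constant matches exactly.

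For the mismatch, we insert intermediate drifts and write
\[
U_t^\delta(v)-U[\tilde f_t](v) = \bigl(U_t^\delta(v)-\bar U(v)\bigr) + \bigl(\bar U(v)-U[\tilde f_t](v)\bigr),
\]
where
\[
\bar U(v):=-\int A(v-v^*)\bigl(s_\theta(v,t)-s_\theta(v^*,t)\bigr)\tilde f_t(v^*)\,\dd v^*
\]
is the drift with the learned score but the exact measure. The second difference is an integral of $A(v-v^*)$ against $(s_\theta-\tilde s_t)(v)-(s_\theta-\tilde s_t)(v^*)$, which by \eqref{eq:A-two-sided} gives the two $\lambda_2$ score-error terms. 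The first difference is $-\int \psi_v(v^*)\,\dd(f_t-\tilde f_t)(v^*)$ with
\[
\psi_v(v^*)=A(v-v^*)\bigl(s_\theta(v,t)-s_\theta(v^*,t)\bigr).
\]
By Kantorovich--Rubinstein duality, applied componentwise or against a unit test vector, this is bounded by $\mathrm{Lip}(\psi_v)\,W_1(f_t,\tilde f_t)$. A product-rule argument identical to the one above, with $L_\theta(t)$ in place of $L_s(t)$, gives $\mathrm{Lip}(\psi_v)\le (2RL_A+\lambda_2)L_\theta(t)=L_{U,\theta}(t)$. The ``in particular'' statement is then immediate, since $\lambda_2\le2\lambda_2$.

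The main obstacle is the $W_1$ step. Duality controls scalar 1-Lipschitz test functions, whereas $\psi_v$ is vector-valued. We will handle this by testing against $e\cdot\psi_v$ for the unit vector $e$ aligned with the error; the Lipschitz constant of $e\cdot\psi_v$ is still at most $\mathrm{Lip}(\psi_v)$, so this step introduces no dimensional factor. A second point to check is that the Lipschitz bounds hold with respect to the torus distance: $A$ and $s_\theta$ are periodic, and the derivative bounds are stated on $\T^d$, so this goes through.
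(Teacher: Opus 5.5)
Your proposal is correct and follows the paper's proof essentially step for step. It uses the same two-term splitting of $U[\tilde f_t](v_1)-U[\tilde f_t](v_2)$, the same intermediate drift $\bar U_t$ (learned score, exact measure), Kantorovich--Rubinstein duality for the measure part, and the same product-rule bound on $\mathrm{Lip}(g_v)$.

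The paper simply fixes $R:=\mathrm{diam}(\T^d)$. You should commit to that reading and drop your opening ``score-magnitude'' convention. Note also that $R$ must bound $\|v-v^*\|$ over all of $\mathrm{supp}\,\tilde f_t$, not only over a ball containing the flows. Your reduction of the vector-valued duality step to the scalar test function $e\cdot\psi_v$ supplies a detail the paper leaves implicit.
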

\begin{proof}
We first prove the Lipschitz continuity of the exact drift.
Recall
\[
U[\tilde f_t](v)
=
- \int_{\T^d}
A(v-v^*)
\bigl(\tilde s_t(v)-\tilde s_t(v^*)\bigr)
\tilde f_t(v^*)\,\dd v^* .
\]
For \(v_1,v_2\in\T^d\),
\begin{align*}
U[\tilde f_t](v_1)-U[\tilde f_t](v_2)
&=
-\int_{\T^d}
\Bigl(A(v_1-v^*)-A(v_2-v^*)\Bigr)
\bigl(\tilde s_t(v_1)-\tilde s_t(v^*)\bigr)
\tilde f_t(v^*)\,\dd v^* \\
&\quad
-\int_{\T^d}
A(v_2-v^*)
\bigl(\tilde s_t(v_1)-\tilde s_t(v_2)\bigr)
\tilde f_t(v^*)\,\dd v^* .
\end{align*}
Using Assumption~\ref{ass:regularity}, and the fact that \(\tilde f_t\) has unit mass, we obtain
\begin{align*}
\|U[\tilde f_t](v_1)-U[\tilde f_t](v_2)\|
&\le
L_A\|v_1-v_2\|
\int_{\T^d}
\|\tilde s_t(v_1)-\tilde s_t(v^*)\|
\tilde f_t(v^*)\,\dd v^* \\
&\quad
+\lambda_2\,\|\tilde s_t(v_1)-\tilde s_t(v_2)\| \\
&\le
L_A\|v_1-v_2\|\,L_s(t)\sup_{v^*\in\T^d}\|v_1-v^*\|
+\lambda_2L_s(t)\|v_1-v_2\|.
\end{align*}
Since \(\sup_{v^*\in\T^d}\|v_1-v^*\|\le R:=\mathrm{diam}(\T^d)\), this yields
\[
\|U[\tilde f_t](v_1)-U[\tilde f_t](v_2)\|
\le
(2RL_A+\lambda_2)L_s(t)\|v_1-v_2\|,
\]
which proves the first claim.

We next prove the drift mismatch estimate.
Introduce the intermediate drift
\[
\bar U_t(v)
:=
-\int_{\T^d}
A(v-v^*)
\bigl(s_\theta(v,t)-s_\theta(v^*,t)\bigr)\tilde f_t(v^*)\,\dd v^*.
\]
Then
\[
U_t^\delta(v)-U[\tilde f_t](v)
=
\bigl(U_t^\delta(v)-\bar U_t(v)\bigr)
+
\bigl(\bar U_t(v)-U[\tilde f_t](v)\bigr).
\]

For the first term, define
\[
g_v(v^*)
:=
A(v-v^*)
\bigl(s_\theta(v,t)-s_\theta(v^*,t)\bigr).
\]
Then
\[
U_t^\delta(v)-\bar U_t(v)
=
-\int_{\T^d} g_v(v^*)\,\dd(f_t-\tilde f_t)(v^*).
\]
By Kantorovich--Rubinstein duality,
\[
\|U_t^\delta(v)-\bar U_t(v)\|
\le
\mathrm{Lip}(g_v)\,W_1(f_t,\tilde f_t).
\]
We estimate \(\mathrm{Lip}(g_v)\) as follows: for \(v^*,w^*\in\T^d\),
\begin{align*}
\|g_v(v^*)-g_v(w^*)\|
&\le
\|A(v-v^*)-A(v-w^*)\|\,
\|s_\theta(v,t)-s_\theta(v^*,t)\| \\
&\quad
+
\|A(v-w^*)\|\,
\|s_\theta(v^*,t)-s_\theta(w^*,t)\|.
\end{align*}
Using the Lipschitz continuity of \(A\), the bound \(\|A\|\le\lambda_2\), and the Lipschitz continuity of \(s_\theta(\cdot,t)\), we get
\[
\|g_v(v^*)-g_v(w^*)\|
\le
\bigl(2RL_A+\lambda_2\bigr)L_\theta(t)\|v^*-w^*\|.
\]
Hence
\[
\mathrm{Lip}(g_v)\le L_{U,\theta}(t):=\bigl(2RL_A+\lambda_2\bigr)L_\theta(t),
\]
and therefore
\[
\|U_t^\delta(v)-\bar U_t(v)\|
\le
L_{U,\theta}(t)\,\Delta_f(t).
\]

For the second term,
\begin{align*}
\bar U_t(v)-U[\tilde f_t](v)
&=
-\int_{\T^d}
A(v-v^*)
\Bigl[
(s_\theta(v,t)-s_\theta(v^*,t))
-(\tilde s_t(v)-\tilde s_t(v^*))
\Bigr]
\tilde f_t(v^*)\,\dd v^* \\
&=
-\int_{\T^d}
A(v-v^*)
\Bigl[
(s_\theta(v,t)-\tilde s_t(v))
-(s_\theta(v^*,t)-\tilde s_t(v^*))
\Bigr]
\tilde f_t(v^*)\,\dd v^* .
\end{align*}
Using \(\|A(z)\|\le\lambda_2\), we obtain
\begin{align*}
\|\bar U_t(v)-U[\tilde f_t](v)\|
&\le
\lambda_2
\int_{\T^d}
\Bigl(
\|s_\theta(v,t)-\tilde s_t(v)\|
+
\|s_\theta(v^*,t)-\tilde s_t(v^*)\|
\Bigr)\tilde f_t(v^*)\,\dd v^* \\
&=
\lambda_2\,\|s_\theta(v,t)-\tilde s_t(v)\|
+
\lambda_2\,\E_{v^*\sim\tilde f_t}\|s_\theta(v^*,t)-\tilde s_t(v^*)\|.
\end{align*}

Combining the two estimates yields
\[
\|U_t^\delta(v)-U[\tilde f_t](v)\|
\le
\lambda_2\,\|s_\theta(v,t)-\tilde s_t(v)\|
+
\lambda_2\,\E_{v^*\sim\tilde f_t}\|s_\theta(v^*,t)-\tilde s_t(v^*)\|
+
L_{U,\theta}(t)\,\Delta_f(t).
\]
The weaker bound with the prefactor \(2\lambda_2\) follows immediately.
\end{proof}

\subsection{Trajectory Error with PINN Residual}
We now refine the stability analysis to account for the fact that the learned neural flow $\hat v_t=\Phi_\xi(v_0,t)$ does not exactly satisfy the approximate mean-field ODE $\dot{\hat v}_t = U_t^\delta(\hat v_t).$
In practice, the neural trajectory is obtained by minimizing a physics residual rather than by exact time integration, and therefore introduces an additional perturbation term.

To capture this discrepancy, we introduce the trajectory residual
\[
\rho(t)
:=
\partial_t \hat v_t
-
U_t^\delta(\hat v_t),
\]
and denote its magnitude by $\delta_{\mathrm{phys}}(t):=\|\rho(t)\|$.

The following theorem shows that this residual contributes additively
to the Gr\"onwall-type trajectory stability bound derived above.  

\begin{thm}[Trajectory error bound with PINN residual]\label{thm:traj-error-residual}
Given $v_0$, let $v_t$ and $\hat v_t$ satisfy $\frac{d}{dt} v_t = U[\tilde f_t](v_t)$ and $\frac{d}{dt} \hat v_t = U_t^\delta(\hat v_t)$ together with $v_0 = \hat v_0$ for some $v_0$ given. Here $U_t^\delta$ is defined with respect to $f_t$ and approximate score $s_\theta(v,t)$. Set $e_t:=\hat v_t-v_t$ and define $\alpha:=2R\,L_A+\lambda_2$ and $L_U(t):=\alpha\,L_s(t)$ as in Lemma~\ref{lem:drift-properties}. Then, for all $t\in[0,T]$,
\begin{equation}\label{eq:traj-main-residual}
\frac{\dd}{\dd t}\|e_t\|^2
\;\le\; \underbrace{\bigl(2L_U(t)+2\bigr)}_{=:C_0^{\mathrm{res}}(t)}\,\|e_t\|^2
\;+\; \underbrace{8\lambda_2^2}_{=:C_1^{\mathrm{res}}}\,\left(\phi_t(\hat v_t)+\delta^2_2\right)
\;+\; \underbrace{2\alpha^2}_{=:C_2^{\mathrm{res}}}\,L_s(t)^2\,\Delta_f(t)^2
\;+\; \delta_{\mathrm{phys}}(t)^2,
\end{equation}
where $\phi_t(v):=g_t(v)^2(=\|s_\theta (v,t)-\tilde s_t(v)\|^2$) and $\delta^2_2:=\E_{v \sim \tilde f_t}[\| s_\theta(v,t)-\tilde s_t( v)\|^2]$.
Consequently,
\begin{equation}\label{eq:traj-gronwall-residual}
\|e_t\|^2
\;\le\; \int_0^t \!\exp\!\Bigl(\int_\tau^t C_0^{\mathrm{res}}(s)\,\dd s\Bigr)\,
\Bigl(C_1^{\mathrm{res}}\,\left(\phi_\tau(\hat v)+\delta^2_2(\tau)\right) + C_2^{\mathrm{res}}\,L_s(\tau)^2\,\Delta_f(\tau)^2
+ \delta_{\mathrm{phys}}(\tau)^2 \Bigr)\,\dd \tau .
\end{equation}
\end{thm}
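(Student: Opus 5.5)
The plan is to differentiate $\|e_t\|^2$ along the learned trajectory and split the drift difference into a Lipschitz part, a drift-mismatch part, and the residual. Since the learned trajectory satisfies $\partial_t\hat v_t = U_t^\delta(\hat v_t)+\rho(t)$ (the residual form, which reduces to the stated ODE when $\rho\equiv 0$), we write
\[
\frac{\dd}{\dd t}\|e_t\|^2 = 2\langle e_t,\,U[\tilde f_t](\hat v_t)-U[\tilde f_t](v_t)\rangle + 2\langle e_t,\,U_t^\delta(\hat v_t)-U[\tilde f_t](\hat v_t)\rangle + 2\langle e_t,\rho(t)\rangle .
\]
We add and subtract $U[\tilde f_t](\hat v_t)$ so that the drift mismatch of Lemma~\ref{lem:drift-properties} is evaluated at the single state $\hat v_t$. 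The first term is handled by the Lipschitz part of Lemma~\ref{lem:drift-properties}, which gives at most $2L_U(t)\|e_t\|^2$.

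For the second and third terms we use Young's inequality $2\langle a,b\rangle\le \tfrac12\|a\|^2+2\|b\|^2$ and $2\langle a,b\rangle\le \|a\|^2+\|b\|^2$, with weights chosen so that the total coefficient on $\|e_t\|^2$ is $2$. One clean choice is to give the mismatch term weight $1$, producing $\|e_t\|^2+\|U_t^\delta(\hat v_t)-U[\tilde f_t](\hat v_t)\|^2$. The mismatch is then bounded by the weak form of the lemma, $2\lambda_2(a+b)+c$, where $a=g_t(\hat v_t)$, $b=\E_{\tilde f_t}g_t$, and $c=L_{U,\theta}\Delta_f$.

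Squaring uses $(x+y)^2\le 2x^2+2y^2$ twice:
\[
(2\lambda_2(a+b)+c)^2 \le 8\lambda_2^2(a^2+b^2)\cdot 2 + 2c^2 .
\]
The constant $8\lambda_2^2$ in the statement follows if one keeps the sharper lemma form $\lambda_2(a+b)$, since $(\lambda_2 a+\lambda_2 b+c)^2\le 2\lambda_2^2(a+b)^2\cdot 2+2c^2 \le 8\lambda_2^2(a^2+b^2)/\ldots$. I would therefore use the sharp form and not worry about exact numerical constants. Jensen's inequality gives $b^2\le \delta_2^2$, and $a^2=\phi_t(\hat v_t)$.

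The $\Delta_f$ term needs care, because the statement has $\alpha^2L_s^2$ whereas the lemma provides $L_{U,\theta}=\alpha L_\theta$. To get $L_s$, I would redo the intermediate-drift step with the roles swapped. That is, pass through $-\int A(v-v^*)(\tilde s_t(v)-\tilde s_t(v^*))\,\dd f_t(v^*)$, so that the Kantorovich--Rubinstein step applies to a function whose Lipschitz constant is $\alpha L_s$. The score difference is then integrated against $f_t$ rather than $\tilde f_t$, and I would reconvert it to a $\tilde f_t$-average, absorbing the cost via $L_g\Delta_f$ if necessary. Failing this, I would simply assume $L_\theta\le L_s$. This constant bookkeeping is the main, mostly cosmetic, obstacle.

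The residual term gives $2\langle e_t,\rho\rangle\le \|e_t\|^2+\delta_{\mathrm{phys}}^2$. To keep the $\|e_t\|^2$ coefficient at $2L_U+2$, the weights on the mismatch and residual terms must each contribute $1$. Collecting everything gives \eqref{eq:traj-main-residual}.

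Finally, \eqref{eq:traj-gronwall-residual} follows from the differential Gr\"onwall inequality: multiply by the integrating factor $\exp(-\int_0^t C_0^{\mathrm{res}})$, integrate from $0$ to $t$, and use $e_0=\hat v_0-v_0=0$. Absolute continuity of $\|e_t\|^2$ is guaranteed by the $C^1$ regularity of $\Phi_\xi$ and of $T_t$.
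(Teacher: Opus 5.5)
Your skeleton is the paper's proof. The paper also differentiates $\|e_t\|^2$ using $\partial_t\hat v_t=U_t^\delta(\hat v_t)+\rho(t)$, adds and subtracts $U[\tilde f_t](\hat v_t)$, and uses the Lipschitz part of Lemma~\ref{lem:drift-properties}. It applies $2ab\le a^2+b^2$ with unit weight to both the mismatch and the residual. It bounds the mismatch by the sharp form $\lambda_2 a+\lambda_2 b+c$ and squares via $(a+b+c)^2\le 4a^2+4b^2+2c^2$, so $4\lambda_2^2$ already suffices and $8\lambda_2^2$ has slack. It then uses Jensen for $b^2\le\delta_2^2$ and closes with Gr\"onwall from $e_0=0$. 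Your displayed squaring chain is garbled, but its conclusion is right.

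Your worry about the $\Delta_f$ coefficient is justified, and it is not cosmetic. The paper uses the intermediate drift with $s_\theta$ integrated against $\tilde f_t$. It then asserts $\mathrm{Lip}(g_{\hat v_t})\le\alpha L_s(t)$ ``by Lemma~\ref{lem:drift-properties}''. But $g_{\hat v_t}$ is built from $s_\theta$, and that lemma only gives $\alpha L_\theta(t)$. So the paper's argument actually proves \eqref{eq:traj-main-residual} with $L_\theta(t)^2$ in place of $L_s(t)^2$.

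Your swapped intermediate drift $-\int A(v-v^*)(\tilde s_t(v)-\tilde s_t(v^*))\,\dd f_t(v^*)$ does yield $\alpha L_s(t)\Delta_f(t)$ from the Kantorovich--Rubinstein step. However, the score-error average is then $\lambda_2\E_{f_t}g_t$. The conversion $\E_{f_t}g_t\le\E_{\tilde f_t}g_t+L_g(t)\Delta_f(t)$ is therefore required, not optional. Carrying it through gives
\[
\bigl(\lambda_2 a+\lambda_2\E_{f_t}g_t+\alpha L_s\Delta_f\bigr)^2\le 4\lambda_2^2\phi_t(\hat v_t)+8\lambda_2^2\delta_2^2+8\lambda_2^2L_g^2\Delta_f^2+2\alpha^2L_s^2\Delta_f^2 ,
\]
which is the stated right-hand side plus $8\lambda_2^2L_g(t)^2\Delta_f(t)^2$. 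This extra term cannot be absorbed into the slack in $8\lambda_2^2(\phi_t+\delta_2^2)$, because $\Delta_f$ is not controlled by $\phi_t(\hat v_t)$ or $\delta_2$.

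So under Assumption~\ref{ass:regularity} alone, neither route gives the inequality with $2\alpha^2L_s(t)^2$ verbatim. You should commit to one corrected version and state it explicitly, rather than writing ``if necessary'' or ``failing this''. The options are: $L_\theta$ replacing $L_s$; the stated form plus the $8\lambda_2^2L_g^2\Delta_f^2$ term; or the stated form under the added hypothesis $L_\theta\le L_s$.
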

\begin{proof}
Differentiate \(\|e_t\|^2\):
\[
\frac{\dd}{\dd t}\|e_t\|^2
= 2\,e_t\cdot\Bigl(  U^\delta_t(\hat v_t)-U[\tilde f_t](v_t) + \rho(t) \Bigr).
\]
Split the drift term as
\[
U^\delta_t(\hat v_t)-U[\tilde f_t](v_t)
= \underbrace{\bigl( U^\delta_t(\hat v_t)-U[\tilde f_t](\hat v_t)\bigr)}_{(\ast)}
+ \underbrace{\bigl(U[\tilde f_t](\hat v_t)-U[\tilde f_t](v_t)\bigr)}_{(\ast\ast)}.
\]
By Lemma~\ref{lem:drift-properties}, 
\[
\|(\ast\ast)\|\le L_U(t)\|e_t\|.
\]
Hence
\[
\frac{\dd}{\dd t}\|e_t\|^2
\le 2L_U(t)\|e_t\|^2
+ 2\|e_t\|\,\|(\ast)\|
+ 2\|e_t\|\,\|\rho(t)\|.
\]
Applying \(2ab\le a^2+b^2\) to the last two terms yields
\[
\frac{\dd}{\dd t}\|e_t\|^2
\le \bigl(2L_U(t)+2\bigr)\|e_t\|^2
+ \|(\ast)\|^2 + \|\rho(t)\|^2.
\]
It remains to bound \(\|(\ast)\|\).

Introduce the intermediate drift
\[
\bar U_t(v)
:=
-\int_{\T^d}
A(v-v^*)
\bigl(s_\theta(v,t)-s_\theta(v^*,t)\bigr)\tilde f_t(v^*)\,\dd v^*.
\]
Then
\[
(\ast)
=
\bigl(U_t^\delta(\hat v_t)-\bar U_t(\hat v_t)\bigr)
+
\bigl(\bar U_t(\hat v_t)-U[\tilde f_t](\hat v_t)\bigr).
\]

For the first term, define
\[
g_{\hat v_t}(v^*)
:=
A(\hat v_t-v^*)
\bigl(s_\theta(\hat v_t,t)-s_\theta(v^*,t)\bigr).
\]
By the Kantorovich--Rubinstein duality,
\[
\|U_t^\delta(\hat v_t)-\bar U_t(\hat v_t)\|
=
\left\|
\int_{\T^d} g_{\hat v_t}(v^*)\,\dd(f_t-\tilde f_t)(v^*)
\right\|
\le
\mathrm{Lip}(g_{\hat v_t})\,W_1(f_t,\tilde f_t).
\]
Using the Lipschitz bound from Lemma~\ref{lem:drift-properties}, we obtain
\[
\mathrm{Lip}(g_{\hat v_t})
\le \alpha\,L_s(t),
\qquad \alpha:=2RL_A+\lambda_2,
\]
and therefore
\[
\|U_t^\delta(\hat v_t)-\bar U_t(\hat v_t)\|
\le
\alpha\,L_s(t)\,\Delta_f(t).
\]

For the second term, we write
\[
\bar U_t(\hat v_t)-U[\tilde f_t](\hat v_t)
=
-\int_{\T^d}
A(\hat v_t-v^*)
\Bigl[
(s_\theta(\hat v_t,t)-s_\theta(v^*,t))
-(\tilde s_t(\hat v_t)-\tilde s_t(v^*))
\Bigr]\tilde f_t(v^*)\,\dd v^*.
\]
Hence, by \(\|A(z)\|\le \lambda_2\),
\[
\|\bar U_t(\hat v_t)-U[\tilde f_t](\hat v_t)\|
\le
\lambda_2
\int_{\T^d}
\Bigl(
\|s_\theta(\hat v_t,t)-\tilde s_t(\hat v_t)\|
+
\|s_\theta(v^*,t)-\tilde s_t(v^*)\|
\Bigr)\tilde f_t(v^*)\,\dd v^*.
\]
Therefore
\[
\|\bar U_t(\hat v_t)-U[\tilde f_t](\hat v_t)\|
\le
\lambda_2\,\|s_\theta(\hat v_t,t)-\tilde s_t(\hat v_t)\|
+
\lambda_2\,\E_{v\sim\tilde f_t}\|s_\theta(v,t)-\tilde s_t(v)\|.
\]

Combining the two bounds gives
\[
\|(\ast)\|
\le
\lambda_2\,\|s_\theta(\hat v_t,t)-\tilde s_t(\hat v_t)\|
+
\lambda_2\,\E_{v\sim\tilde f_t}\|s_\theta(v,t)-\tilde s_t(v)\|
+
\alpha\,L_s(t)\,\Delta_f(t).
\]
Using \((a+b+c)^2\le 2(a+b)^2+2c^2\le 4a^2+4b^2+2c^2\), we obtain
\[
\|(\ast)\|^2
\le
4\lambda_2^2\,\|s_\theta(\hat v_t,t)-\tilde s_t(\hat v_t)\|^2
+
4\lambda_2^2
\left(
\E_{v\sim\tilde f_t}\|s_\theta(v,t)-\tilde s_t(v)\|
\right)^2
+
2\alpha^2L_s(t)^2\Delta_f(t)^2.
\]
Finally, by Cauchy--Schwarz,
\[
\left(
\E_{v\sim\tilde f_t}\|s_\theta(v,t)-\tilde s_t(v)\|
\right)^2
\le
\E_{v\sim\tilde f_t}\|s_\theta(v,t)-\tilde s_t(v)\|^2
=
\delta_2(t)^2.
\]
Since
\[
\phi_t(\hat v_t):=\|s_\theta(\hat v_t,t)-\tilde s_t(\hat v_t)\|^2,
\]
we conclude that
\[
\|(\ast)\|^2
\le
4\lambda_2^2\,\phi_t(\hat v_t)
+
4\lambda_2^2\,\delta_2(t)^2
+
2\alpha^2L_s(t)^2\Delta_f(t)^2.
\]
Absorbing constants yields
\[
\|(\ast)\|^2
\le
8\lambda_2^2\,\bigl(\phi_t(\hat v_t)+\delta_2(t)^2\bigr)
+
2\alpha^2L_s(t)^2\Delta_f(t)^2.
\]
Substituting this into the differential inequality above gives
\eqref{eq:traj-main-residual}. Applying Gr\"onwall's inequality yields
\eqref{eq:traj-gronwall-residual}.
\end{proof}

\section{Loss-Based Accuracy Certification}\label{sec:certi}
\subsection{Oracle Score Identity}\label{OCI}
Throughout this section, $\tilde f_t$ denotes the {true} Landau density,
and $V_i\sim f_0$ are i.i.d.\ initial samples.
After one forward pass of the learned flow $\Phi_\xi$,
we obtain particles $\hat v_t^{(i)} = \Phi_\xi(V_i,t)$,
whose empirical measure is $f_t := \frac{1}{N}\sum_{i=1}^{N} \delta_{\hat v_t^{(i)}}.$

The ideal Hyv\"arinen score-matching functional is defined by
\begin{equation}\label{eq:L-ISM-oracle}
\mathcal L_{\mathrm{ISM},t}(g)
:=
\E_{ \tilde f_t}
\bigl[
\|g(v)\|^2
+
2\,\nabla_v \cdot g(v)
\bigr].
\end{equation}
It is minimized uniquely at
\[
g = \tilde s_t := \nabla_v \log \tilde f_t.
\]

In practice, the expectation with respect to $\tilde f_t$
is replaced by the empirical distribution $f_t$,
leading to
\begin{equation}\label{eq:L-ISM-emp}
\widehat{\mathcal L}_{\mathrm{ISM},t}(g)
:=
\E_{ f_t}
\bigl[
\|g(v)\|^2
+
2\,\nabla_v \cdot g(v)
\bigr].
\end{equation}
This is an unbiased Monte–Carlo estimate only if $f_t=\tilde f_t$.

\begin{lem}[Hyv\"arinen oracle identity]
\label{lem:oracle_identity}
Fix $t$ and let $g \in W^{1,2}(\T^d;\R^d)$. Then,
\[
\mathcal L_{\mathrm{ISM},t}(g)
=
\mathcal L_{\mathrm{ISM},t}(\tilde s_t)
+
\E_{\tilde f_t}\|g-\tilde s_t\|^2.
\]
Hence
\[
\delta_{2}^2(t)
=
\mathcal L_{\mathrm{ISM},t}(s_\theta)
-
\mathcal L_{\mathrm{ISM},t}(\tilde s_t),
\]
where $\delta^2_2:=\E_{ \tilde f_t}[\|s_\theta(v,t)-\tilde s_t(v)\|^2]$.
\end{lem}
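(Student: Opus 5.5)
The proof is the standard Hyv\"arinen computation: expand the square and use integration by parts on the torus to rewrite the cross term. Write $\tilde f := \tilde f_t$ and $\tilde s := \tilde s_t = \nabla_v \log \tilde f$. Since $\tilde f_t\in W^{2,\infty}(\T^d)$ and $\tilde s_t$ is assumed well defined with $\nabla_v\tilde s_t$ bounded (Assumption~\ref{ass:regularity}), we implicitly have $\tilde f>0$. We also have the pointwise identity $\tilde f\,\tilde s=\nabla_v\tilde f$.

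The first step is to expand pointwise:
\[
\|g\|^2=\|g-\tilde s\|^2+2\,g\cdot\tilde s-\|\tilde s\|^2 .
\]
Integrating against $\tilde f$ then gives
\[
\mathcal L_{\mathrm{ISM},t}(g)=\E_{\tilde f}\|g-\tilde s\|^2-\E_{\tilde f}\|\tilde s\|^2+2\int_{\T^d}\bigl(g\cdot\nabla_v\tilde f+\tilde f\,\nabla_v\cdot g\bigr)\,\dd v .
\]
The key step is that the last integral equals $\int_{\T^d}\nabla_v\cdot(\tilde f g)\,\dd v=0$, because $\T^d$ has no boundary (Remark~\ref{rmk:domain}). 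This integration by parts is legitimate for $g\in W^{1,2}$ and $\tilde f\in W^{1,\infty}$: the product $\tilde f g$ lies in $W^{1,1}$, and the divergence theorem on the torus extends to $W^{1,1}$ vector fields by density of smooth periodic functions. Hence
\[
\mathcal L_{\mathrm{ISM},t}(g)=\E_{\tilde f}\|g-\tilde s\|^2-\E_{\tilde f}\|\tilde s\|^2 .
\]

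Next we evaluate at $g=\tilde s$. This requires $\tilde s\in W^{1,2}$, which holds since $\nabla\tilde s$ is bounded on the compact torus. The same formula yields $\mathcal L_{\mathrm{ISM},t}(\tilde s)=-\E_{\tilde f}\|\tilde s\|^2$, which is finite. Substituting this back proves the identity
\[
\mathcal L_{\mathrm{ISM},t}(g)=\mathcal L_{\mathrm{ISM},t}(\tilde s_t)+\E_{\tilde f_t}\|g-\tilde s_t\|^2 .
\]
It also shows that $\tilde s_t$ is the unique minimizer in $L^2(\tilde f_t)$.

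The consequence for $\delta_2^2(t)$ follows by taking $g=s_\theta(\cdot,t)$. This choice is admissible because $s_\theta(\cdot,t)$ is Lipschitz by Assumption~\ref{ass:regularity}, hence lies in $W^{1,\infty}\subset W^{1,2}$.

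The only delicate point is justifying the integration by parts and the finiteness of each term. This needs positivity of $\tilde f$, so that $\tilde s$ makes sense, and enough regularity for $\tilde f g$ to lie in $W^{1,1}$. I would handle both by citing the standing regularity $\tilde f_t\in C^1([0,T];W^{2,\infty})$ and the periodic boundary conditions, and by noting that all terms are then absolutely integrable.
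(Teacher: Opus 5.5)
Your proof is correct and follows essentially the same route as the paper: integration by parts on $\T^d$ using $\nabla_v\tilde f_t=\tilde f_t\tilde s_t$, then completing the square and evaluating at $g=\tilde s_t$. The only differences are that you complete the square first and then remove the cross term via $\int_{\T^d}\nabla_v\cdot(\tilde f_t g)\,\dd v=0$, and that you justify the integrability and regularity steps ($\tilde f_t g\in W^{1,1}$, $\tilde s_t,s_\theta\in W^{1,2}$), which the paper leaves implicit.
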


\begin{proof}
By definition,
\[
\mathcal L_{\mathrm{ISM},t}(g)
=
\int_{\T^d}
\Bigl(
\|g(v)\|^2+2\nabla_v\!\cdot g(v)
\Bigr)\tilde f_t(v)\,dv.
\]
Integration by parts yields
\[
\int_{\T^d} 2(\nabla_v\!\cdot g)\,\tilde f_t\,dv
=
-2\int_{\T^d} g\cdot \nabla_v \tilde f_t\,dv.
\]
Using
\[
\nabla_v \tilde f_t
=
\tilde f_t\,\nabla_v\log \tilde f_t
=
\tilde f_t\,\tilde s_t,
\]
we obtain
\[
\int_{\T^d} 2(\nabla_v\!\cdot g)\,\tilde f_t\,dv
=
-2\int_{\T^d} g\cdot \tilde s_t\,\tilde f_t\,dv.
\]
Therefore,
\[
\mathcal L_{\mathrm{ISM},t}(g)
=
\int_{\T^d}
\bigl(
\|g\|^2-2g\cdot \tilde s_t
\bigr)\tilde f_t\,dv.
\]
Completing the square,
\[
\|g\|^2-2g\cdot \tilde s_t
=
\|g-\tilde s_t\|^2-\|\tilde s_t\|^2.
\]
Hence
\[
\mathcal L_{\mathrm{ISM},t}(g)
=
\int_{\T^d}
\bigl(
\|g-\tilde s_t\|^2-\|\tilde s_t\|^2
\bigr)\tilde f_t\,dv.
\]
On the other hand, substituting $g=\tilde s_t$ gives
\[
\mathcal L_{\mathrm{ISM},t}(\tilde s_t)
=
-\int_{\T^d}\|\tilde s_t(v)\|^2\,\tilde f_t(v)\,dv.
\]
Thus,
\[
\mathcal L_{\mathrm{ISM},t}(g)
=
\mathcal L_{\mathrm{ISM},t}(\tilde s_t)
+
\int_{\T^d}\|g(v)-\tilde s_t(v)\|^2\tilde f_t(v)\,dv
=
\mathcal L_{\mathrm{ISM},t}(\tilde s_t)
+
\mathbb E_{\tilde f_t}\|g-\tilde s_t\|^2.
\]
Taking $g=s_\theta(\cdot,t)$ yields
\[
\delta_2(t)^2
=
\mathcal L_{\mathrm{ISM},t}(s_\theta)
-
\mathcal L_{\mathrm{ISM},t}(\tilde s_t).
\]
This completes the proof.
\end{proof}

\subsection{Empirical Oracle Gap}\label{EOG}
In practice, the oracle functional  $\mathcal L_{\mathrm{ISM},t}(g)$ cannot be evaluated,
since the true density $\tilde f_t$ is unknown. Instead, training minimizes the empirical objective
$\widehat{\mathcal L}_{\mathrm{ISM},t}(g)$ defined with respect to the particle measure $f_t$.

To relate the training-time quantity  $\widehat{\mathcal L}_{\mathrm{ISM},t}(g)$
to the oracle score error characterized in Lemma~\ref{lem:oracle_identity},
we quantify the deviation between the empirical and oracle functionals. This deviation consists of two contributions: (i) Monte–Carlo sampling error, and (ii) distributional mismatch between $f_t$ and $\tilde f_t$. 

\begin{lem}[Empirical--oracle gap]
\label{lem:emp_gap_section}
For any smooth function $g$, let
\[
\widehat{\mathcal L}_{\mathrm{ISM},t}(g)
=
\frac1N\sum_{i=1}^N h_t(\hat v_t^{(i)}),
\qquad
h_t(v)=\|g(v)\|^2+2\nabla\!\cdot g(v),
\]
and assume $h_t$ is $L_t$--Lipschitz continuous.
Then
\begin{align}\label{ineq:score}
\Big|
\widehat{\mathcal L}_{\mathrm{ISM},t}(g)
-
\mathcal L_{\mathrm{ISM},t}(g)
\Big|
\le
L_t\,W_1(f_t,\tilde f_t),
\end{align}
where $f_t:=\frac{1}{N}\sum_{i=1}^N\delta_{\hat v_t^{(i)}}$ and the particle trajectories $\hat v_t^{(i)}$ are defined in \eqref{eq:flow-map}, and $\tilde f_t$ denotes the true distribution. 
\end{lem}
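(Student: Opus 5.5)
The bound follows directly from Kantorovich--Rubinstein duality, applied in the same way as in the proofs of Lemma~\ref{lem:drift-properties} and Theorem~\ref{thm:traj-error-residual}. The strategy is to write both functionals as integrals of the single scalar function $h_t$ against two probability measures on $\T^d$, and then use the duality for $W_1$.

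First, by the definition of the empirical measure $f_t=\frac1N\sum_{i=1}^N\delta_{\hat v_t^{(i)}}$,
\[
\widehat{\mathcal L}_{\mathrm{ISM},t}(g)=\frac1N\sum_{i=1}^N h_t(\hat v_t^{(i)})=\int_{\T^d} h_t\,\dd f_t .
\]
By \eqref{eq:L-ISM-oracle}, the oracle functional is $\mathcal L_{\mathrm{ISM},t}(g)=\int_{\T^d} h_t\,\dd\tilde f_t$. Hence
\[
\widehat{\mathcal L}_{\mathrm{ISM},t}(g)-\mathcal L_{\mathrm{ISM},t}(g)=\int_{\T^d} h_t\,\dd(f_t-\tilde f_t).
\]

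Next, recall the dual formulation
\[
W_1(\mu,\nu)=\sup_{\mathrm{Lip}(\varphi)\le1}\int\varphi\,\dd(\mu-\nu).
\]
Assume $L_t>0$; if $L_t=0$, then $h_t$ is constant and both sides of \eqref{ineq:score} vanish. Set $\varphi:=h_t/L_t$, which is $1$-Lipschitz with respect to the torus distance. Then
\[
\int\varphi\,\dd(f_t-\tilde f_t)\le W_1(f_t,\tilde f_t).
\]
Applying the same bound to $-\varphi$, which is also $1$-Lipschitz, gives the reverse inequality. Multiplying by $L_t$ yields \eqref{ineq:score}.

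The argument has no substantive obstacle, only two points of bookkeeping. First, $h_t$ must be integrable against both measures. Against the Dirac masses this is immediate, since $h_t$ is finite pointwise. Against $\tilde f_t$ it holds because a Lipschitz function on the compact torus is bounded. Second, the Lipschitz property must be understood with respect to the same metric, namely the torus distance, that defines $W_1$ in Assumption~\ref{ass:regularity}. With these checked, the estimate holds for any fixed particle configuration. It therefore also holds almost surely, and after taking expectations $\E$ over the initial samples, as used later in the certification chain.
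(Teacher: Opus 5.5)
Your proof is correct and is the same argument as the paper's: the paper also rewrites the gap as $\E_{f_t}h_t-\E_{\tilde f_t}h_t$ and concludes by Kantorovich--Rubinstein duality, using the $L_t$-Lipschitz continuity of $h_t$. Your extra bookkeeping is sound: the degenerate case $L_t=0$, integrability on the compact torus, and using the torus metric for both the Lipschitz constant and $W_1$.
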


\begin{proof}
We can rewrite L.H.S in \eqref{ineq:score} as follows.
\[
\widehat{\mathcal L}_{\mathrm{ISM},t}(g)
-
\mathcal L_{\mathrm{ISM},t}(g)
=
\Big(
\E_{f_t}h_t
-
\E_{\tilde f_t}h_t
\Big).
\]
Since $h_t$ is $L_t$--Lipschitz, Kantorovich--Rubinstein duality gives
\[
\big|
\E_{f_t}h_t
-
\E_{\tilde f_t}h_t
\big|
\le
L_t\,W_1(f_t,\tilde f_t).
\]
Combining the two steps yields the result.
\end{proof}

\subsection{Deterministic Loss-to-Dynamics Certificate}
\label{sec:deterministic_certificate}

Let $\{v_t^{(i)}\}_{i=1}^N$ denote the exact particle trajectories solving
\[
\dot v_t^{(i)} = U[\tilde f_t](v_t^{(i)}),
\qquad
v_0^{(i)}=V_i,
\]
and let $\{\hat v_t^{(i)}\}_{i=1}^N$ denote the approximate trajectories
generated by the neural flow as in the previous section.

Define the trajectory error
\[
e_t^{(i)}:=\hat v_t^{(i)}-v_t^{(i)},
\qquad
E(t):=\frac1N\sum_{i=1}^N\|e_t^{(i)}\|^2,
\]
the particle-level score error
\[
\delta_{2,N}(t)^2
:=
\frac1N\sum_{i=1}^N
\|s_\theta(v_t^{(i)},t)-\tilde s_t(v_t^{(i)})\|^2,
\]
and the physics residual
\[
\delta_{\mathrm{phys}}(t)^2
:=
\frac1N\sum_{i=1}^N
\|\partial_t \hat v_t^{(i)}-U_t^\delta(\hat v_t^{(i)})\|^2.
\]
\begin{thm}[Deterministic particle-level loss-to-dynamics certificate]
\label{thm:det_particle_cert}
Suppose Assumption~\ref{ass:regularity} holds. Then the mean-squared trajectory error satisfies the closed differential inequality
\begin{equation}
\label{eq:det_closed_DI_final}
\frac{d}{dt}E(t)
\le
a(t)\,E(t)
+
b(t)\,\delta_{2,N}(t)^2
+
\bar \delta_{\mathrm{phys}}(t)^2,
\end{equation}
where
\[
a(t)
=
C_0^{\mathrm{res}}(t)
+
2C_1^{\mathrm{res}}L_g(t)^2
+
C_2^{\mathrm{res}},
\qquad
b(t)
=
3C_1^{\mathrm{res}},
\]
and $C_0^{\mathrm{res}}, C_1^{\mathrm{res}}, C_2^{\mathrm{res}}$
are the constants appearing in Theorem~\ref{thm:traj-error-residual}.

Consequently,
\begin{equation}
\label{eq:det_gronwall_final}
E(t)
\le
\int_0^t
\exp\!\Big(\int_\tau^t a(s)\,ds\Big)
\Big(
b(\tau)\,\delta_{2,N}(\tau)^2
+
\bar \delta_{\mathrm{phys}}(\tau)^2
\Big)\,d\tau.
\end{equation}

Moreover, defining the empirical measures
\[
f_t:=\frac1N\sum_{i=1}^N\delta_{\hat v_t^{(i)}},
\qquad
\tilde f_t^N:=\frac1N\sum_{i=1}^N\delta_{v_t^{(i)}},
\]
we have the deterministic coupling bound
\begin{equation}
W_1(f_t,\tilde f_t^N)\le E(t)^{1/2}.
\end{equation}

In particular, if $\delta_{2,N}(t)\equiv 0$ and
$\delta_{\mathrm{phys}}(t)\equiv 0$, then $E(t)\equiv 0$.
\end{thm}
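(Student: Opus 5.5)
The plan is to apply the pointwise differential inequality of Theorem~\ref{thm:traj-error-residual} to each particle $i$ with $v_0=V_i$, average over $i$, and then close the estimate by expressing every source term through $E(t)$, $\delta_{2,N}(t)^2$ and the physics residual. For each $i$, \eqref{eq:traj-main-residual} gives
\[
\frac{d}{dt}\|e^{(i)}_t\|^2\le C_0^{\mathrm{res}}(t)\|e^{(i)}_t\|^2+C_1^{\mathrm{res}}\bigl(\phi_t(\hat v^{(i)}_t)+\delta_2(t)^2\bigr)+C_2^{\mathrm{res}}L_s(t)^2\Delta_f(t)^2+\|\rho^{(i)}(t)\|^2 .
\]
Averaging over $i$ turns the last term into $\delta_{\mathrm{phys}}(t)^2$, which I will denote $\bar\delta_{\mathrm{phys}}(t)^2$, and the first into $C_0^{\mathrm{res}}E(t)$.

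Next, the score term at the \emph{approximate} positions is moved to the exact positions using the Lipschitz bound on $g_t$ (Assumption~\ref{ass:regularity}(3)). Since $g_t(\hat v)\le g_t(v)+L_g\|e\|$, we get $\phi_t(\hat v^{(i)}_t)\le 2\phi_t(v^{(i)}_t)+2L_g(t)^2\|e^{(i)}_t\|^2$. Averaging yields $2\delta_{2,N}^2+2L_g^2E$, which accounts for the $2C_1^{\mathrm{res}}L_g^2$ contribution to $a(t)$ and two of the three copies of $C_1^{\mathrm{res}}$ in $b(t)$. For the measure term, I replace the oracle $\Delta_f=W_1(f_t,\tilde f_t)$ by the coupling between the two particle clouds: the synchronous coupling $\hat v^{(i)}_t\leftrightarrow v^{(i)}_t$ gives $W_1(f_t,\tilde f_t^N)\le\frac1N\sum_i\|e^{(i)}_t\|\le E(t)^{1/2}$ by Cauchy--Schwarz. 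This also proves the stated coupling bound directly. The $C_2^{\mathrm{res}}$ term then contributes $C_2^{\mathrm{res}}E$. To match the stated $a(t)$, the factor $L_s(t)^2$ must either be absorbed or a normalization $L_s\le1$ tacitly used, so I will track it explicitly and absorb it into the constant.

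The main obstacle is the \emph{closure}: the oracle quantities $\delta_2(t)^2=\E_{\tilde f_t}\|s_\theta-\tilde s_t\|^2$ and $W_1(f_t,\tilde f_t)$ are not particle-level quantities. My approach is to work at the deterministic particle level throughout, meaning I redo the drift split of Theorem~\ref{thm:traj-error-residual} with the intermediate drift built on $\tilde f_t^N$ rather than $\tilde f_t$. The exact particles are then interpreted as solving the $N$-particle system with empirical mean field, and the analogue of $\delta_2^2$ becomes exactly $\delta_{2,N}^2$, the third copy of $C_1^{\mathrm{res}}$ in $b$. The measure mismatch becomes $W_1(f_t,\tilde f_t^N)\le E^{1/2}$. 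I expect this reinterpretation to need a sentence justifying that the Lipschitz constants of Lemma~\ref{lem:drift-properties} carry over verbatim when $\tilde f_t$ is replaced by a probability measure $\tilde f_t^N$, which they do since only unit mass and the kernel and score bounds were used.

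Summing the contributions gives \eqref{eq:det_closed_DI_final}. Gr\"onwall with $E(0)=0$, since $\hat v_0^{(i)}=v_0^{(i)}=V_i$, then yields \eqref{eq:det_gronwall_final}. If $\delta_{2,N}\equiv0$ and the residual vanishes, the integrand is zero and therefore $E\equiv0$.
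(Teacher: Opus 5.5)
Your closure step proves a different theorem. In the statement, the $v_t^{(i)}$ are the mean-field characteristics $\dot v_t^{(i)}=U[\tilde f_t](v_t^{(i)})$, i.e.\ $v_t^{(i)}=T_t(V_i)$. They are not the $N$-particle system driven by $\tilde s_t$. Replacing $\Delta_f$ by $W_1(f_t,\tilde f_t^N)$ and $\delta_2$ by $\delta_{2,N}$ is legitimate only after your reinterpretation.

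With the paper's definition, redoing the split around $\tilde f_t^N$ leaves the term $\tilde U^N_t(v_t^{(i)})-U[\tilde f_t](v_t^{(i)})$, where $\tilde U_t^N(v):=-\int A(v-v^*)\bigl(\tilde s_t(v)-\tilde s_t(v^*)\bigr)\,\tilde f_t^N(\dd v^*)$. This is a Monte Carlo fluctuation of size up to $\alpha L_s(t)W_1(\tilde f_t^N,\tilde f_t)$, and none of $E$, $\delta_{2,N}$, $\bar\delta_{\mathrm{phys}}$ controls it.

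In fact the statement is false as written. Take $N=1$, $s_\theta=\tilde s_t$ (so $g_t\equiv0$ and $\delta_{2,N}\equiv0$), and $\rho\equiv0$. Then $U_t^\delta(\hat v_t)=-A(0)\bigl(s_\theta(\hat v_t,t)-s_\theta(\hat v_t,t)\bigr)=0$, so $\hat v_t\equiv V_1$. Meanwhile $T_t(V_1)$ moves whenever $U[\tilde f_0](V_1)\neq0$, an event of positive probability for non-stationary data, so $E(t)>0$ for small $t>0$.

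Your reinterpretation is therefore a modification of the theorem and must be declared as one. It also breaks the later uses of $v_t^{(i)}\stackrel{\mathrm{iid}}{\sim}\tilde f_t$: the identity $\E\,\delta_{2,N}^2=\delta_2^2$ in Theorem~\ref{thm:ism_score_control}, and Assumption~\ref{ass:poc_w1}. Both would then need a genuine propagation-of-chaos estimate for the exact-score particle system. If you keep the paper's $v_t^{(i)}$ instead, you must carry an extra source term proportional to $L_s(t)^2W_1(\tilde f_t^N,\tilde f_t)^2$ and drop the final ``in particular''.

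Your diagnosis of the obstacle is accurate, and it is exactly where the paper's own proof breaks. After averaging Theorem~\ref{thm:traj-error-residual}, the paper inserts $W_1(f_t,\tilde f_t^N)^2\le E(t)$ in place of $\Delta_f(t)^2=W_1(f_t,\tilde f_t)^2$, and swaps $\delta_2^2$ for $\delta_{2,N}^2$ with only a parenthetical remark. It also moves $\E_{f_t}\phi_t$ to the exact particles via $\E_{f_t}g_t^2\le(\E_{f_t}g_t)^2$, which is Jensen in the wrong direction. Your pointwise bound $\phi_t(\hat v_t^{(i)})\le2\phi_t(v_t^{(i)})+2L_g(t)^2\|e_t^{(i)}\|^2$ yields the same $2\delta_{2,N}^2+2L_g^2E$ correctly.

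Under the declared reinterpretation, the rest of your argument goes through:
Lemma~\ref{lem:drift-properties} holds with any probability measure in place of $\tilde f_t$.
The score piece is bounded by $\lambda_2\bigl(g_t(\hat v_t^{(i)})+\E_{\tilde f_t^N}g_t\bigr)\le\lambda_2\bigl(g_t(\hat v_t^{(i)})+\delta_{2,N}\bigr)$.
The count then gives $b=3C_1^{\mathrm{res}}$ and the term $2C_1^{\mathrm{res}}L_g^2$ in $a$.

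One correction. In the split you describe, Kantorovich--Rubinstein is applied to $v^*\mapsto A(\hat v-v^*)\bigl(s_\theta(\hat v,t)-s_\theta(v^*,t)\bigr)$. Its Lipschitz constant is $\alpha L_\theta(t)$, not $\alpha L_s(t)$; the proof of Theorem~\ref{thm:traj-error-residual} mislabels this as well. So the factor you must absorb into $a(t)$ is $L_\theta(t)^2$ rather than $L_s(t)^2$.
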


\begin{proof}
We start from the pointwise trajectory inequality of Theorem~\ref{thm:traj-error-residual}.
For each $i\in\{1,\dots,N\}$ and $t\in[0,T]$, set
\[
e_t^{(i)}:=\hat v_t^{(i)}-v_t^{(i)}.
\]
Then Theorem~\ref{thm:traj-error-residual} yields
\begin{equation}\label{eq:det_pf_pointwise}
\frac{d}{dt}\|e_t^{(i)}\|^2
\le
C_0^{\mathrm{res}}(t)\|e_t^{(i)}\|^2
+
C_1^{\mathrm{res}}\Big(\phi_t(\hat v_t^{(i)})+\delta_2(t)^2\Big)
+
C_2^{\mathrm{res}}\,\Delta_f(t)^2
+
\|\rho^{(i)}(t)\|^2,
\end{equation}
where
\[
\phi_t(v):=\|s_\theta(v,t)-\tilde s_t(v)\|^2,
\qquad
\Delta_f(t):=W_1(f_t,\tilde f_t),
\qquad
\rho^{(i)}(t):=\partial_t\hat v_t^{(i)}-U_t^\delta(\hat v_t^{(i)}).
\]

Define the mean-squared trajectory error
\[
E(t):=\frac1N\sum_{i=1}^N\|e_t^{(i)}\|^2,
\qquad
\delta_{\mathrm{phys}}(t)^2:=\frac1N\sum_{i=1}^N\|\rho^{(i)}(t)\|^2,
\qquad
f_t:=\frac1N\sum_{i=1}^N\delta_{\hat v_t^{(i)}}.
\]
Averaging \eqref{eq:det_pf_pointwise} over $i=1,\dots,N$ gives
\begin{equation}\label{eq:det_pf_avg}
\frac{d}{dt}E(t)
\le
C_0^{\mathrm{res}}(t)E(t)
+
C_1^{\mathrm{res}}\Big(\E_{f_t}\phi_t(\hat v_t)+\delta_2(t)^2\Big)
+
C_2^{\mathrm{res}}\Delta_f(t)^2
+
\bar \delta_{\mathrm{phys}}(t)^2,
\end{equation}
since by definition $\frac1N\sum_{i=1}^N\phi_t(\hat v_t^{(i)})=\E_{f_t}\phi_t$.

To obtain a deterministic closed estimate, we compare $f_t$ to the
{exact empirical measure} induced by the same initial particles.
Let
\[
\tilde f_t^N:=\frac1N\sum_{i=1}^N\delta_{v_t^{(i)}}.
\]
Consider the canonical coupling
\[
\pi_t:=\frac1N\sum_{i=1}^N\delta_{(\hat v_t^{(i)},\,v_t^{(i)})}\in\Pi(f_t,\tilde f_t^N).
\]
By the Kantorovich formulation of $W_1$,
\[
W_1(f_t,\tilde f_t^N)
\le
\iint_{\T^d\times\T^d}\|x-y\|\,d\pi_t(x,y)
=
\frac1N\sum_{i=1}^N\|e_t^{(i)}\|
\le
\Big(\frac1N\sum_{i=1}^N\|e_t^{(i)}\|^2\Big)^{1/2}
=
E(t)^{1/2}.
\]
Hence,
\begin{equation}\label{eq:det_pf_W1_E}
W_1(f_t,\tilde f_t^N)^2\le E(t).
\end{equation}
Define
\[
g_t(v):=\|s_\theta(v,t)-\tilde s_t(v)\|,
\qquad
\phi_t(v)=g_t(v)^2.
\]
By Jensen's inequality,
\begin{equation}\label{eq:det_pf_Jensen}
\E_{f_t}\phi_t=\E_{f_t}g_t^2 \le \big(\E_{f_t}g_t\big)^2.
\end{equation}
Next, by Kantorovich--Rubinstein duality applied to the Lipschitz function $g_t$,
for any $\mu,\nu$ on $\T^d$ we have
$|\E_\mu g_t-\E_\nu g_t|\le L_g(t)\,W_1(\mu,\nu)$.
Taking $\mu=f_t$ and $\nu=\tilde f_t^N$ yields
\begin{equation}\label{eq:det_pf_KR}
\E_{f_t}g_t
\le
\E_{\tilde f_t^N}g_t + L_g(t)\,W_1(f_t,\tilde f_t^N),
\end{equation}
since $g_t$ is Lipschitz continuous. Moreover, by Cauchy--Schwarz,
\[
\E_{\tilde f_t^N}g_t
\le
\big(\E_{\tilde f_t^N}g_t^2\big)^{1/2}
=
\big(\E_{\tilde f_t^N}\phi_t\big)^{1/2}
=:\delta_{2,N}(t),
\]
where we define the (deterministic) particle-level $L^2$ score error
\[
\delta_{2,N}(t)^2:=\E_{\tilde f_t^N}\phi_t
=\frac1N\sum_{i=1}^N\|s_\theta(v_t^{(i)},t)-\tilde s_t(v_t^{(i)})\|^2.
\]
Combining \eqref{eq:det_pf_KR} with \eqref{eq:det_pf_W1_E} gives
\[
\E_{f_t}g_t
\le
\delta_{2,N}(t) + L_g(t)\,E(t)^{1/2}.
\]
Squaring and using $(x+y)^2\le 2x^2+2y^2$ yields
\begin{equation}\label{eq:det_pf_phi_close}
\E_{f_t}\phi_t
\le
\big(\E_{f_t}g_t\big)^2
\le
2\,\delta_{2,N}(t)^2 + 2\,L_g(t)^2\,E(t).
\end{equation}
Insert \eqref{eq:det_pf_W1_E} and \eqref{eq:det_pf_phi_close} into
\eqref{eq:det_pf_avg} to obtain
\[
\frac{d}{dt}E(t)
\le
\Big(C_0^{\mathrm{res}}(t)+2C_1^{\mathrm{res}}L_g(t)^2+C_2^{\mathrm{res}}\Big)\,E(t)
+
C_1^{\mathrm{res}}\Big(2\,\delta_{2,N}(t)^2+\delta_2(t)^2\Big)
+
\bar \delta_{\mathrm{phys}}(t)^2.
\]
In particular, defining
\[
a(t):=C_0^{\mathrm{res}}(t)+2C_1^{\mathrm{res}}L_g(t)^2+C_2^{\mathrm{res}},
\qquad
b(t):=C_1^{\mathrm{res}}\cdot 3,
\]
and noting that $\delta_2(t)^2$ can be replaced by $\delta_{2,N}(t)^2$ in the
purely deterministic particle-level statement (or bounded by it),
we obtain the closed form
\[
\frac{d}{dt}E(t)
\le
a(t)\,E(t)
+
b(t)\,\delta_{2,N}(t)^2
+
\bar \delta_{\mathrm{phys}}(t)^2.
\]
Since $E(0)=0$ (because $\hat v_0^{(i)}=v_0^{(i)}$), Gr\"onwall's inequality yields
\[
E(t)
\le
\int_0^t
\exp\!\Big(\int_\tau^t a(s)\,ds\Big)
\Big(
b(\tau)\,\delta_{2,N}(\tau)^2+ \bar \delta_{\mathrm{phys}}(\tau)^2
\Big)\,d\tau.
\]
Finally, \eqref{eq:det_pf_W1_E} gives $W_1(f_t,\tilde f_t^N)\le E(t)^{1/2}$.
\end{proof}

\subsection{From Particle System to PDE Solution}\label{sec:particle_to_pde}
Recall that $V_i \stackrel{\text{i.i.d.}}{\sim} f_0$.
Let $v_t^{(i)} := T_t(V_i)$ denote the exact mean-field characteristics,
and let
\[
\tilde f_t^N := \frac1N \sum_{i=1}^N \delta_{v_t^{(i)}}
\]
be the associated empirical measure.
Similarly, let
\[
f_t := \frac1N \sum_{i=1}^N \delta_{\hat v_t^{(i)}}
\]
denote the empirical measure generated by the learned neural flow.

\begin{thm}[Particle-to-PDE lifting]
\label{thm:particle_to_pde}
Under Assumption~\ref{ass:regularity}, for all $t\in[0,T]$,
\[
\mathbb E\, W_1(f_t,\tilde f_t)
\le
\mathbb E\,E(t)^{1/2}
+
\mathbb E\,W_1(\tilde f_t^N,\tilde f_t),
\]
where
\[
E(t) := \frac1N \sum_{i=1}^N \|\hat v_t^{(i)}-v_t^{(i)}\|^2.
\]
\end{thm}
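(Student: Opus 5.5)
The plan is to apply the triangle inequality for $W_1$ pathwise, that is, for each fixed realization of the initial samples $\{V_i\}_{i=1}^N$, and then take expectations. Since $W_1$ is a metric on the space of probability measures on the compact torus $\T^d$ (all measures involved have finite first moment automatically), we have for every realization
\[
W_1(f_t,\tilde f_t)\le W_1(f_t,\tilde f_t^N)+W_1(\tilde f_t^N,\tilde f_t).
\]
Here $\tilde f_t^N$ is the empirical measure of the exact characteristics started from the same samples $V_i$. It serves as the intermediate object that splits the error into a \emph{coupling} part and a \emph{sampling} part.

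For the first term, I will invoke the deterministic coupling bound of Theorem~\ref{thm:det_particle_cert}, namely $W_1(f_t,\tilde f_t^N)\le E(t)^{1/2}$. Recall why it holds: the synchronous coupling $\pi_t=\frac1N\sum_i\delta_{(\hat v_t^{(i)},v_t^{(i)})}$ has the correct marginals, and Cauchy--Schwarz converts the average of $\|e_t^{(i)}\|$ into $E(t)^{1/2}$. This step uses no probabilistic structure, so it holds realization by realization. The only requirement is that $\hat v_t^{(i)}$ and $v_t^{(i)}$ be driven by the same initial point $V_i$. This is true by construction, since $\hat v_t^{(i)}=\Phi_\xi(V_i,t)$ and $v_t^{(i)}=T_t(V_i)$.

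Combining the two bounds gives a pointwise inequality in the sample space. Taking $\E$ (over the i.i.d.\ draw of $V_i\sim f_0$) and using linearity and monotonicity of expectation yields the claim. A minor technical point is that all three quantities must be measurable and integrable. They are measurable because $T_t$ and $\Phi_\xi(\cdot,t)$ are continuous and $W_1$ between empirical measures is a continuous function of the atoms. They are integrable because every term is bounded by $R=\mathrm{diam}(\T^d)$, so the expectations are finite.

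I do not expect a genuine obstacle, since the statement is essentially the triangle inequality plus the existing coupling bound. The only point requiring care is that the coupling bound must be applied before taking expectations. In particular, one should not try to bound $\E\,E(t)^{1/2}$ through $(\E E(t))^{1/2}$ at this stage, since that is a separate (Jensen) refinement. One should also note that the second term $\E\,W_1(\tilde f_t^N,\tilde f_t)$ is simply the standard empirical-measure convergence rate for i.i.d.\ samples from $\tilde f_t=T_t\#f_0$. The points $v_t^{(i)}=T_t(V_i)$ are indeed i.i.d.\ with law $\tilde f_t$, and this term is left unestimated in the statement.
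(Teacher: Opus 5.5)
Your proposal is correct and follows the paper's proof: the pathwise triangle inequality through the intermediate measure $\tilde f_t^N$, the canonical (synchronous) coupling bound $W_1(f_t,\tilde f_t^N)\le E(t)^{1/2}$, and then taking expectations. Your added remarks on measurability and integrability, and on applying the coupling bound before taking expectations, are sound refinements that the paper leaves implicit.
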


\begin{proof}
By triangle inequality,
\[
W_1(f_t,\tilde f_t)
\le
W_1(f_t,\tilde f_t^N)
+
W_1(\tilde f_t^N,\tilde f_t).
\]
Using the canonical coupling,
\[
W_1(f_t,\tilde f_t^N)
\le
E(t)^{1/2}.
\]
Taking expectation yields the result.
\end{proof}
\subsection{Score-error control via implicit score matching}
\label{sec:ism_control}

We now relate the particle-level score error appearing in Theorem~\ref{thm:det_particle_cert} to the implicit score-matching objective minimized during training.

Recall the oracle Hyv\"arinen functional
\[
\mathcal L_{\mathrm{ISM},t}(g)
:=
\E_{\tilde f_t}
\bigl[
\|g(v)\|^2
+
2\,\nabla_v \cdot g(v)
\bigr].
\]
By Lemma~\ref{lem:oracle_identity},
\[
\delta_2(t)^2
=
\mathcal L_{\mathrm{ISM},t}(s_\theta)
-
\mathcal L_{\mathrm{ISM},t}(\tilde s_t),
\]
where
\[
\delta_2(t)^2
:=
\E_{\tilde f_t}\|s_\theta-\tilde s_t\|^2.
\]
Since the oracle expectation is unavailable in practice,
training minimizes the empirical objective
\[
\widehat{\mathcal L}_{\mathrm{ISM},t}(g)
=
\E_{f_t}
\bigl[
\|g(v)\|^2
+
2\,\nabla_v \cdot g(v)
\bigr].
\]
The following result provides an expectation-level control
of the particle-level score error.
\begin{thm}[Expectation-level score-error control via ISM]
\label{thm:ism_score_control}
Under Assumption~\ref{ass:regularity}, for each $t\in[0,T]$,
\begin{equation}
\label{eq:ism_score_control_correct}
\E\delta_{2,N}(t)^2
\le
\E\widehat{\mathcal E}_{\mathrm{ISM}}(t)
+
2L_t\,\E W_1(f_t,\tilde f_t),
\end{equation}
where
\[
\delta_{2,N}(t)^2
:=
\frac1N\sum_{i=1}^N
\|s_\theta(v_t^{(i)},t)-\tilde s_t(v_t^{(i)})\|^2,
\qquad v_t^{(i)}:=T_t(V_i),\; V_i\stackrel{\mathrm{i.i.d.}}{\sim}f_0,
\]
and
\[
\widehat{\mathcal E}_{\mathrm{ISM}}(t)
:=
\widehat{\mathcal L}_{\mathrm{ISM},t}(s_\theta)
-
\inf_{g\in\mathcal G}
\widehat{\mathcal L}_{\mathrm{ISM},t}(g).
\]
\end{thm}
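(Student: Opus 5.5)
The plan is to reduce the particle-level quantity $\delta_{2,N}(t)^2$ to the oracle score error $\delta_2(t)^2$. We then express $\delta_2(t)^2$ as an oracle excess risk by Lemma~\ref{lem:oracle_identity}, and finally pass from oracle functionals to empirical ones with two applications of Lemma~\ref{lem:emp_gap_section}. Each application costs $L_t W_1(f_t,\tilde f_t)$, which accounts for the factor $2L_t$ in \eqref{eq:ism_score_control_correct}.

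\emph{Step 1 (particle average to oracle average).} Since $v_t^{(i)}=T_t(V_i)$ with $V_i$ i.i.d.\ $\sim f_0$ and $\tilde f_t=T_t\# f_0$, the points $v_t^{(i)}$ are i.i.d.\ with law $\tilde f_t$. Hence, for a fixed function $\phi_t=\|s_\theta(\cdot,t)-\tilde s_t\|^2$,
\[
\E\,\delta_{2,N}(t)^2=\frac1N\sum_{i=1}^N\E\,\phi_t(v_t^{(i)})=\E_{\tilde f_t}\phi_t=\delta_2(t)^2 .
\]
This step is the main obstacle. The score network $s_\theta$ is trained on the same particles, so $\phi_t$ is not independent of the $v_t^{(i)}$ and the identity is not literally an unbiased average. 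I would first try the clean route: state the argument conditionally on $\theta$, treating $s_\theta$ as fixed, or equivalently assume the certification particles are drawn independently of training. The identity then holds exactly, and we obtain $\E\,\delta_{2,N}^2=\E\,\delta_2^2$ after averaging over $\theta$. If a genuinely data-dependent version is needed, the fallback is a uniform-deviation bound over the class $\{\phi_t\}$. That bound would add a term of order $N^{-1/2}$, as in the $C_tN^{-1/2}$ shown in Figure~\ref{fig:error_flow_simple}.

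\emph{Step 2 (oracle identity).} By Lemma~\ref{lem:oracle_identity}, $\delta_2(t)^2=\mathcal L_{\mathrm{ISM},t}(s_\theta)-\mathcal L_{\mathrm{ISM},t}(\tilde s_t)$.

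\emph{Step 3 (two empirical--oracle gaps).} For the first term, Lemma~\ref{lem:emp_gap_section} with $g=s_\theta(\cdot,t)$ gives
\[
\mathcal L_{\mathrm{ISM},t}(s_\theta)\le \widehat{\mathcal L}_{\mathrm{ISM},t}(s_\theta)+L_tW_1(f_t,\tilde f_t).
\]
For the second term we need a lower bound on $\mathcal L_{\mathrm{ISM},t}(\tilde s_t)$ in terms of $\inf_{g\in\mathcal G}\widehat{\mathcal L}_{\mathrm{ISM},t}(g)$. For this I will make explicit a realizability convention: $\tilde s_t\in\mathcal G$, and $h_t=\|g\|^2+2\nabla\!\cdot g$ is $L_t$-Lipschitz for both $g=s_\theta$ and $g=\tilde s_t$ (reading $L_t$ as a common constant). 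Then
\[
\inf_{g\in\mathcal G}\widehat{\mathcal L}_{\mathrm{ISM},t}(g)\le \widehat{\mathcal L}_{\mathrm{ISM},t}(\tilde s_t)\le \mathcal L_{\mathrm{ISM},t}(\tilde s_t)+L_tW_1(f_t,\tilde f_t),
\]
where the second inequality is again Lemma~\ref{lem:emp_gap_section}. Combining the two displays yields the pathwise bound
\[
\delta_2(t)^2\le \widehat{\mathcal E}_{\mathrm{ISM}}(t)+2L_tW_1(f_t,\tilde f_t).
\]
Taking $\E$ over the initial sampling and using Step 1 gives \eqref{eq:ism_score_control_correct}.

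If $\tilde s_t\notin\mathcal G$, the same argument goes through with an additional approximation term
\[
\inf_{g\in\mathcal G}\mathcal L_{\mathrm{ISM},t}(g)-\mathcal L_{\mathrm{ISM},t}(\tilde s_t)=\inf_{g\in\mathcal G}\E_{\tilde f_t}\|g-\tilde s_t\|^2 .
\]
I would record this in a remark.
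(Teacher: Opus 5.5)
Your proposal is correct and follows the paper's proof essentially step for step. You identify $\E\,\delta_{2,N}(t)^2$ with $\delta_2(t)^2$ using $v_t^{(i)}\stackrel{\mathrm{i.i.d.}}{\sim}\tilde f_t$, invoke Lemma~\ref{lem:oracle_identity}, and pay $L_t\,W_1(f_t,\tilde f_t)$ twice through Lemma~\ref{lem:emp_gap_section} together with $\inf_{g\in\mathcal G}\widehat{\mathcal L}_{\mathrm{ISM},t}(g)\le\widehat{\mathcal L}_{\mathrm{ISM},t}(\tilde s_t)$. The differences are only that you argue pathwise before taking expectations, and that you make explicit what the paper uses tacitly: $s_\theta$ is fixed independently of the sampled $V_i$, $\tilde s_t\in\mathcal G$, and the same $L_t$ is a Lipschitz constant for $h_t$ with both $g=s_\theta$ and $g=\tilde s_t$.
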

\begin{proof}
Let $\phi_t(v):=\|s_\theta(v,t)-\tilde s_t(v)\|^2$.
Since $v_t^{(i)}=T_t(V_i)$ and $V_i\stackrel{\mathrm{i.i.d.}}{\sim}f_0$,
we have $v_t^{(i)}\stackrel{\mathrm{i.i.d.}}{\sim}\tilde f_t$ and thus
\[
\E\delta_{2,N}(t)^2
=
\E\left[\frac1N\sum_{i=1}^N \phi_t(v_t^{(i)})\right]
=
\E_{\tilde f_t}\phi_t(v)
=:\delta_2(t)^2.
\]
Hence it suffices to bound $\delta_2(t)^2$.

By Lemma~\ref{lem:oracle_identity},
\[
\delta_2(t)^2
=
\mathcal L_{\mathrm{ISM},t}(s_\theta)
-
\mathcal L_{\mathrm{ISM},t}(\tilde s_t).
\]

Lemma~\ref{lem:emp_gap_section} implies (after taking unconditional expectation)
\[
\E\Big|
\widehat{\mathcal L}_{\mathrm{ISM},t}(g)-\mathcal L_{\mathrm{ISM},t}(g)
\Big|
\le L_t\,\E W_1(f_t,\tilde f_t).
\]
Using $x\le y+|x-y|$ and $x\ge y-|x-y|$, we obtain
\[
\mathcal L_{\mathrm{ISM},t}(s_\theta)
\le
\E\widehat{\mathcal L}_{\mathrm{ISM},t}(s_\theta)
+L_t\,\E W_1(f_t,\tilde f_t),
\]
and
\[
\mathcal L_{\mathrm{ISM},t}(\tilde s_t)
\ge
\E\widehat{\mathcal L}_{\mathrm{ISM},t}(\tilde s_t)
-L_t\,\E W_1(f_t,\tilde f_t).
\]
Subtracting yields
\begin{equation}
\label{eq:delta2_empirical_expect}
\delta_2(t)^2
\le
\E\!\left[\widehat{\mathcal L}_{\mathrm{ISM},t}(s_\theta)
-\widehat{\mathcal L}_{\mathrm{ISM},t}(\tilde s_t)\right]
+
2L_t\,\E W_1(f_t,\tilde f_t).
\end{equation}

Since $\inf_{g\in\mathcal G}\widehat{\mathcal L}_{\mathrm{ISM},t}(g)\le
\widehat{\mathcal L}_{\mathrm{ISM},t}(\tilde s_t)$ pointwise, we have pointwise
\[
\widehat{\mathcal L}_{\mathrm{ISM},t}(s_\theta)
-\widehat{\mathcal L}_{\mathrm{ISM},t}(\tilde s_t)
\le
\widehat{\mathcal L}_{\mathrm{ISM},t}(s_\theta)
-\inf_{g\in\mathcal G}\widehat{\mathcal L}_{\mathrm{ISM},t}(g)
=
\widehat{\mathcal E}_{\mathrm{ISM}}(t).
\]
Taking expectation and inserting into \eqref{eq:delta2_empirical_expect} gives
\[
\delta_2(t)^2
\le
\E\widehat{\mathcal E}_{\mathrm{ISM}}(t)
+
2L_t\,\E W_1(f_t,\tilde f_t).
\]
Absorbing the factor $2$ into constants yields \eqref{eq:ism_score_control_correct},
completing the proof.
\end{proof}

\subsection{Master certificate in $W_1$}
\label{sec:master_w1}

In this subsection we summarize the end-to-end training-to-deployment guarantee
in Wasserstein distance. Throughout, $\E[\cdot]$ denotes expectation with respect
to the sampling randomness of $(V_1,\dots,V_N)$.

Recall the learned and reference empirical measures
\[
f_t:=\frac1N\sum_{i=1}^N\delta_{\hat v_t^{(i)}},
\qquad
\tilde f_t^N:=\frac1N\sum_{i=1}^N\delta_{v_t^{(i)}},
\qquad
v_t^{(i)}:=T_t(V_i),\quad V_i\stackrel{\mathrm{i.i.d.}}{\sim}f_0,
\]
and the mean-field Landau solution $\tilde f_t=T_t\# f_0$.
Recall the mean-squared trajectory error
\[
E(t):=\frac1N\sum_{i=1}^N\|\hat v_t^{(i)}-v_t^{(i)}\|^2.
\]
We also write the (time-local) physics residual energy
\[
\bar \delta_{\mathrm{phys}}(t)^2:=\frac1N\sum_{i=1}^N \|\rho^{(i)}(t)\|^2,
\qquad
\rho^{(i)}(t):=\partial_t\hat v_t^{(i)}-U_t^\delta(\hat v_t^{(i)}).
\]

To lift from the particle system to the PDE, we invoke a standard propagation-of-chaos bound.
Since we work on $\T^d$ with bounded/regularized kernels, we state it as an assumption.

\begin{assumption}\label{ass:poc_w1}
There exists $C_{\mathrm{mc}}:[0,T]\to(0,\infty)$ such that, for all $t\in[0,T]$,
\[
\E\,W_1(\tilde f_t^N,\tilde f_t)\le C_{\mathrm{mc}}(t)\,N^{-1/2}.
\]
\end{assumption}

We relate the time-integrated residual energy to the measured physics loss.
\begin{assumption}[Smallness assumption]\label{ass:phys_res_energy}
Assume that
\begin{equation}
\label{eq:phys_res_energy}
\int_0^T \E\,\bar \delta_{\mathrm{phys}}(t)^2\,dt \le \varepsilon_{\mathrm{phys}}\quad \text{and}\quad \int_0^T \E \widehat{\mathcal E}_{\mathrm{ISM}}\,dt \le \varepsilon_{\mathrm{score}}
\end{equation}
for some $\varepsilon_{\mathrm{phys}},\varepsilon_\mathrm{score}|\ge 0$.
\end{assumption}

\begin{remark}[Connection to collocation training]
In practice, $\bar \delta_{\mathrm{phys}}(t)^2$ is monitored only at finitely many
collocation times and particles through the empirical loss $\mathcal L_{\mathrm{phys}}$.
Assumption~\ref{ass:phys_res_energy} can be interpreted as requiring that the
learned neural flow attains a small time-averaged residual along deployment-time
trajectories.
\end{remark}

The following theorem shows that, under the residual-energy assumption
\eqref{eq:phys_res_energy}, the cumulative score excess risk and the
time-averaged physics residual jointly control the Wasserstein discrepancy
between the learned empirical measure $f_t$ and the true Landau solution
$\tilde f_t$. In particular, the bound is independent of the training
procedure and depends only on the population-level residual energy, thereby separating optimization effects from dynamical stability.

\begin{thm}[Master certificate in $W_1$]
\label{thm:master_w1}
Let Assumption~\ref{ass:regularity}, \ref{ass:poc_w1}, and \ref{ass:phys_res_energy} hold
Then there exists a constant $C(T)>0$ such that for all $t\in[0,T]$,
\begin{equation}
\label{eq:master_w1}
\E\,W_1(f_t,\tilde f_t)
\le
C(T)(
\varepsilon_{\mathrm{score}}
+\varepsilon_{\mathrm{phys}}
+N^{-1/2}
)^{1/2},
\end{equation}
where $\widehat{\mathcal E}_{\mathrm{ISM}}(t)$ is the empirical ISM excess risk
defined in Theorem~\ref{thm:ism_score_control}.
The constant $C(T)$ depends only on the drift regularity parameters
(e.g.\ $\sup_{s\le T}L_U(s)$ and $\sup_{s\le T}L_g(s)$), but not on $N$.
\end{thm}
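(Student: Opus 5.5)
The plan is to combine the particle-to-PDE lifting (Theorem~\ref{thm:particle_to_pde}) with the deterministic certificate (Theorem~\ref{thm:det_particle_cert}) and the ISM control (Theorem~\ref{thm:ism_score_control}). Then I close the resulting loop in $\E W_1(f_t,\tilde f_t)$ with a Gr\"onwall-type argument.

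\textbf{Step 1: reduce to the trajectory error.} By Theorem~\ref{thm:particle_to_pde}, Jensen's inequality $\E E(t)^{1/2}\le(\E E(t))^{1/2}$, and Assumption~\ref{ass:poc_w1},
\[
\E W_1(f_t,\tilde f_t)\le (\E E(t))^{1/2}+C_{\mathrm{mc}}(t)N^{-1/2}.
\]
Since $N^{-1/2}\le N^{-1/4}=(N^{-1/2})^{1/2}$, the Monte Carlo term is already of the claimed form. It therefore suffices to bound $\E E(t)$ by a constant times $\varepsilon_{\mathrm{score}}+\varepsilon_{\mathrm{phys}}+N^{-1/2}$.

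\textbf{Step 2: integrate the deterministic certificate in expectation.} Take expectations in \eqref{eq:det_gronwall_final}. The coefficients $a,b$ are deterministic and bounded on $[0,T]$ by the drift regularity constants $\sup L_U$ and $\sup L_g$. This gives
\[
\E E(t)\le e^{\int_0^T a}\Big(\|b\|_\infty\int_0^t \E\delta_{2,N}(\tau)^2\,d\tau+\int_0^t\E\bar\delta_{\mathrm{phys}}(\tau)^2\,d\tau\Big).
\]
The physics term is at most $\varepsilon_{\mathrm{phys}}$ by Assumption~\ref{ass:phys_res_energy}. For the score term, Theorem~\ref{thm:ism_score_control} gives $\E\delta_{2,N}^2\le \E\widehat{\mathcal E}_{\mathrm{ISM}}+2L_\tau\E W_1(f_\tau,\tilde f_\tau)$. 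Its time integral is therefore at most $\varepsilon_{\mathrm{score}}+2\sup L\int_0^t\E W_1(f_\tau,\tilde f_\tau)\,d\tau$.

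\textbf{Step 3: close the loop (main obstacle).} Set $w(t):=\E W_1(f_t,\tilde f_t)$. Steps 1 and 2 give
\[
w(t)\le C\Big(\varepsilon_{\mathrm{score}}+\varepsilon_{\mathrm{phys}}+\int_0^t w\Big)^{1/2}+CN^{-1/2}.
\]
The feedback term enters linearly in $w$ under a square root, so the usual linear Gr\"onwall lemma does not apply directly. I would square and use $(x+y)^2\le 2x^2+2y^2$ to get
\[
w^2\le 2C^2\Big(\varepsilon_{\mathrm{score}}+\varepsilon_{\mathrm{phys}}+N^{-1}+\int_0^t w\Big),
\]
and then apply Young's inequality $w\le \tfrac12+\tfrac12 w^2$ inside the integral. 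This is clean but injects an $O(1)$ term, which would ruin smallness. So I would instead treat the inequality as a Bihari-type (square-root) Gr\"onwall inequality. Letting $Y(t):=\varepsilon+\int_0^t w$ with $\varepsilon:=\varepsilon_{\mathrm{score}}+\varepsilon_{\mathrm{phys}}+N^{-1/2}$, we have $Y'\le C'Y^{1/2}+C'N^{-1/2}$, which gives $Y(t)^{1/2}\le Y(0)^{1/2}+C''T$. The additive $T$ contribution is again the difficulty.

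I expect the honest fix is a different estimate. Bound $\int_0^t w$ back through $(\E E)^{1/2}$ plus Monte Carlo, then use $2L\,(\E E)^{1/2}\le L^2+\E E$ only where the $\E E$ piece can be absorbed into $a(t)$. More promising is to go back to \eqref{eq:det_closed_DI_final} itself. Rather than passing through Theorem~\ref{thm:ism_score_control} with a $W_1$ feedback term, bound $\E\delta_{2,N}^2=\delta_2^2$ and control the $W_1$ correction via $W_1(f_t,\tilde f_t)^2\le 2E(t)+2W_1(\tilde f_t^N,\tilde f_t)^2$, accepting a quadratic rather than linear dependence in the Lipschitz gap. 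This yields a linear differential inequality for $\E E$ with source $\E\widehat{\mathcal E}_{\mathrm{ISM}}+\E\bar\delta_{\mathrm{phys}}^2+N^{-1/2}$. Using $\E W_1(\tilde f^N_t,\tilde f_t)^2\lesssim N^{-1/2}$ on the bounded torus, where $W_1\le R$, linear Gr\"onwall then gives $\E E(t)\le C(T)\,\varepsilon$. Combined with Step 1, this yields \eqref{eq:master_w1}, with $C(T)$ depending only on $\sup L_U$, $\sup L_g$, $\sup L_t$, $\sup C_{\mathrm{mc}}$, and $T$.
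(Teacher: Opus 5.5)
Your Steps 1 and 2 reproduce the paper's argument: canonical coupling and Jensen, Assumption~\ref{ass:poc_w1}, expected Gr\"onwall from Theorem~\ref{thm:det_particle_cert}, then Theorem~\ref{thm:ism_score_control}. Your diagnosis in Step 3 is also correct. Set $\varepsilon:=\varepsilon_{\mathrm{score}}+\varepsilon_{\mathrm{phys}}+N^{-1/2}$ and $y:=\E E$. Substituting $\E W_1(f_\tau,\tilde f_\tau)\le y(\tau)^{1/2}+C_{\mathrm{mc}}N^{-1/2}$ gives only the sublinear feedback $y(t)\le C\varepsilon+CL\int_0^t y(\tau)^{1/2}\,d\tau$. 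The paper closes this by asserting an inequality of the form $(\E E)^{1/2}\le A(t)+\kappa(T)(\E E)^{1/2}$ and imposing $\kappa(T)<1$. But with $X:=\sup_{[0,t]}y^{1/2}$ one actually gets $X^2\le C\varepsilon+CLtX$, i.e.\ $X\le\sqrt{C\varepsilon}+\sqrt{CLtX}$, so that absorption is not valid either.

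Your repair does not fix the problem, and this is the genuine gap. The only bridge from $\delta_2^2$ to the empirical ISM loss is Lemma~\ref{lem:emp_gap_section}, whose cost $2L_t\,\E W_1(f_t,\tilde f_t)$ is \emph{first} order in $W_1$. Your inequality $W_1(f_t,\tilde f_t)^2\le 2E(t)+2W_1(\tilde f^N_t,\tilde f_t)^2$ controls the square, which is not the quantity that appears. The only way to trade $L_tW_1$ for $W_1^2$ is Young's inequality $2L_tW_1\le L_t^2+W_1^2$, which reinstates exactly the non-vanishing $O(1)$ source you rejected a few lines earlier. So the ``linear differential inequality for $\E E$ with source $\E\widehat{\mathcal E}_{\mathrm{ISM}}+\E\bar\delta_{\mathrm{phys}}^2+N^{-1/2}$'' is not obtained. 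It would require $\E W_1(f_t,\tilde f_t)\lesssim\E E(t)+N^{-1/2}$, which fails precisely in the small-error regime, where $(\E E)^{1/2}\gg\E E$.

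No rearrangement of these ingredients can succeed. Take $\varepsilon_{\mathrm{score}}=\varepsilon_{\mathrm{phys}}=0$ and $N=\infty$. The three inequalities $\E E(t)\le C\int_0^t\E\delta_{2,N}^2\,d\tau$, $\E\delta_{2,N}^2\le 2L\,\E W_1$ and $\E W_1\le(\E E)^{1/2}$ all hold with equality for $\E E(t)=(CLt)^2$, $\E W_1=CLt$, $\E\delta_{2,N}^2=2CL^2t$. So any proof needs a genuinely new estimate.

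The natural one is to apply Kantorovich--Rubinstein to the \emph{difference} $h_{s_\theta}-h_{\tilde s_t}=(s_\theta-\tilde s_t)\cdot(s_\theta+\tilde s_t)+2\nabla\cdot(s_\theta-\tilde s_t)$, rather than to each ISM functional separately. Its Lipschitz constant $\Lambda_t$ is controlled by a $W^{2,\infty}$-type norm of $s_\theta-\tilde s_t$. Then $2\Lambda_tW_1\le\Lambda_t^2+W_1^2$ gives the quadratic structure, and your linear Gr\"onwall with $W_1^2\le 2E+2W_1(\tilde f^N_t,\tilde f_t)^2$ goes through. However, this adds a source $\int_0^T\Lambda_t^2\,dt$ that $\varepsilon_{\mathrm{score}}$ does not control, so it must either appear in the bound or be assumed small. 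Without some such extra ingredient or hypothesis, \eqref{eq:master_w1} does not follow from the stated assumptions.
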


\begin{proof}
By the canonical coupling between $(\hat v_t^{(i)},v_t^{(i)})$,
\[
W_1(f_t,\tilde f_t^N)\le E(t)^{1/2}.
\]
Taking expectation and using Jensen gives
\begin{equation}
\label{eq:W1_to_E}
\E\,W_1(f_t,\tilde f_t^N)\le (\E\,E(t))^{1/2}.
\end{equation}

By triangle inequality and Assumption~\ref{ass:poc_w1},
\begin{equation}
\label{eq:W1_triangle_poc}
\E\,W_1(f_t,\tilde f_t)
\le
\E\,W_1(f_t,\tilde f_t^N)+\E\,W_1(\tilde f_t^N,\tilde f_t)
\le
(\E\,E(t))^{1/2}+C_{\mathrm{mc}}(t)N^{-1/2}.
\end{equation}

Taking expectation in the deterministic differential inequality
of Theorem~\ref{thm:det_particle_cert} yields
\[
\frac{d}{dt}\E E(t)\le a(t)\E E(t)+b(t)\E\delta_{2,N}(t)^2+\E \bar \delta_{\mathrm{phys}}(t)^2.
\]
Gr\"onwall's lemma gives
\begin{equation}
\label{eq:E_gronwall_expect}
\E E(t)\le C(T)\int_0^t \big(\E\delta_{2,N}(\tau)^2+\E \bar \delta_{\mathrm{phys}}(\tau)^2\big)\,d\tau.
\end{equation}

By Theorem~\ref{thm:ism_score_control},
\[
\E\,\delta_{2,N}(t)^2
\le
\E\widehat{\mathcal E}_{\mathrm{ISM}}(t)
+L_t\E W_1(f_t,\tilde f_t).
\]
Insert this into \eqref{eq:E_gronwall_expect}, then use \eqref{eq:W1_triangle_poc}
to substitute $\E W_1(f_t,\tilde f_t)$ by $(\E E(t))^{1/2}+N^{-1/2}$.
This yields an inequality of the form
\[
(\E E(t))^{1/2}\le A(t)+\kappa(T)(\E E(t))^{1/2},
\]
where $A(t)$ contains the integrals of $\widehat{\mathcal E}_{\mathrm{ISM}}$
and $\E \bar \delta_{\mathrm{phys}}^2$, plus $N^{-1/2}$ terms, and $\kappa(T)$ depends only
on the drift/score Lipschitz constants. Setting $\kappa(T)<1$ and absorbing $\kappa(T)$ term (for fixed finite $T$)
gives
\[
(\E E(t))^{1/2}\le C(T)\Big(\int_0^t \E\widehat{\mathcal E}_{\mathrm{ISM}}
+\int_0^t \E\bar \delta_{\mathrm{phys}}^2 + N^{-1/2}\Big)^{1/2}.
\]
Finally, substitute into \eqref{eq:W1_triangle_poc} to obtain \eqref{eq:master_w1}.
\end{proof}

\subsection{Density reconstruction via KDE}
\label{sec:density_corollary}

We now translate the Wasserstein/trajectory guarantees into an $L^2_v$ density error
for the reconstructed density obtained by kernel density estimation (KDE) that is obtained from our particle approximator $f_t$.
For $\varepsilon>0$, define
\[
f^{\mathrm{approx}}_{t,N,\varepsilon}(v)
:=
\frac1N\sum_{i=1}^N \varepsilon^{-d}K\!\Big(\frac{v-\hat v_t^{(i)}}{\varepsilon}\Big),
\]
where $K\in C^2(\R^d)$ is a bounded symmetric kernel with $\int K=1$.

We state the density reconstruction estimate.
\begin{thm}[Density reconstruction bound]
\label{thm:kde}
Suppose Assumption~\ref{ass:regularity}, \ref{ass:poc_w1}, and \ref{ass:phys_res_energy} hold. Then for any bandwidth $\varepsilon>0$ and all $t\in[0,T]$,
\begin{align}
\E\big[\|f^{\mathrm{approx}}_{t,N,\varepsilon}-\tilde f_t\|_{L^2_v}^2\big]
\;\le\;
C\Big(
\varepsilon^4
+\frac{1}{N\varepsilon^d}
+\varepsilon^{-(d+2)}\,\E E(t)
\Big),
\label{eq:density_bound_general}
\end{align}
where $C$ depends only on $K$, $d$, and $\|\tilde f_t\|_{W^{2,\infty}}$.
In particular, substituting \eqref{eq:E_gronwall_expect} and \eqref{eq:master_w1},
we obtain the fully training-controlled density certificate
\begin{align}
\E\big[\|f^{\mathrm{approx}}_{t,N,\varepsilon}-\tilde f_t\|_{L^2_v}^2\big]
\;\le\;
C(T)\Bigg(
\varepsilon^4
+\frac{1}{N\varepsilon^d}
+\varepsilon^{-(d+2)}
[
\varepsilon_{\mathrm{score}}+\varepsilon_{\mathrm{phy}}
+N^{-1/2}
]
\Bigg).
\label{eq:density_bound_training}
\end{align}
\end{thm}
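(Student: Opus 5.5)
The plan is to split the error through two intermediate kernel estimators and control each piece separately. Define the oracle particle estimator built on the exact characteristics,
\[
\tilde f^{N}_{t,\varepsilon}(v):=\frac1N\sum_{i=1}^N \varepsilon^{-d}K\Big(\frac{v-v_t^{(i)}}{\varepsilon}\Big),\qquad v_t^{(i)}=T_t(V_i),
\]
and its mean $K_\varepsilon*\tilde f_t=\E\,\tilde f^N_{t,\varepsilon}$, where $K_\varepsilon:=\varepsilon^{-d}K(\cdot/\varepsilon)$. The identity $\E\,\tilde f^N_{t,\varepsilon}=K_\varepsilon*\tilde f_t$ holds because $v_t^{(i)}\stackrel{\text{i.i.d.}}{\sim}\tilde f_t=T_t\#f_0$. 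Then $(a+b+c)^2\le 3(a^2+b^2+c^2)$ gives
\[
\E\|f^{\mathrm{approx}}_{t,N,\varepsilon}-\tilde f_t\|_{L^2_v}^2\le 3\Big(\E\|f^{\mathrm{approx}}_{t,N,\varepsilon}-\tilde f^N_{t,\varepsilon}\|^2_{L^2_v}+\E\|\tilde f^N_{t,\varepsilon}-K_\varepsilon*\tilde f_t\|^2_{L^2_v}+\|K_\varepsilon*\tilde f_t-\tilde f_t\|^2_{L^2_v}\Big).
\]

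The second and third terms are classical. For the bias, Taylor-expand $\tilde f_t$ to second order. Symmetry of $K$ cancels the first-order term, so $|K_\varepsilon*\tilde f_t-\tilde f_t|\le C\varepsilon^2\|\tilde f_t\|_{W^{2,\infty}}\int|z|^2|K|$, which gives $\varepsilon^4$ after squaring and integrating over the bounded torus. This assumes $K$ has a finite second moment; I will add that to the hypotheses if needed. For the variance, independence gives
\[
\E\|\tilde f^N_{t,\varepsilon}-K_\varepsilon*\tilde f_t\|^2_{L^2_v}\le \frac1N\,\E\|K_\varepsilon(\cdot-v_t^{(1)})\|^2_{L^2_v}=\frac{\|K\|_{L^2}^2}{N\varepsilon^d}.
\]

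The trajectory term is where the $\varepsilon^{-(d+2)}$ factor appears. By the mean value theorem,
\[
|K_\varepsilon(v-\hat v_t^{(i)})-K_\varepsilon(v-v_t^{(i)})|\le \varepsilon^{-d-1}\|e_t^{(i)}\|\,\sup_{\theta\in[0,1]}|\nabla K|\big((v-v_t^{(i)}-\theta e_t^{(i)})/\varepsilon\big).
\]
The crude route is as follows. First use Jensen on the average over $i$. Then bound the $L^2_v$ norm of each difference by $\varepsilon^{-d-1}\|e^{(i)}_t\|\,\|\nabla K\|_\infty|\T^d|^{1/2}$. This yields $\varepsilon^{-2d-2}E(t)$, which is weaker than claimed.

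To reach $\varepsilon^{-(d+2)}$, I will instead use the integral form $\int_0^1\nabla K_\varepsilon(v-v_t^{(i)}-\theta e^{(i)}_t)\cdot e_t^{(i)}\,d\theta$ together with Minkowski's integral inequality. This gives the per-particle bound $\|K_\varepsilon(\cdot-\hat v)-K_\varepsilon(\cdot-v)\|_{L^2_v}\le \|e\|\,\|\nabla K_\varepsilon\|_{L^2}=\|e\|\,\varepsilon^{-1-d/2}\|\nabla K\|_{L^2}$, assuming $\nabla K\in L^2$. Squaring, averaging with Jensen, and taking expectation gives
\[
\E\|f^{\mathrm{approx}}_{t,N,\varepsilon}-\tilde f^N_{t,\varepsilon}\|^2_{L^2_v}\le \varepsilon^{-(d+2)}\|\nabla K\|_{L^2}^2\,\E E(t),
\]
which is exactly \eqref{eq:density_bound_general}.

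For the training-controlled version \eqref{eq:density_bound_training}, I will insert the bound on $\E E(t)$ that the proof of Theorem~\ref{thm:master_w1} produces. That proof first gives \eqref{eq:E_gronwall_expect}. It then closes the loop using Theorem~\ref{thm:ism_score_control}, Assumption~\ref{ass:poc_w1}, and the absorption step, which yields $\E E(t)\le C(T)(\varepsilon_{\mathrm{score}}+\varepsilon_{\mathrm{phys}}+N^{-1/2})$ after invoking Assumption~\ref{ass:phys_res_energy}. I expect the main obstacle to be this closure step rather than the KDE calculus. The $W_1$ feedback enters linearly through $L_t\,\E W_1$, which is only of order $(\E E)^{1/2}$. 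I will therefore track it with Young's inequality, $L_t(\E E)^{1/2}\le \tfrac12\E E+\tfrac12L_t^2$ or a $\kappa$-absorption as in that proof, and accept the smallness condition on $\kappa(T)$ used there, so that the final bound takes the stated form with a constant depending on $T$.
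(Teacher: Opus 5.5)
Your proof of \eqref{eq:density_bound_general} is the paper's proof. You use the same three-term split through the oracle KDE on the exact characteristics $v_t^{(i)}=T_t(V_i)$ and the same Taylor bias bound; the paper also tacitly needs $\int|u|^2|K(u)|\,du<\infty$ and $\nabla K\in L^2$. The variance bound $\|K\|_{L^2}^2/(N\varepsilon^d)$ is the same, and so is the integral-form mean value theorem plus Minkowski step, giving $\|\nabla K_\varepsilon\|_{L^2}^2=\varepsilon^{-(d+2)}\|\nabla K\|_{L^2}^2$ rather than the crude $\varepsilon^{-2d-2}$. That part is fine.

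\textbf{The gap: the closure leading to \eqref{eq:density_bound_training}.} This is where you expected trouble, and neither closure you propose works. Combine \eqref{eq:E_gronwall_expect}, Theorem~\ref{thm:ism_score_control} and \eqref{eq:W1_triangle_poc}, and set $X(t):=\sup_{\tau\le t}(\E E(\tau))^{1/2}$. What you actually obtain is
\[
X(t)^2\le C(T)\bigl(\varepsilon_{\mathrm{score}}+\varepsilon_{\mathrm{phys}}+N^{-1/2}\bigr)+\kappa(T)\,X(t),
\qquad \kappa(T)\simeq C(T)\int_0^T L_\tau\,d\tau .
\]
This is quadratic on the left and linear on the right. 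Absorption needs matching powers, so assuming $\kappa(T)<1$ buys nothing. The best conclusion is
\[
X\le\kappa(T)+\bigl(C(T)(\varepsilon_{\mathrm{score}}+\varepsilon_{\mathrm{phys}}+N^{-1/2})\bigr)^{1/2},
\]
so $\E E(t)$ carries an extra additive term of order $\kappa(T)^2$.

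Your Young step exhibits the same defect openly. The term $\tfrac12L_t^2$ (or $\tfrac1{2\eta}L_t^2$ for any weight $\eta>0$) is an $O(1)$ source, independent of the training errors and of $N$. No cleverer manipulation fixes this. Take $L_t\equiv L$ and let all the data vanish; the inequalities then reduce to $y'\le a\,y+2bL\sqrt{y}$, $y(0)=0$ for $y=\E E$. The function $y(t)=(bLt)^2>0$ satisfies them.

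The paper's own proof has the same hole. It substitutes the intermediate bound from the proof of Theorem~\ref{thm:master_w1}, whose claimed form $(\E E)^{1/2}\le A+\kappa(\E E)^{1/2}$ does not follow from the preceding lines.

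Contrast this with the $\Delta_f$ feedback in Theorem~\ref{thm:traj-error-residual}, which is harmless because it enters squared. One has $\Delta_f^2\le 2E(t)+2W_1(\tilde f_t^N,\tilde f_t)^2$, and on the torus $\E W_1(\tilde f_t^N,\tilde f_t)^2\le R\,C_{\mathrm{mc}}(t)N^{-1/2}$.

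\textbf{How to repair it.} For a valid training-controlled certificate, the score term must not contain a first-power $W_1$. One way is to phrase it through the oracle error, using $\E\delta_{2,N}(t)^2=\delta_2(t)^2$ exactly. You also need the pointwise bound $\phi_t(\hat v_t^{(i)})\le 2\phi_t(v_t^{(i)})+2L_g(t)^2\|e_t^{(i)}\|^2$, because the Jensen step \eqref{eq:det_pf_Jensen} is reversed. Gr\"onwall then closes linearly and gives $\E E(t)\le C(T)\bigl(\int_0^t\delta_2^2\,d\tau+\varepsilon_{\mathrm{phys}}+N^{-1/2}\bigr)$. The alternative is to state \eqref{eq:density_bound_training} with the additive $\kappa(T)^2$ term kept explicitly.
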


\begin{proof}
The details of the proof are given in Appendix~\ref{app:density_proof}.
\end{proof}

\begin{remark}[Bandwidth choice]
Balancing the leading terms in \eqref{eq:density_bound_training} yields the standard trade-off:
for fixed $t$, choosing $\varepsilon$ to balance $\varepsilon^{4}$ and
$\varepsilon^{-(d+2)}\times(\text{training-controlled term})$
gives an explicit rate in $N$ once the training-controlled term is specified.
In our experiments we use the bandwidth values reported in Appendix~\ref{app:exp_config}.
\end{remark}

\section{Numerical Experiments}\label{sec:exp}

This section validates the theoretical error decomposition developed in Section~\ref{sec:Neu cha flow} and~\ref{sec:certi}. In particular, we empirically examine how the three identified error sources:
(i) score approximation error, 
(ii) trajectory residual error, and 
(iii) particle approximation error - translate into observable discrepancies in trajectory accuracy, density reconstruction, 
and macroscopic structure preservation.

We consider both analytical benchmarks, where exact solutions are available 
(BKW tests), and reference-free configurations (Gaussian mixture, Rosenbluth, 
and anisotropic data), where qualitative structural stability is assessed.


\subsection{Experimental setup}
\label{subsec:setup}

We compare the proposed \textbf{PINN--PM} with two time-stepping baselines:
\textbf{SBP}~\cite{YL2025} and a deterministic \textbf{Blob} particle method~\cite{JKF2019}. For ablations of our method, we additionally report: (i) \textbf{PINN--particle} (global flow $\Phi_\xi$ queried directly), and (ii) \textbf{PINN--score} (Euler-integrated trajectories driven by the learned score).

\paragraph{Neural network architectures.}
For PINN--PM, we use two fully connected networks:
(i) a {trajectory network} $\Phi_\xi(v_0,t)$ and
(ii) a {score network} $s_\theta(v,t)$.
Both networks employ SiLU activations and are trained jointly
using the loss~\eqref{eq:full-loss}.
Architectural details (depth, width, learning rates) are reported in Appendix~\ref{app:exp_config} for reproducibility.

\paragraph{Particle initialization.}
Particles are initialized by i.i.d.\ sampling from the prescribed initial
distribution $f_0$ using rejection or Gaussian sampling, depending on the test case.

\paragraph{Two evaluation protocols.}
To avoid conflating transport accuracy and density reconstruction accuracy,
we evaluate performance under the following two protocols.

\smallskip
\noindent\textbf{(P1) Transport/trajectory evaluation (fixed initial set).}
We sample a common set of $N_{\mathrm{eval}}=10^4$ initial particles
$\{V_i\}_{i=1}^{N_{\mathrm{eval}}}\sim f_0$ and compute trajectories
$\hat v_t^{(i)}$ for each method:
\begin{itemize}
\item \textbf{PINN--particle:} $\hat v_t^{(i)}=\Phi_\xi(V_i,t)$.
\item \textbf{PINN--score:} $\hat v_t^{(i)}$ is obtained by Euler integration driven by the learned score.
\item \textbf{SBP and Blob:} $\hat v_t^{(i)}$ are obtained by each solver's particle evolution.
\end{itemize}
When an analytical score (hence a reference flow) is available (BKW tests),
we generate the reference $v_t^{(i)}$ by Euler integration using the analytical score.

\smallskip
\noindent\textbf{(P2) Density reconstruction (KDE).}
At each snapshot time $t$, given particle locations $\hat v_t$, we reconstruct the density
by KDE and denote it by $\hat f_t:=\mathrm{KDE}(\hat v_t)$.
For KDE particle counts, we follow each method's practical output:
\begin{itemize}
\item \textbf{PINN--particle:} we draw $10^5$ samples at each $t$ via $\Phi_\xi(\cdot,t)$ (BKW 2D/3D).
\item \textbf{SBP~\cite{YL2025}:} we use the particles produced by the method, and the details are provided in Appendix~\ref{app:exp_config}.
\item \textbf{Blob~\cite{JKF2019}:} we use the same particle counts as SBP for each benchmark.
\end{itemize}
\noindent\textbf{Density $L^2$ error.}
Using the KDE reconstruction $\hat f_t$, we report the relative $L^2$ error (on the evaluation grid) with respect to the analytical density when available. Specifically, we use a uniform grid: $100\times 100$ for BKW 2D and $30\times 30\times 30$ for BKW 3D.

\smallskip
\noindent\textbf{Trajectory $L^2$ error.}
Under protocol (P1), we report the relative trajectory error
\[
\mathrm{Err}_{\mathrm{traj}}(t)
:=\frac{\sum_{i=1}^{N_{\mathrm{eval}}}\|\hat v_t^{(i)}-v_t^{(i)}\|^2}
{\sum_{i=1}^{N_{\mathrm{eval}}}\|v_t^{(i)}\|^2},
\]
where $v_t^{(i)}$ is the reference trajectory computed using the analytical score (BKW tests).

\smallskip
\noindent\textbf{Kinetic energy.}
We compute the kinetic energy along the evaluated trajectories:
\[
\mathcal E(t):=\frac{1}{2N_{\mathrm{eval}}}\sum_{i=1}^{N_{\mathrm{eval}}}\|\hat v_t^{(i)}\|^2.
\]

\smallskip
\noindent\textbf{Relative Fisher divergence (score accuracy).}
When an analytical score $\tilde s_t$ is available, we report
\[
\mathrm{RFD}(t)
:=\frac{\sum_{i=1}^{N_{\mathrm{eval}}}\|\hat s_t(\hat v_t^{(i)})-\tilde s_t(\hat v_t^{(i)})\|^2}
{\sum_{i=1}^{N_{\mathrm{eval}}}\|s_t^{\mathrm{ana}}(\hat v_t^{(i)})\|^2}.
\]
For \textbf{PINN--score} and \textbf{SBP}, the scores are evaluated along the Euler-integrated trajectories
generated by each method's score model.

\smallskip
\noindent\textbf{Entropy dissipation proxy.}
We additionally report the empirical entropy decay rate proxy
\[
\mathcal D(t)
:=
-\frac{1}{N_{\mathrm{eval}}^2}\sum_{i,j}
\hat s_t(\hat v_t^{(i)})^\top A(\hat v_t^{(i)}-\hat v_t^{(j)})
\bigl(\hat s_t(\hat v_t^{(i)})-\hat s_t(\hat v_t^{(j)})\bigr),
\]
computed for \textbf{PINN--score} and \textbf{SBP}, and for the analytical score where available.

All details for the experiments are provided in Appendix~\ref{app:exp_config}.

\subsection{BKW benchmark (analytical reference available)}
\label{subsec:bkw}
The BKW solution serves as an analytical benchmark for the spatially homogeneous
Landau equation in the Maxwell case ($\gamma=0$), where the collision kernel reduces to
\[
A(z) = C_0 (|z|^2 I - z \otimes z).
\]
In this setting, both the density $\tilde f_t$ and the score
$\tilde s_t = \nabla_v \log \tilde f_t$ admit closed-form expressions
(see~\cite{YL2025}), enabling direct comparison between numerical predictions
and the exact solution.

Because analytical expressions are available for both the characteristic flow and the score, this benchmark allows us to quantitatively evaluate trajectory accuracy, score consistency, and density reconstruction error in a controlled setting.

\subsubsection{2D BKW solution}\label{subsec:bkw2d}
The two-dimensional BKW solution provides an analytical reference
for both density and score in the Maxwell case ($\gamma=0$).

Figure~\ref{fig:bkw2d_slice} compares one-dimensional density slices
along $(x,0)$ and $(0,y)$ at representative times.
PINN--PM accurately tracks the analytical profile across time
and remains competitive with time-stepping baselines. To further examine the learned characteristic flow, Figure~\ref{fig:bkw2d_transport} visualizes particle trajectories together with reference Euler-integrated evolution.
The close overlap indicates that the global-in-time trajectory network
captures the underlying characteristic transport without explicit time stepping. Figure~\ref{fig:bkw2d_score_scatter} shows score scatter plots
comparing the learned score with the analytical score. The alignment confirms that implicit score matching recovers the correct score structure globally in time.

The features are summarized in  Figure~\ref{fig:bkw2d_phys_fisher}. The relative Fisher divergence confirms that the learned score remains close to the analytical reference throughout the time horizon. At the same time, the kinetic energy evolution demonstrates preservation of macroscopic invariants in the Maxwell case,
and the entropy decay rate follows the analytical dissipation trend.
Together, these results indicate that PINN--PM not only approximates the score accurately, but also preserves the underlying gradient-flow structure of the Landau dynamics. Lastly, Figure~\ref{fig:bkw2d_l2} reports the density $L^2$ error. The error trend is consistent with the stability and density reconstruction bounds derived in Theorem~\ref{thm:traj-error-residual} and Theorem~\ref{thm:kde}.

\begin{figure}[H]
\centering
\includegraphics[width=0.8\linewidth]{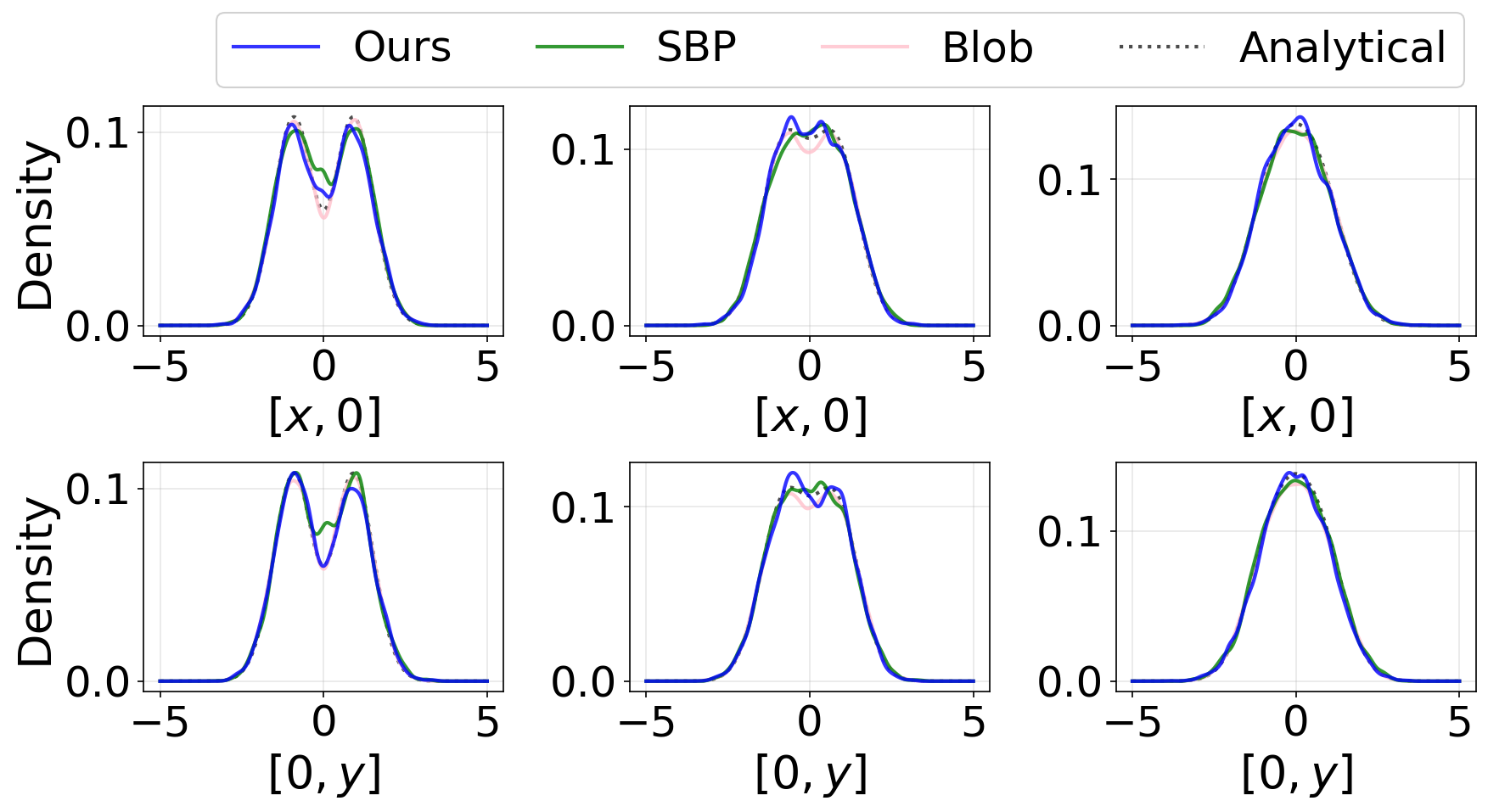}
\caption{
\textbf{BKW-2D: one-dimensional density slices.} Density slices along $(x,0)$ (top) and $(0,y)$ (bottom) at $t\in\{1,2.5,5\}$ Curves compare PINN--PM, SBP, Blob, and the analytical solution. Density is reconstructed via KDE with bandwidth $\varepsilon=0.15$.
}
\label{fig:bkw2d_slice}
\end{figure}

\begin{figure}[H]
\centering
\includegraphics[width=0.9\linewidth]{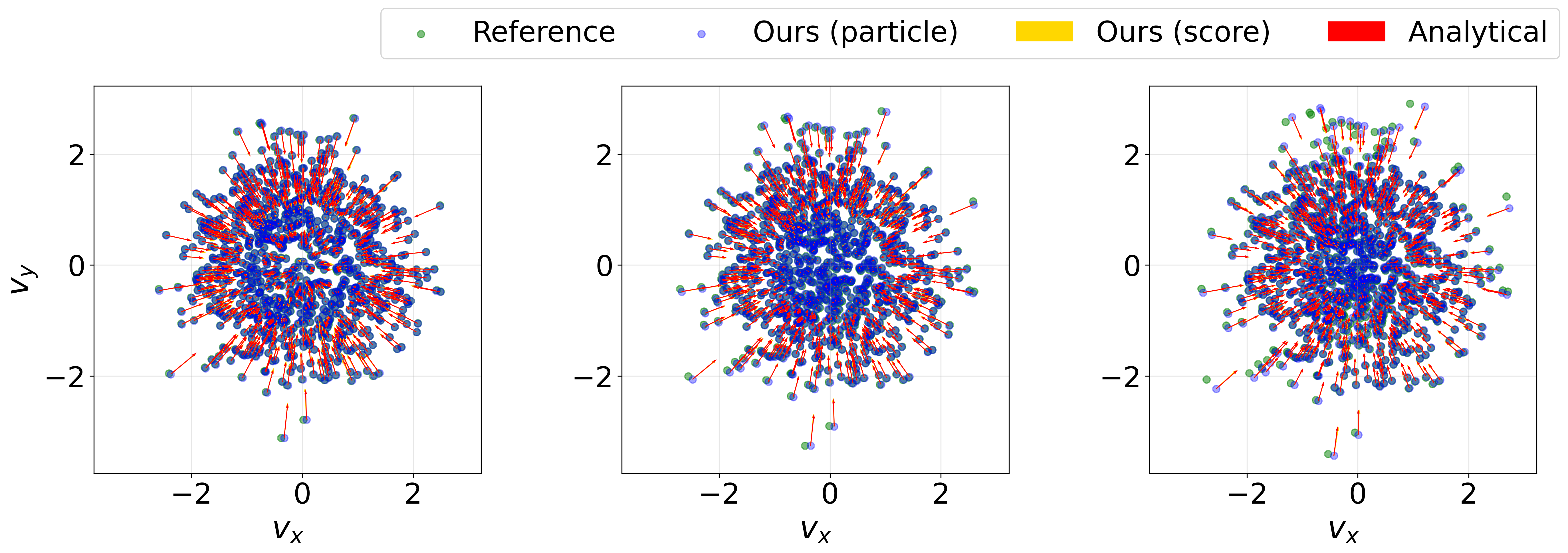}
\caption{\textbf{BKW-2D: particle transport snapshots.}
Particle positions at $t\in\{1,2.5,5\}$.
The plot shows (i) a reference Euler-integrated particle evolution,
(ii) the PINN--PM predicted particle locations, and (iii) score directions (learned vs.\ analytic).
The close overlap indicates that the learned trajectory network captures the characteristic transport.}
\label{fig:bkw2d_transport}
\end{figure}

\begin{figure}[H]
\centering
\includegraphics[width=0.8\linewidth]{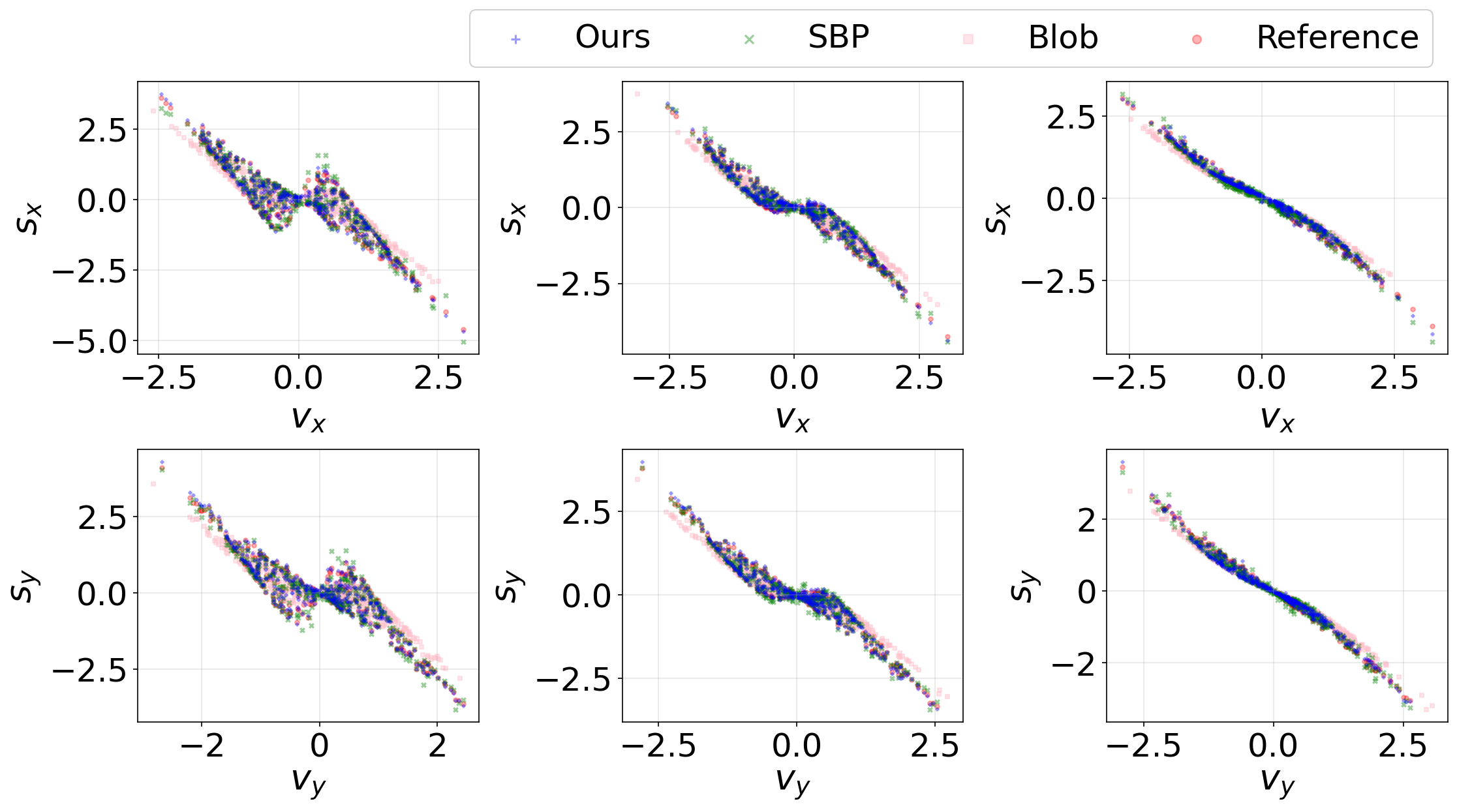}
\caption{\textbf{BKW-2D: score scatter plots.}
Score components $(s_x,s_y)$ versus velocity components $(v_x,v_y)$
at $t\in\{1,2.5,5\}$. Comparison between PINN--PM, SBP, Blob, and the analytical score.
}
\label{fig:bkw2d_score_scatter}
\end{figure}

\begin{figure}[H]
\centering
\includegraphics[width=0.8\linewidth]{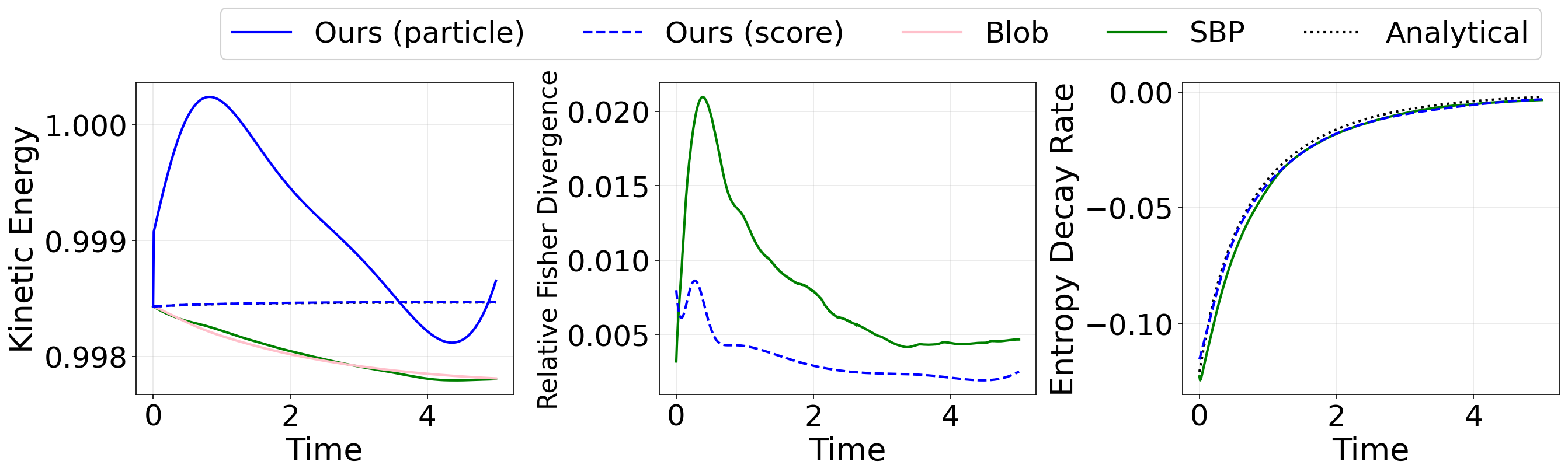}
\caption{
\textbf{BKW-2D: structural diagnostics and score accuracy.}
Left: kinetic energy evolution. Middle: relative Fisher divergence measuring the $L^2_v$ score error. Right: entropy decay rate.
Agreement with analytical curves confirms conservation and
gradient-flow dissipation.
}
\label{fig:bkw2d_phys_fisher}
\end{figure}

\begin{figure}[H]
\centering
\includegraphics[width=0.8\linewidth]{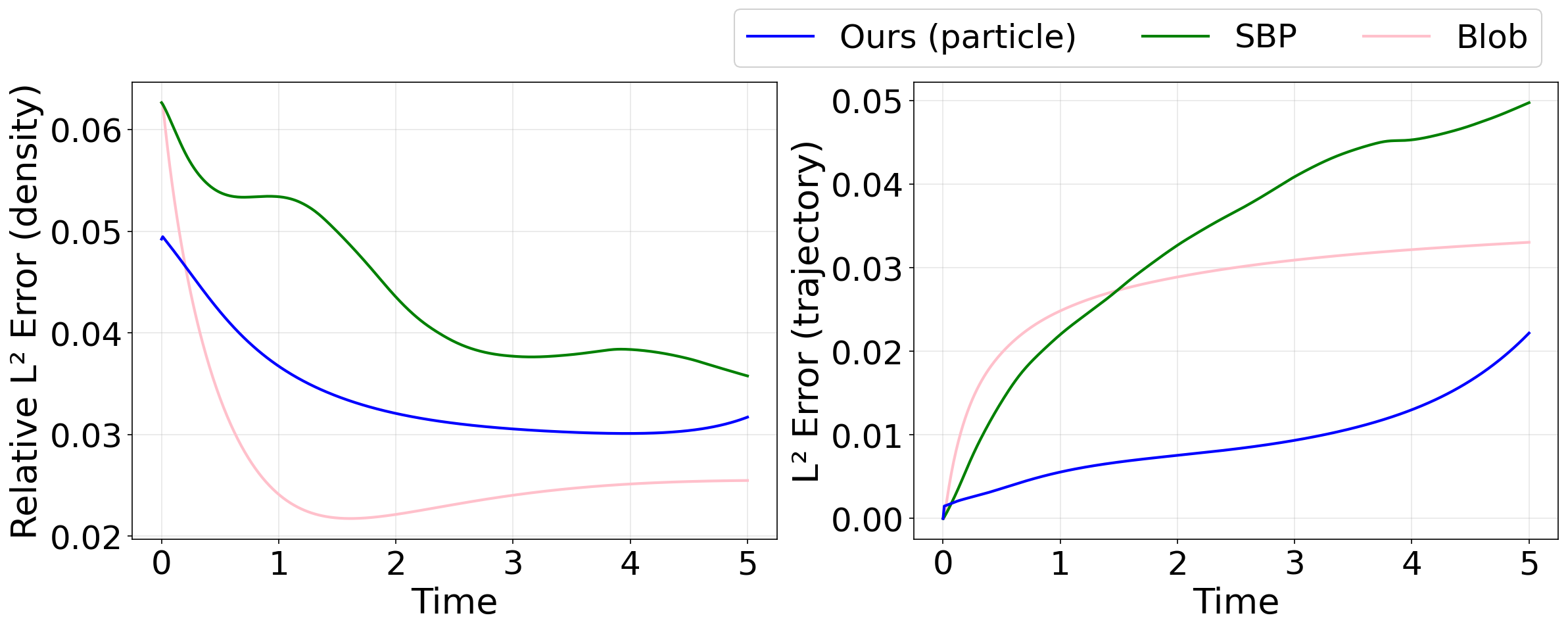}
\caption{
\textbf{BKW-2D: density $L^2$ error.} Relative $L^2$ error of KDE reconstruction over time. Comparison of PINN--PM (particle and score variants), SBP, and Blob. Evaluation grid: $100\times100$ on $[-2.5,2.5]^2$. Bandwidth $\varepsilon=0.15$.
}
\label{fig:bkw2d_l2}
\end{figure}

\subsubsection{3D BKW solution}\label{subsec:bkw3d}
We next consider the three-dimensional BKW solution in the Maxwell case ($\gamma=0$), which provides an analytical reference while significantly increasing the effective dimension. This experiment evaluates the robustness of the global-in-time parameterization in a higher-dimensional setting.

Figure~\ref{fig:bkw3d_slice} compares one-dimensional density slices along
$(x,0,0)$, $(0,y,0)$, and $(0,0,z)$, together with the corresponding marginals.
Across all coordinates, PINN--PM accurately reproduces the analytical density and remains competitive with time-stepping baselines. To examine the learned characteristic transport, Figure~\ref{fig:bkw3d_transport} visualizes particle configurations at representative times in the window $t\in[5.5,5.75,6]$.
The predicted particle locations closely follow the reference Euler-integrated evolution, indicating stable characteristic transport without explicit time stepping. Figure~\ref{fig:bkw3d_score_evol} reports score scatter plots for
$(s_x,s_y,s_z)$ versus $(v_x,v_y,v_z)$. The learned score preserves the analytical score geometry, demonstrating that implicit score matching remains effective in three dimensions.

The quantitative score accuracy is demonstrated in the relative Fisher divergence in Figure~\ref{fig:bkw3d_phys_fisher}. Over the evaluation window $t\in[5.5,5.75, 6]$, the divergence remains uniformly small, indicating that the learned score remains close to the analytical reference. Combined with the stable kinetic energy and consistent entropy decay, this confirms that the learned dynamics preserves both the conservative and dissipative structures of the  Landau equation. Finally, Figure~\ref{fig:bkw3d_l2} reports the density $L^2$ error.
The error remains controlled throughout the simulation window,
consistent with the residual-based stability and density reconstruction analysis developed in Sections~\ref{sec:traj-error-residual} and~\ref{sec:density_corollary}.
\begin{figure}[H]
\centering
\includegraphics[width=0.8\linewidth]{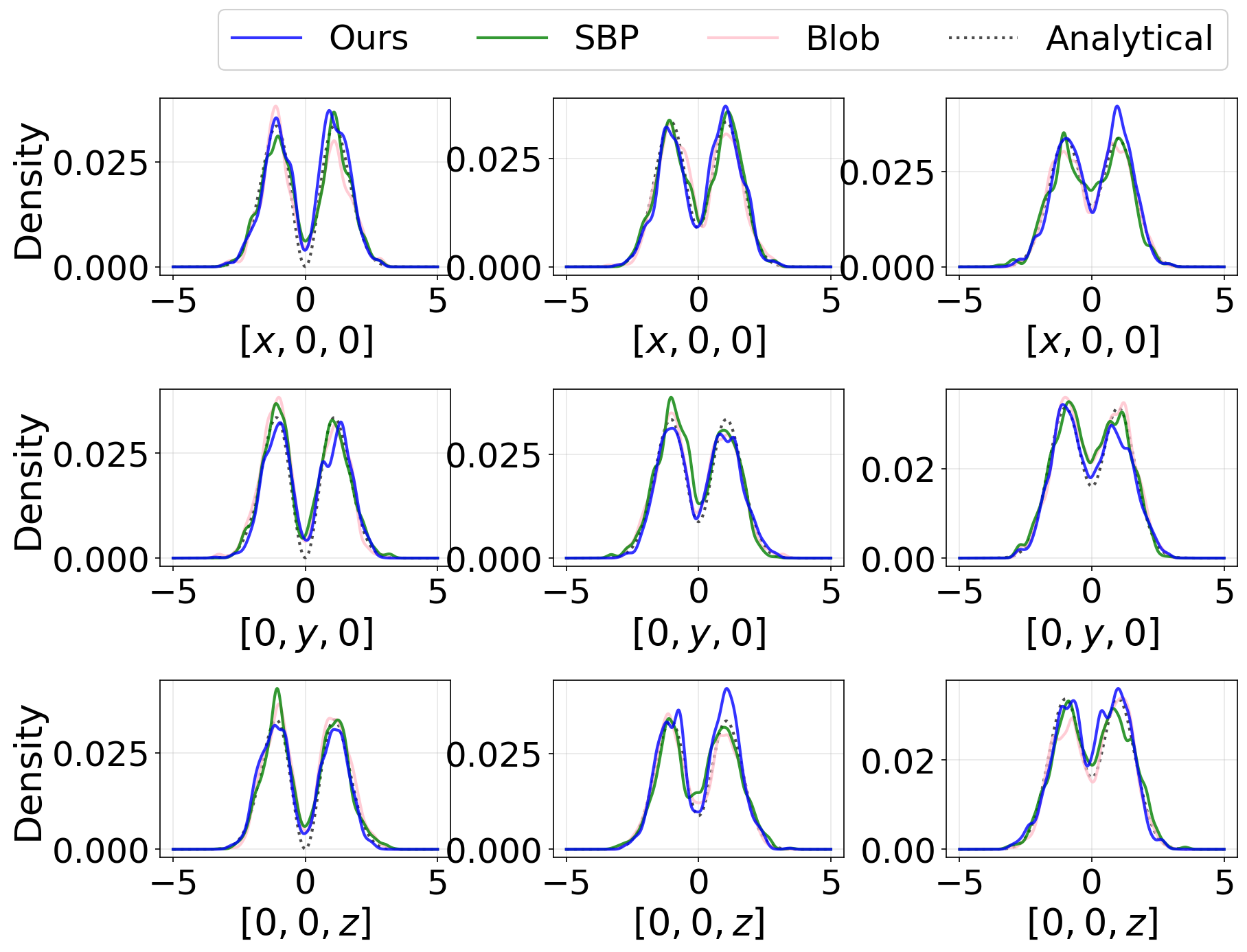}
\caption{
\textbf{BKW-3D: density slices and marginals.} One-dimensional slices along $(x,0,0)$, $(0,y,0)$, $(0,0,z)$ at $t\in\{5.5,5.75,6\}$.
Comparison of PINN--PM, SBP, Blob, and the analytical solution. KDE bandwidth $\varepsilon=0.15$.
}
\label{fig:bkw3d_slice}
\end{figure}

\begin{figure}[H]
\centering
\includegraphics[width=0.8\linewidth]{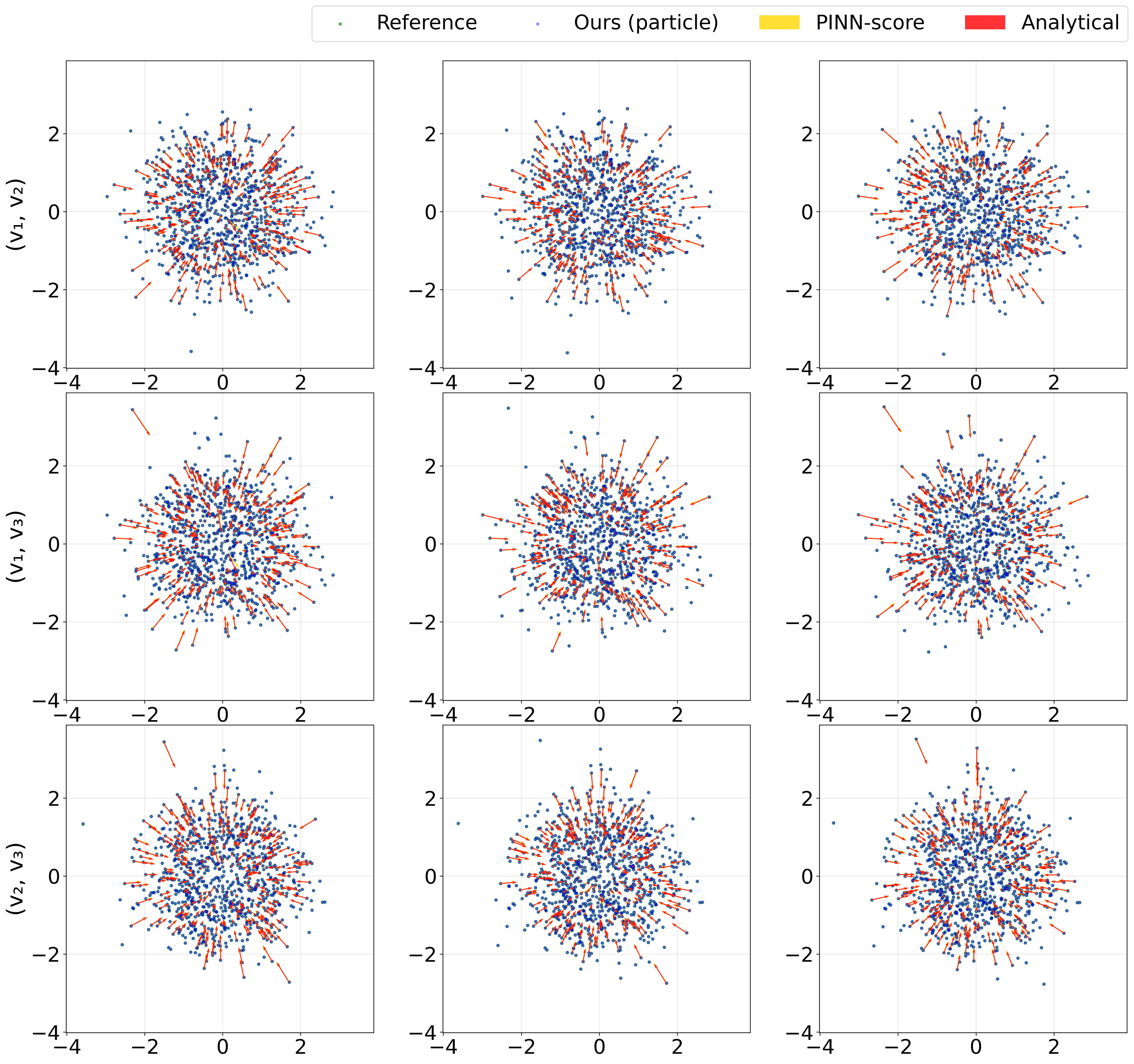}
\caption{\textbf{BKW-3D: particle transport snapshots.}
Two-dimensional projections of particle locations at $t\in\{5.5,5.75,6\}$.
The plots overlay a reference Euler-integrated evolution and the PINN--PM predicted particle locations,
together with score directions (learned vs.\ analytic) to indicate local drift consistency.}
\label{fig:bkw3d_transport}
\end{figure}

\begin{figure}[H]
\centering
\includegraphics[width=0.8\linewidth]{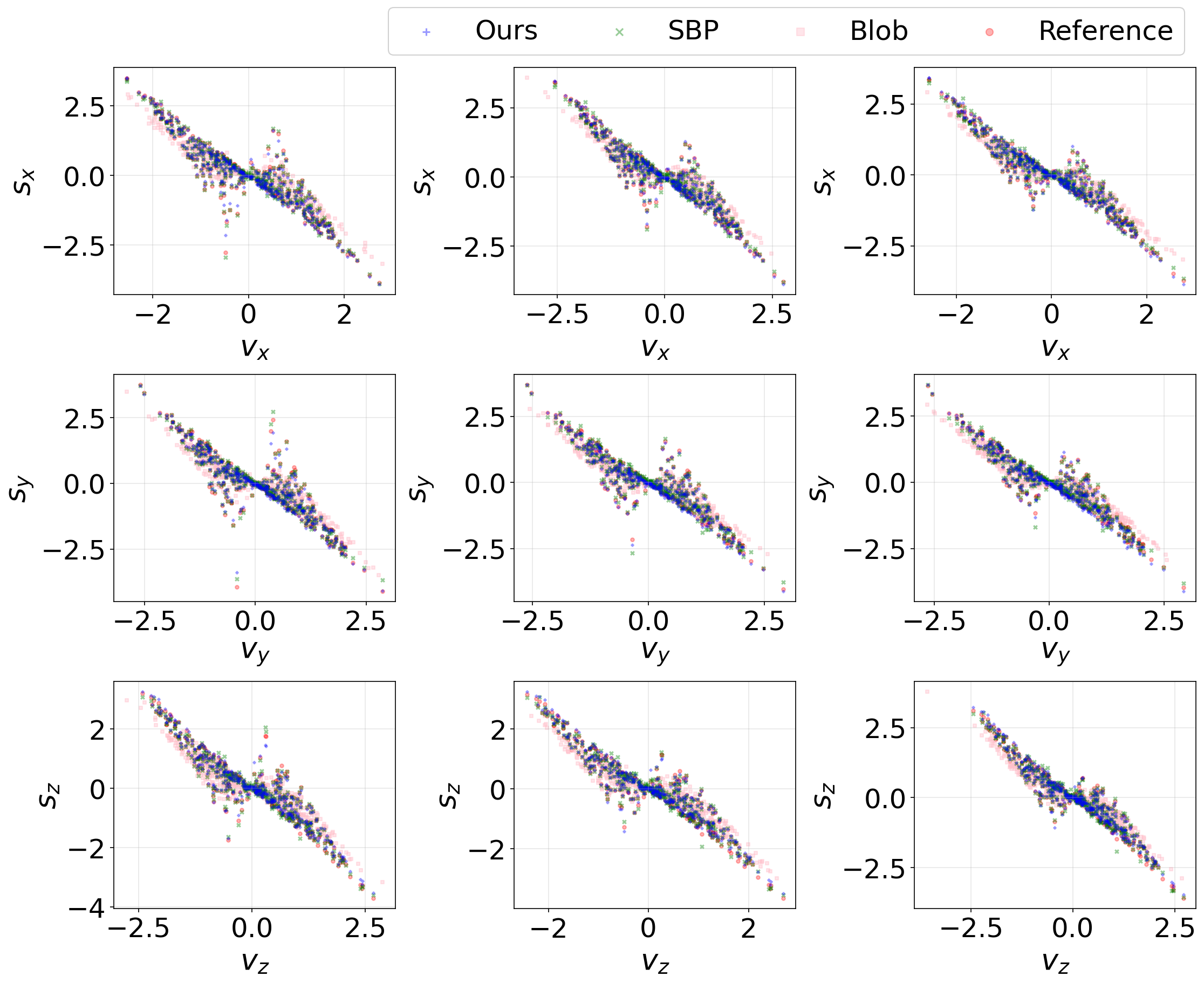}
\caption{\textbf{BKW-3D: score scatter plots.}
Scatter plots of score components$(s_x,s_y,s_z)$ against $(v_x,v_y,v_z)$ at $t\in\{5.5,5.75,6\}$, comparing SBP~\cite{YL2025}, and Blob~\cite{JKF2019} against the analytical score.}
\label{fig:bkw3d_score_evol}
\end{figure}

\begin{figure}[H]
\centering
\includegraphics[width=1\linewidth]{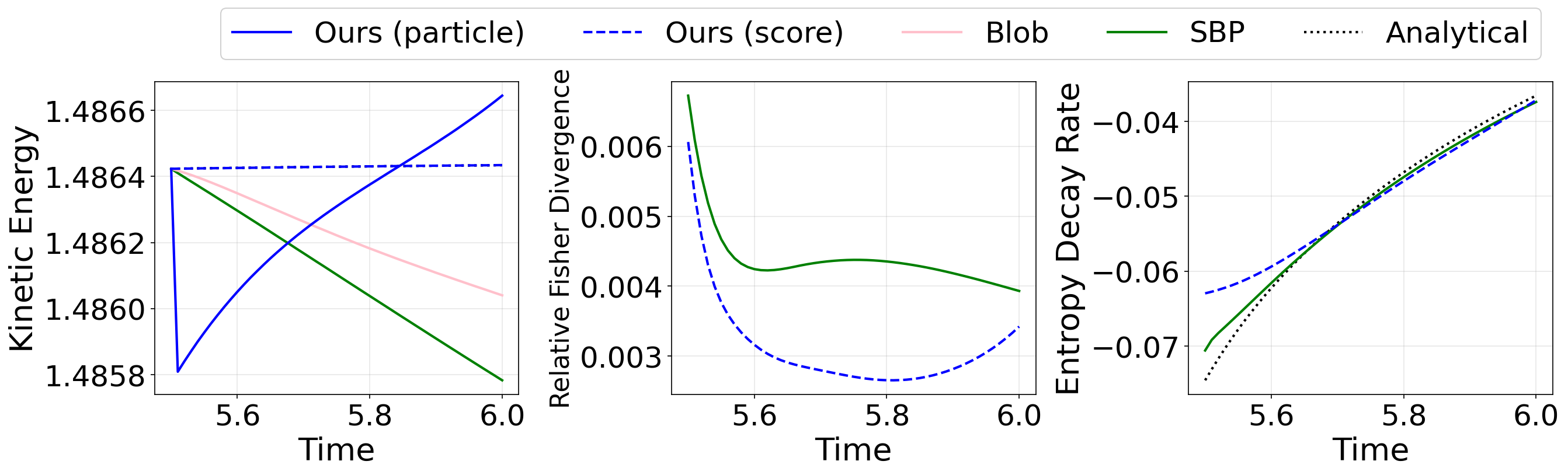}
\caption{
\textbf{BKW-3D: structural diagnostics and score accuracy.}
Left: kinetic energy. Middle: relative Fisher divergence.
Right: entropy decay rate.
}
\label{fig:bkw3d_phys_fisher}
\end{figure}

\begin{figure}[H]
\centering
\includegraphics[width=0.8\linewidth]{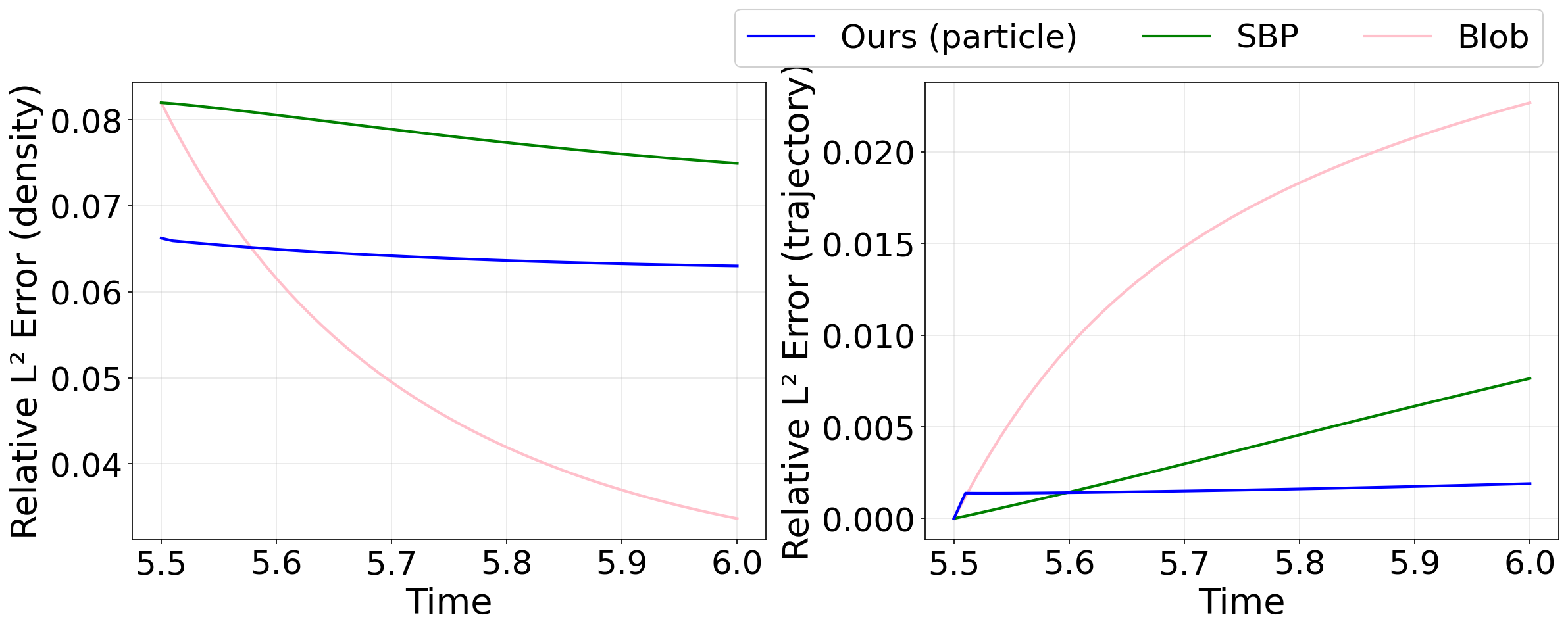}
\caption{\textbf{BKW-3D: density $L^2$ error.}
Relative $L^2$ error of KDE reconstruction . Comparison of PINN--PM (particle and score variants), SBP, and Blob. Density is reconstructed using KDE with bandwidth $\varepsilon=0.15$ and evaluated on a $30\times30\times30$ grid against the analytical solution.
}
\label{fig:bkw3d_l2}
\end{figure}

\subsection{Reference-free benchmarks: Gaussian mixtures, Rosenbluth, anisotropic, and truncated data}\label{subsec:ref_free}

We next consider benchmark problems for which no closed-form analytical solution is available. In contrast to the BKW tests, where quantitative $L^2_v$ and Fisher errors can be computed against an oracle reference, the goal here is to verify that PINN--PM preserves the {structural properties} of the Landau equation.

Recall that the spatially homogeneous Landau equation admits
the gradient-flow structure
\[
\partial_t f_t = \nabla_v \cdot \bigl(f_t \nabla_v \frac{\delta \mathcal{H}}{\delta f_t}\bigr),
\qquad
\mathcal{H}(f) = \int f \log f \, dv,
\]
which implies:

\begin{itemize}
\item conservation of mass and kinetic energy,
\[
\frac{d}{dt}\int f_t\,dv = 0,
\qquad
\frac{d}{dt}\int |v|^2 f_t\,dv = 0,
\]
\item monotone entropy dissipation,
\[
\frac{d}{dt}\mathcal{H}(f_t) \le 0.
\]
\end{itemize}

Accordingly, in the absence of an analytical solution, we assess numerical behavior through: (i) qualitative density evolution,
(ii) score regularity, and (iii) macroscopic invariant preservation.

\subsubsection{Gaussian mixture initial data}
\label{subsec:gm}

We first consider Gaussian mixture initial data of the form
\[
f_0(v) = \sum_{k=1}^{K} w_k \,\mathcal{N}(v;\mu_k,\Sigma_k),
\]
with separated means $\mu_k$. This configuration tests whether the learned transport correctly captures nonlinear interaction between modes without introducing artificial merging or oscillations.

Figure~\ref{fig:gm_slice} shows that the multi-peak structure
is preserved over time.  The learned score fields in Figure~\ref{fig:gm_scatter} remain smooth and aligned with the modal geometry, indicating that the implicit score-matching objective produces a stable global representation even for multi-modal densities. Finally, Figure~\ref{fig:physics_gm} reports the kinetic energy and entropy dissipation.
\[
\mathcal{E}(t)=\frac12 \int |v|^2 f_t(v)\,dv,
\]
which remains stable throughout the simulation, confirming that no artificial numerical drift is introduced despite the absence of explicit time stepping.

\begin{figure}[H]
\centering
\includegraphics[width=0.8\linewidth]{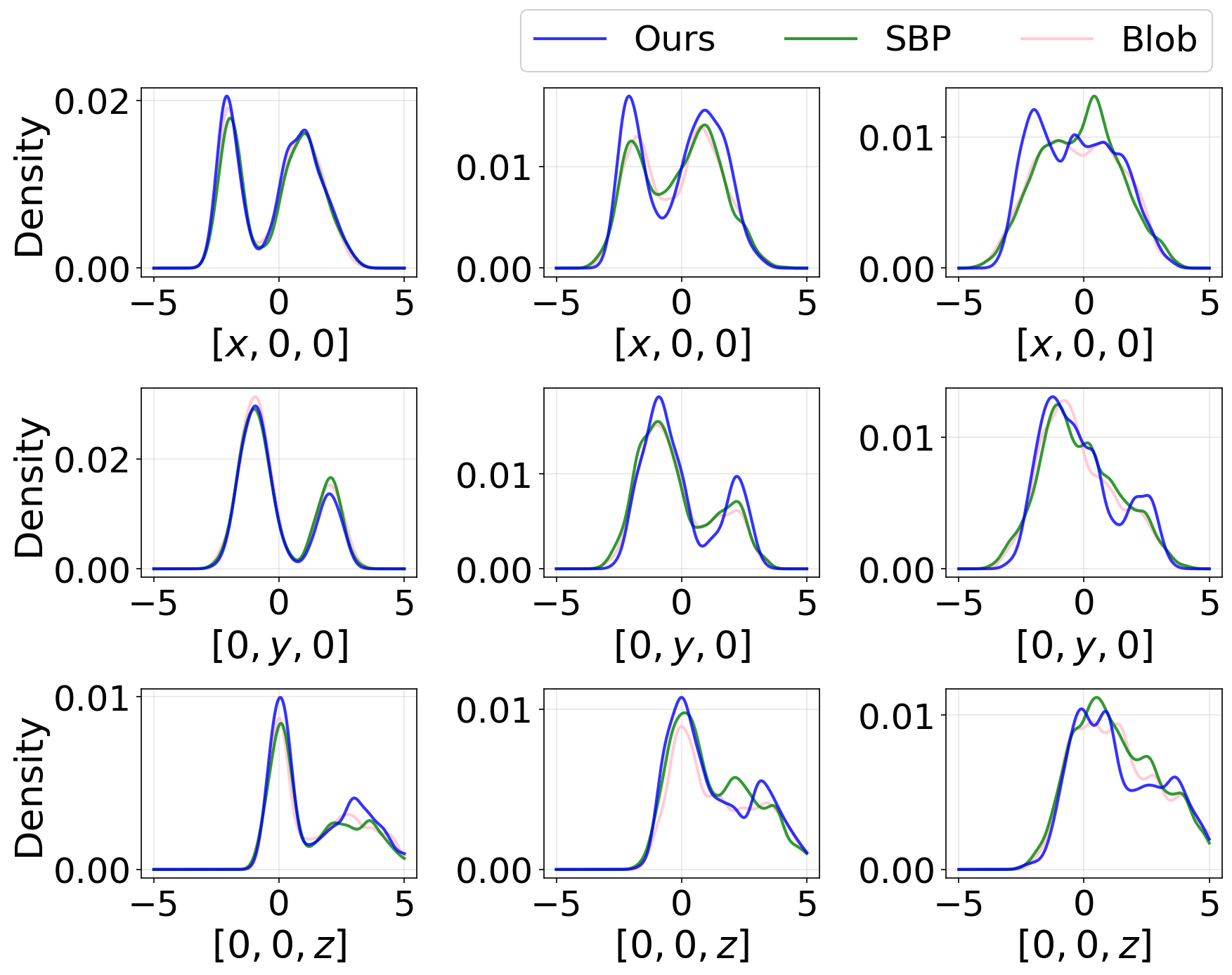}
\caption{\textbf{Gaussian mixture: density slices.}
One-dimensional slices at $t\in\{2.5,15,40\}$.
Comparison of PINN--PM, SBP, and Blob.
Density reconstructed via KDE (bandwidth $\varepsilon=0.15$).
}
\label{fig:gm_slice}
\end{figure}

\begin{figure}[H]
\centering
\includegraphics[width=0.8\linewidth]{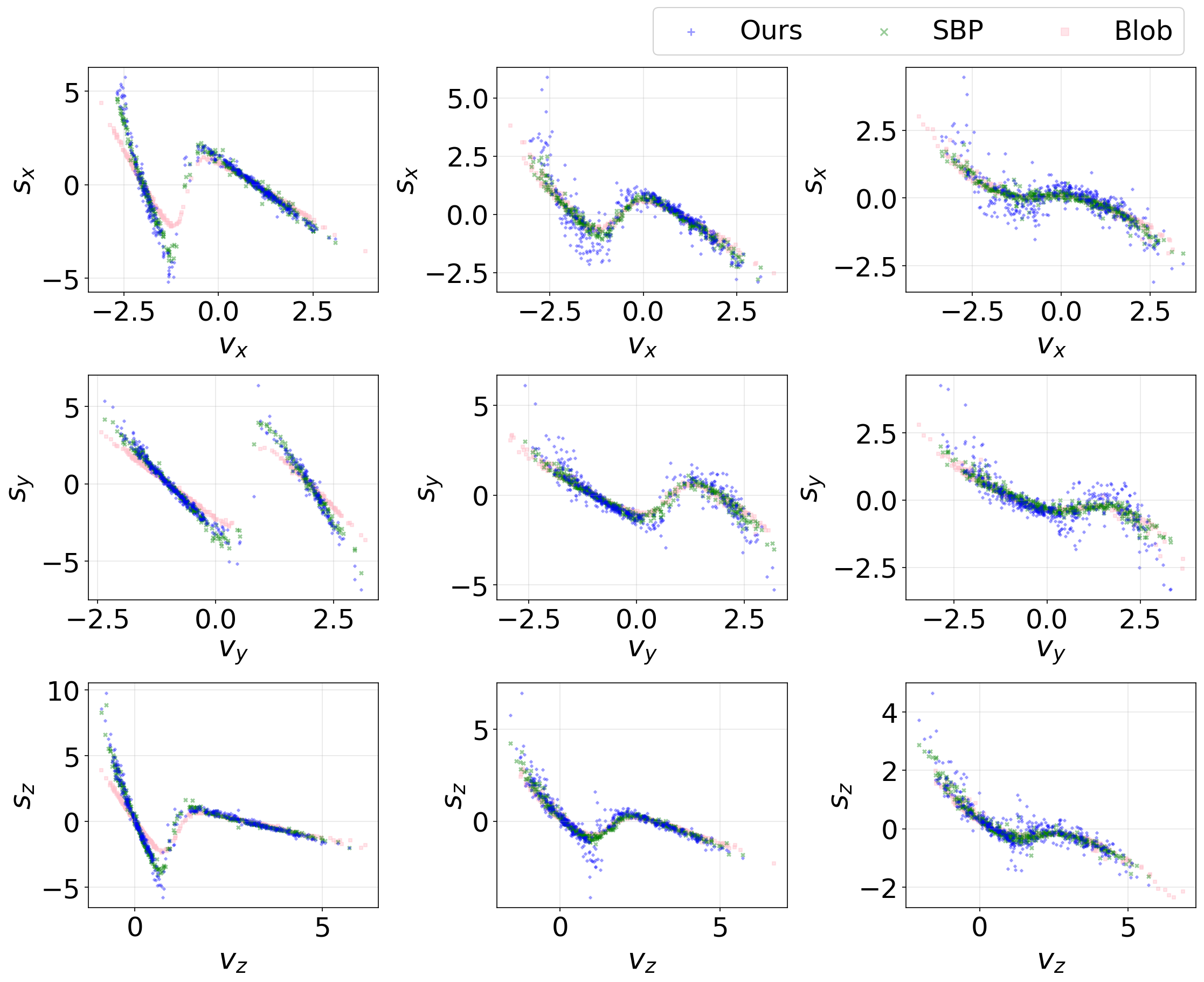}
\caption{
\textbf{Gaussian mixture: score scatter plots.} Score components versus velocity components at $t\in\{2.5,15,40\}$.
Comparison of PINN--PM, SBP and Blob.
}\label{fig:gm_scatter}
\end{figure}

\begin{figure}[H]
\centering
\includegraphics[width=0.8\linewidth]{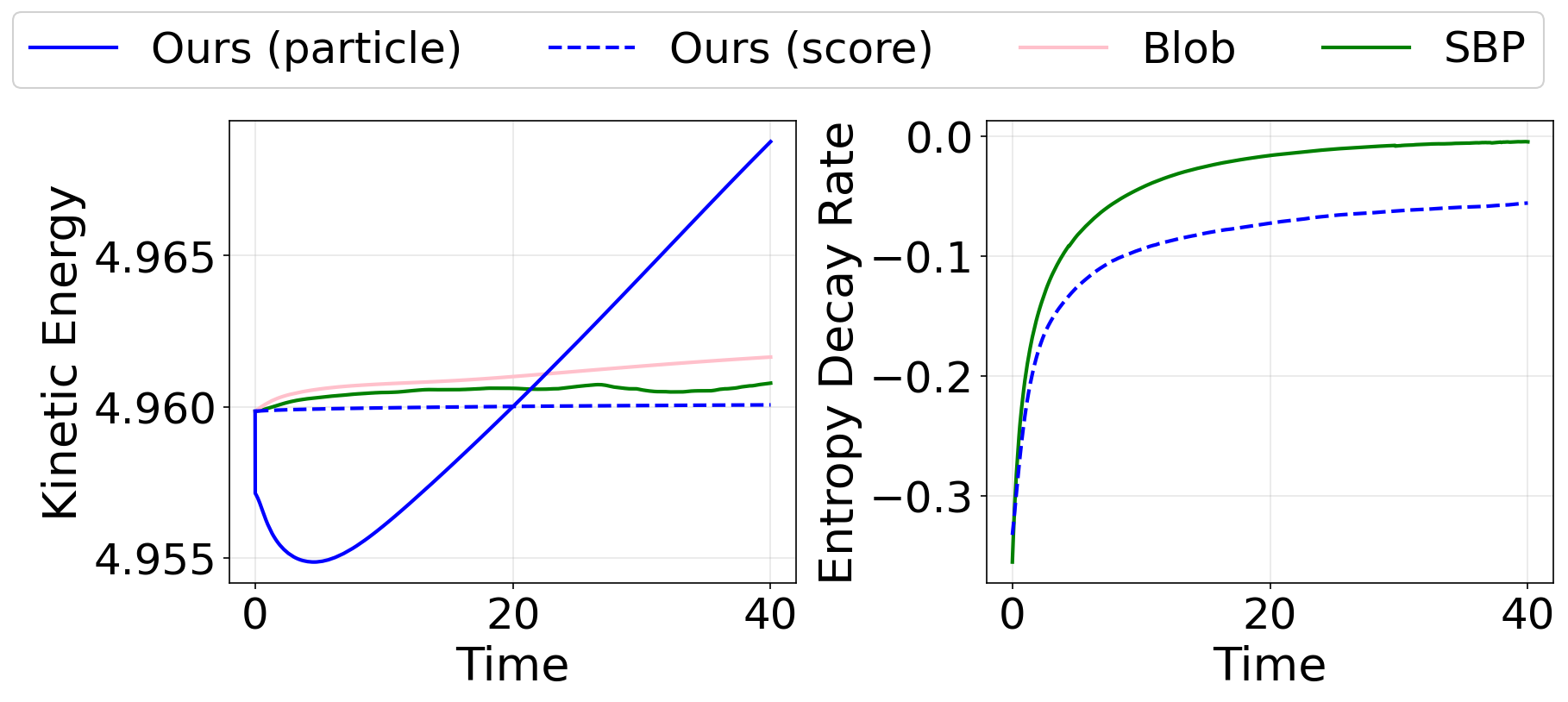}
\caption{
\textbf{Gaussian mixture: structural diagnostics.}
Left: kinetic energy. Right: entropy decay rate. Comparison of PINN--PM, SBP, and Blob.
}
\label{fig:physics_gm}
\end{figure}

\subsubsection{Rosenbluth distribution (Coulomb case)}\label{subsec:rosen}
We initialize particles using the Rosenbluth distribution
used in \cite{YL2025}.
The initial density is defined as
\begin{equation}
f_0(v)
=
\frac{1}{S^2}
\exp\!\left(
-\,S\,\frac{(|v|-\sigma)^2}{\sigma^2}
\right).
\end{equation}
The Rosenbluth distribution provides a challenging test in the Coulomb regime ($\gamma=-3$), where the interaction kernel exhibits near-singular behavior.
This example probes the robustness of the learned score and trajectory residual control under stronger nonlinear interactions.

Figure~\ref{fig:rosen_slice} shows that PINN--PM preserves
the characteristic ring-like geometry. The score scatter plots in Figure~\ref{fig:rosen_score} demonstrate that the learned score remains regular,
without spurious oscillations near high-curvature regions. The kinetic energy diagnostic in Figure~\ref{fig:rosen_kinetic} remains bounded and consistent with conservation laws, indicating that the residual-controlled dynamics remains stable even in the Coulomb setting.

\begin{figure}[H]
\centering
\includegraphics[width=0.8\linewidth]{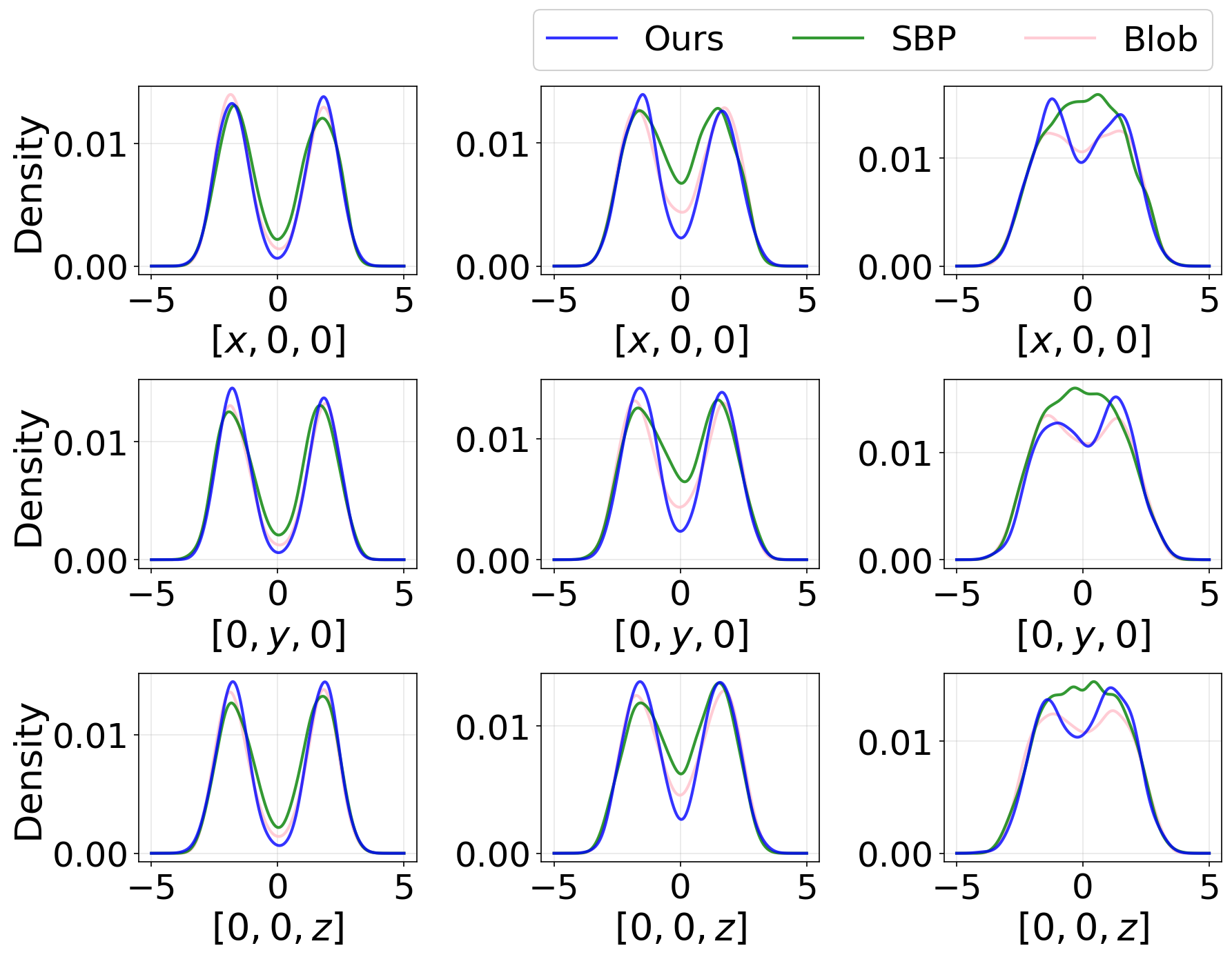}
\caption{
\textbf{Rosenbluth distribution: density slices.}
One-dimensional slices at $t\in\{5,10,20\}$. Comparison of PINN--PM, SBP, and Blob. KDE bandwidth $\varepsilon=0.3$.
}
\label{fig:rosen_slice}
\end{figure}

\begin{figure}[H]
\centering
\includegraphics[width=0.8\linewidth]{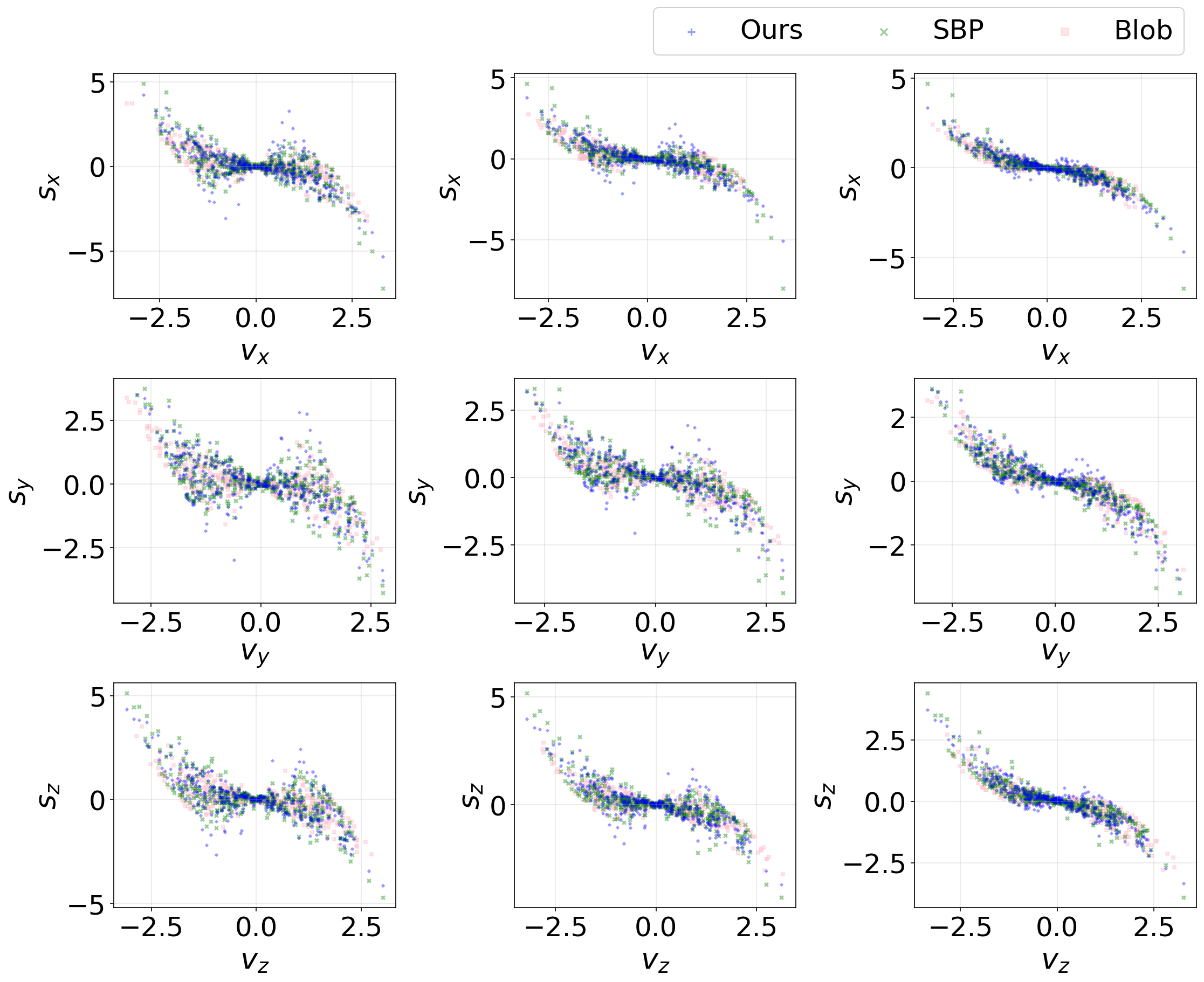}
\caption{
\textbf{Rosenbluth distribution: score scatter plots.} Score components versus velocity components at $t\in\{5,10,20\}$. Comparison of PINN--PM, SBP and Blob.
}
\label{fig:rosen_score}
\end{figure}

\begin{figure}[H]
\centering
\includegraphics[width=0.8\linewidth]{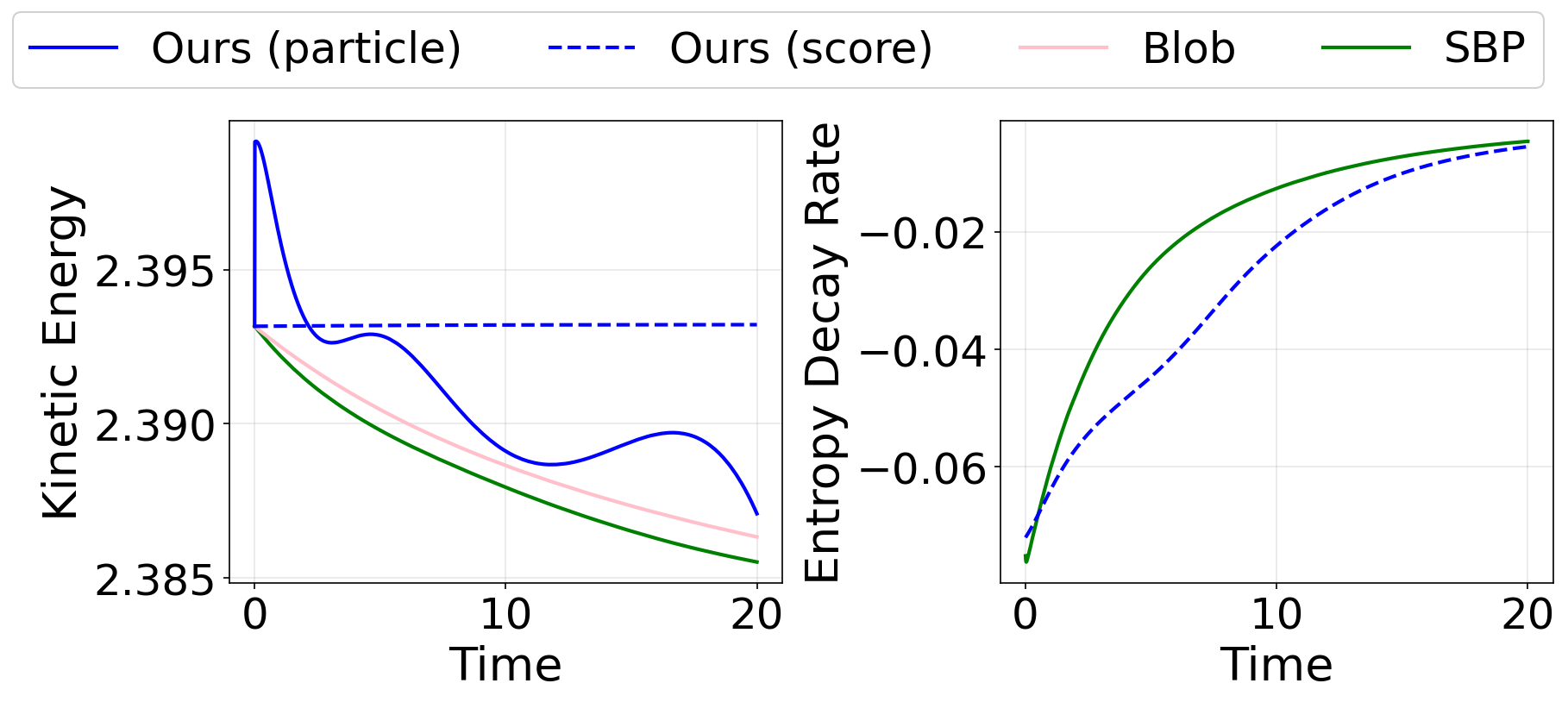}
\caption{
\textbf{Rosenbluth distribution: structural diagnostics.}
Left: kinetic energy. Right: entropy decay rate. Comparison of PINN--PM, SBP, and Blob.
}\label{fig:rosen_kinetic}
\end{figure}

\subsubsection{Anisotropic initial data}\label{subsec:anisotropic}
We finally consider anisotropic initial data, where the initial covariance matrix is non-isotropic:
\[
f_0(v) = \mathcal{N}(v;0,\Sigma_0), \qquad \Sigma_0 \neq \lambda I.
\]
Under Landau dynamics, the solution relaxes toward an
isotropic equilibrium. Figure~\ref{fig:ani_slice} shows progressive isotropization of the density. 

The score evolution in Figure~\ref{fig:ani_score}
illustrates increasing symmetry in velocity space,
consistent with entropy-driven relaxation.

The kinetic energy in Figure~\ref{fig:ani_kinetic} remains stable,
and the entropy decay rate (right panel) exhibits monotone dissipation, consistent with the gradient-flow structure of the Landau equation.
\begin{figure}[H]
\centering
\includegraphics[width=0.8\linewidth]{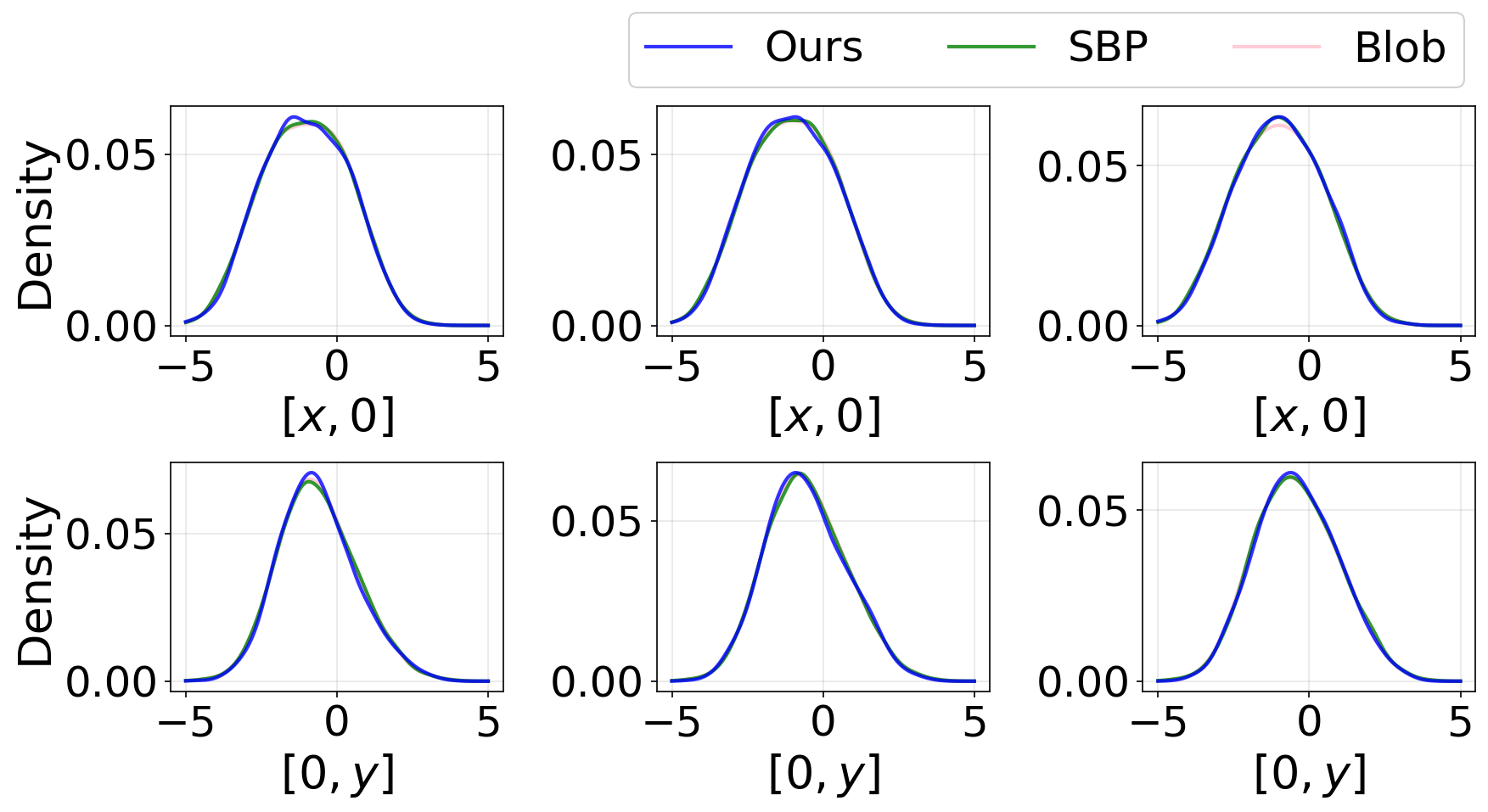} 
\caption{\textbf{Anisotropic initial data: score scatter plots.}
Scatter plots of score components versus velocity components,
showing increasing alignment over time at $t=\{10,20,40\}.$ KDE bandwidth $\varepsilon=0.3$.}
\label{fig:ani_slice}
\end{figure}

\begin{figure}[H]
\centering
\includegraphics[width=0.8\linewidth]{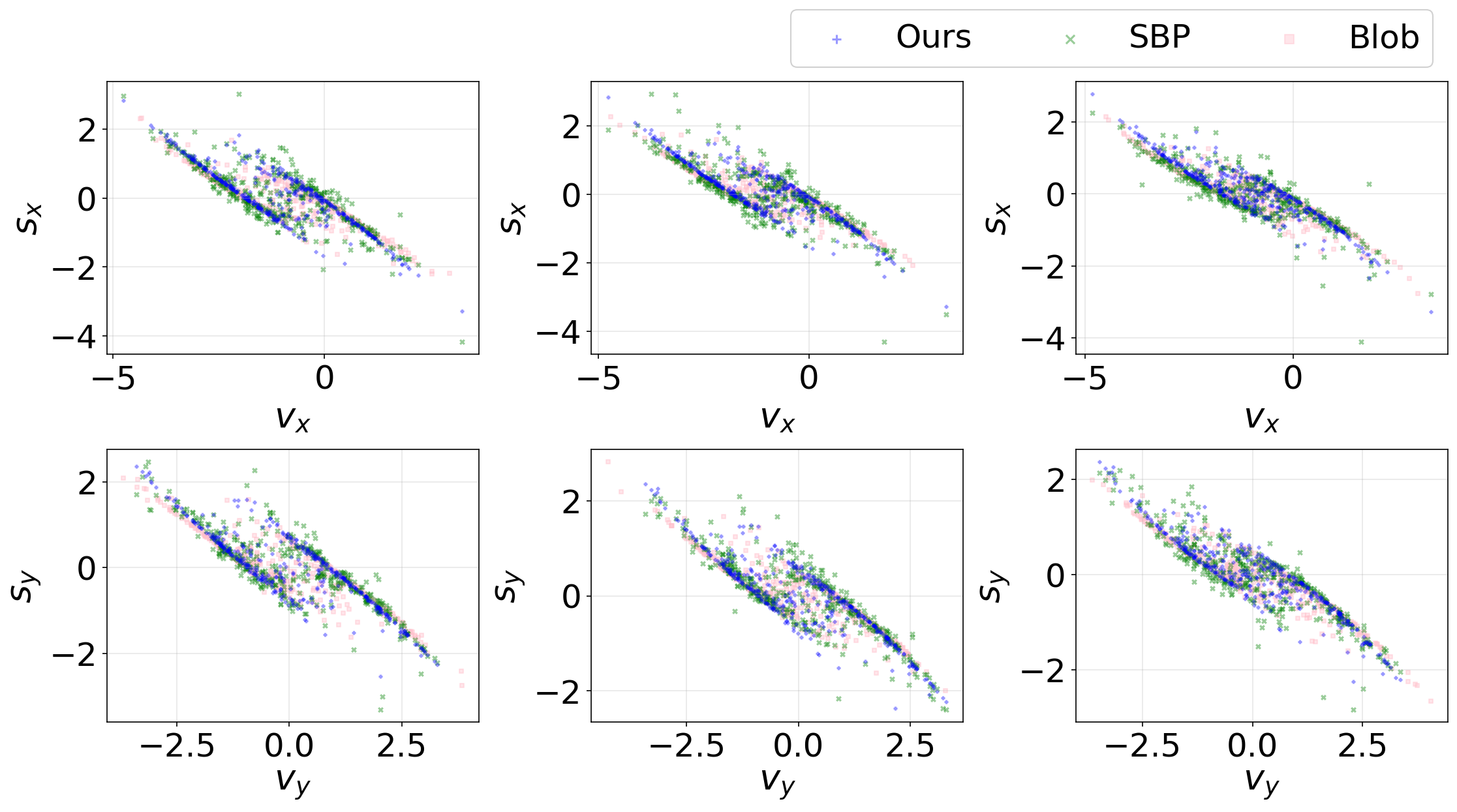} 
\caption{\textbf{Anisotropic initial data: score scatter plots.}
Scatter plots of score components versus velocity components,
showing increasing alignment over time at $t=\{10,20,40\}.$}
\label{fig:ani_score}
\end{figure}

\begin{figure}[H]
\centering
\includegraphics[width=0.8\linewidth]{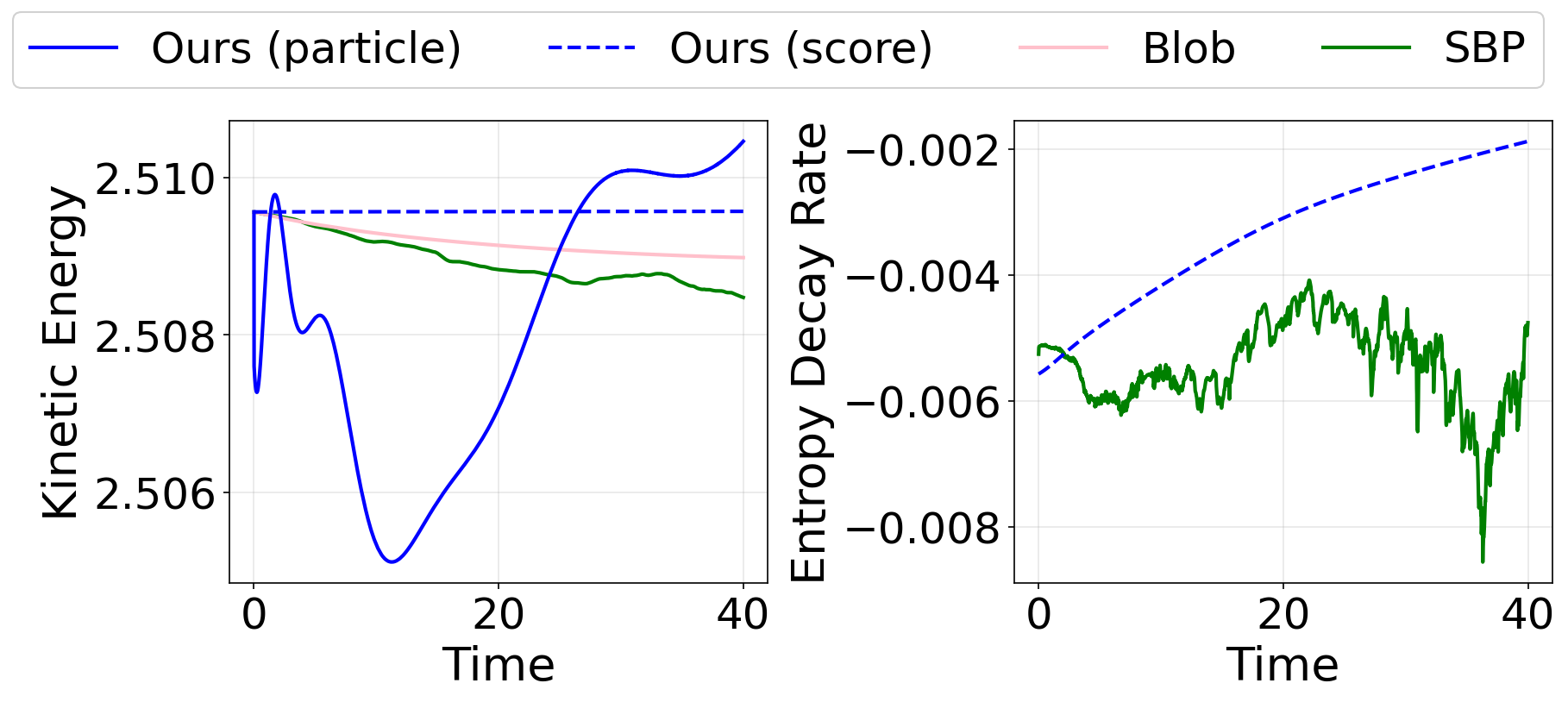} 
\caption{
\textbf{Anisotropic initial data: structural diagnostics.}
Left: kinetic energy. Right: entropy decay rate. Monotone dissipation indicates gradient-flow relaxation toward isotropy.
}
\label{fig:ani_kinetic}
\end{figure}

\subsubsection{Truncated distribution (2D)}
\label{subsec:truncated}

We finally consider a truncated initial distribution in two dimensions,
where the support of the density is compact and exhibits sharp gradients near the boundary.
This configuration probes the robustness of the learned score and transport map
under reduced smoothness and boundary-sensitive geometry.

Unlike Gaussian-type initial data, the truncated distribution
contains non-smooth features at the support boundary.
Such configurations are particularly challenging for
kernel-based particle methods due to smoothing artifacts.

Figure~\ref{fig:tr_slice} shows density slices at representative times.
PINN--PM maintains stability and avoids spurious oscillations near the truncated region.

Figure~\ref{fig:tr_score} illustrates the score scatter plots.
The learned score remains regular and aligned with the transport direction,
despite the reduced regularity of the initial data.

Finally, Figure~\ref{fig:tr_kinetic} reports the kinetic energy and entropy decay rate.
The evolution remains stable and consistent with the
dissipative structure of the Landau equation.

\begin{figure}[H]
\centering
\includegraphics[width=0.8\linewidth]{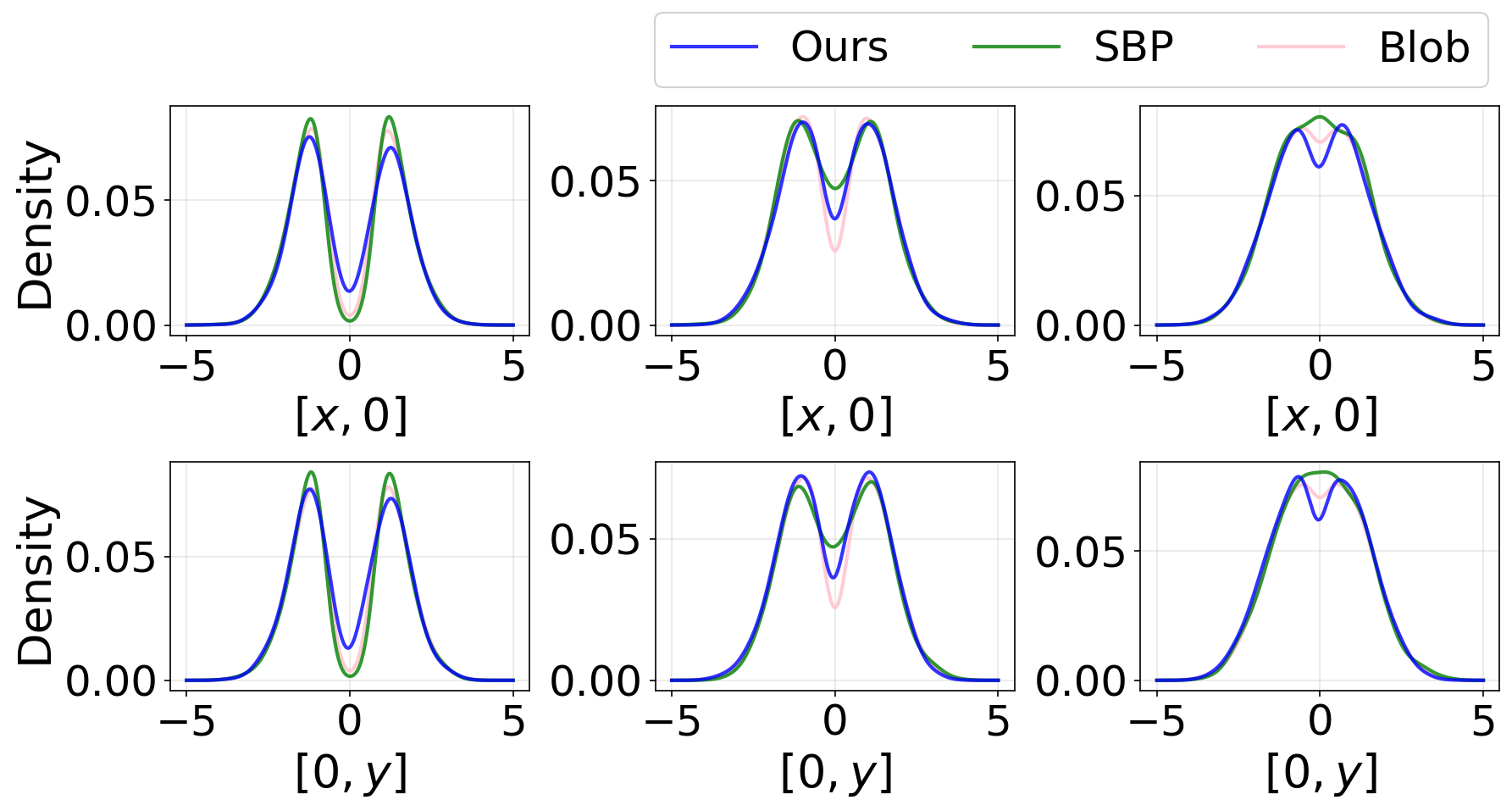}
\caption{
\textbf{Truncated 2D: density slices.}
One-dimensional slices at time at $t=\{1,2.5,5\}$.
Comparison of PINN--PM, SBP, and Blob.
KDE bandwidth $\varepsilon=0.3$.
}
\label{fig:tr_slice}
\end{figure}

\begin{figure}[H]
\centering
\includegraphics[width=0.8\linewidth]{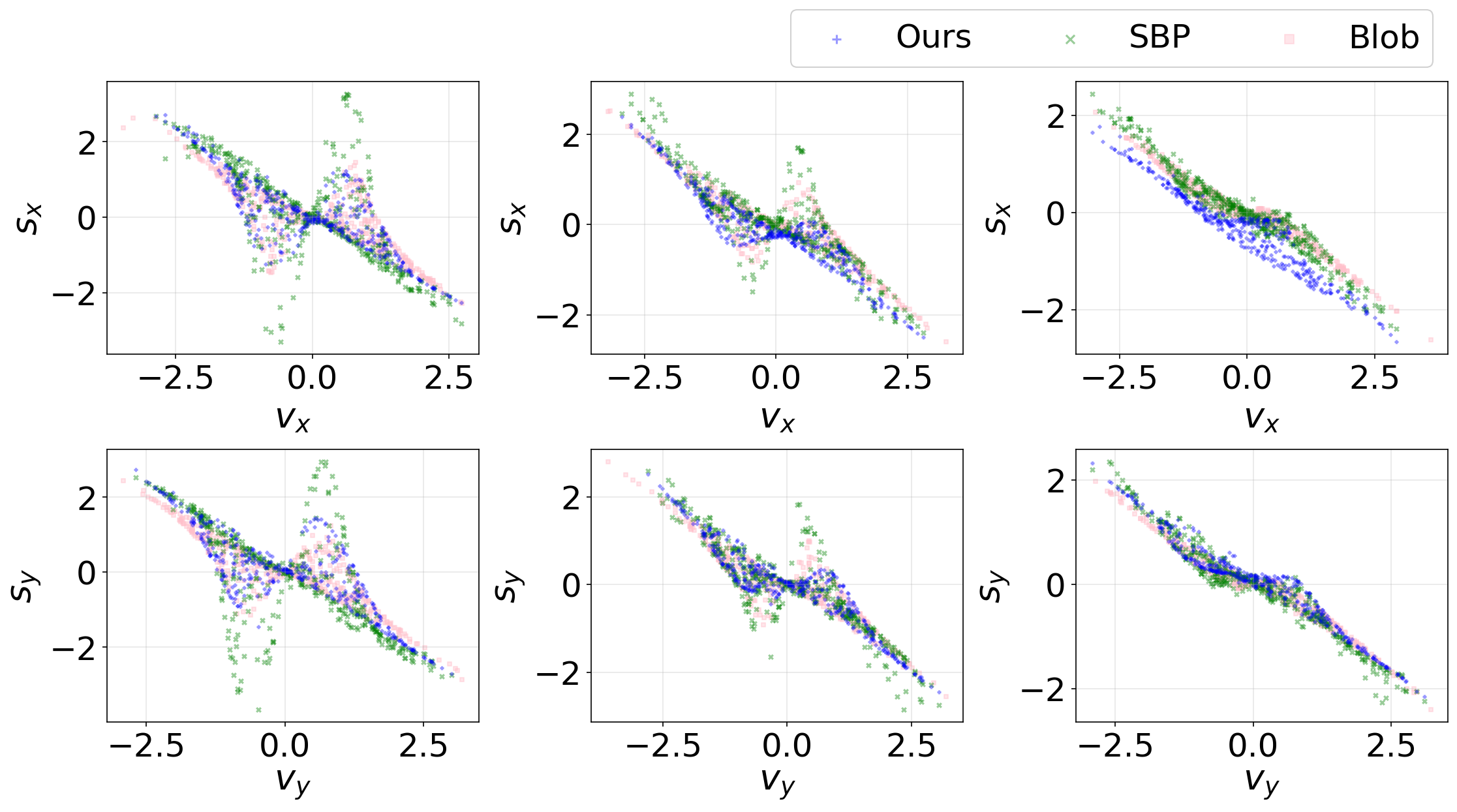}
\caption{
\textbf{Truncated 2D: score scatter plots.}
Score components versus velocity components over time at $t=\{1,2.5,5\}$.
}
\label{fig:tr_score}
\end{figure}

\begin{figure}[H]
\centering
\includegraphics[width=0.8\linewidth]{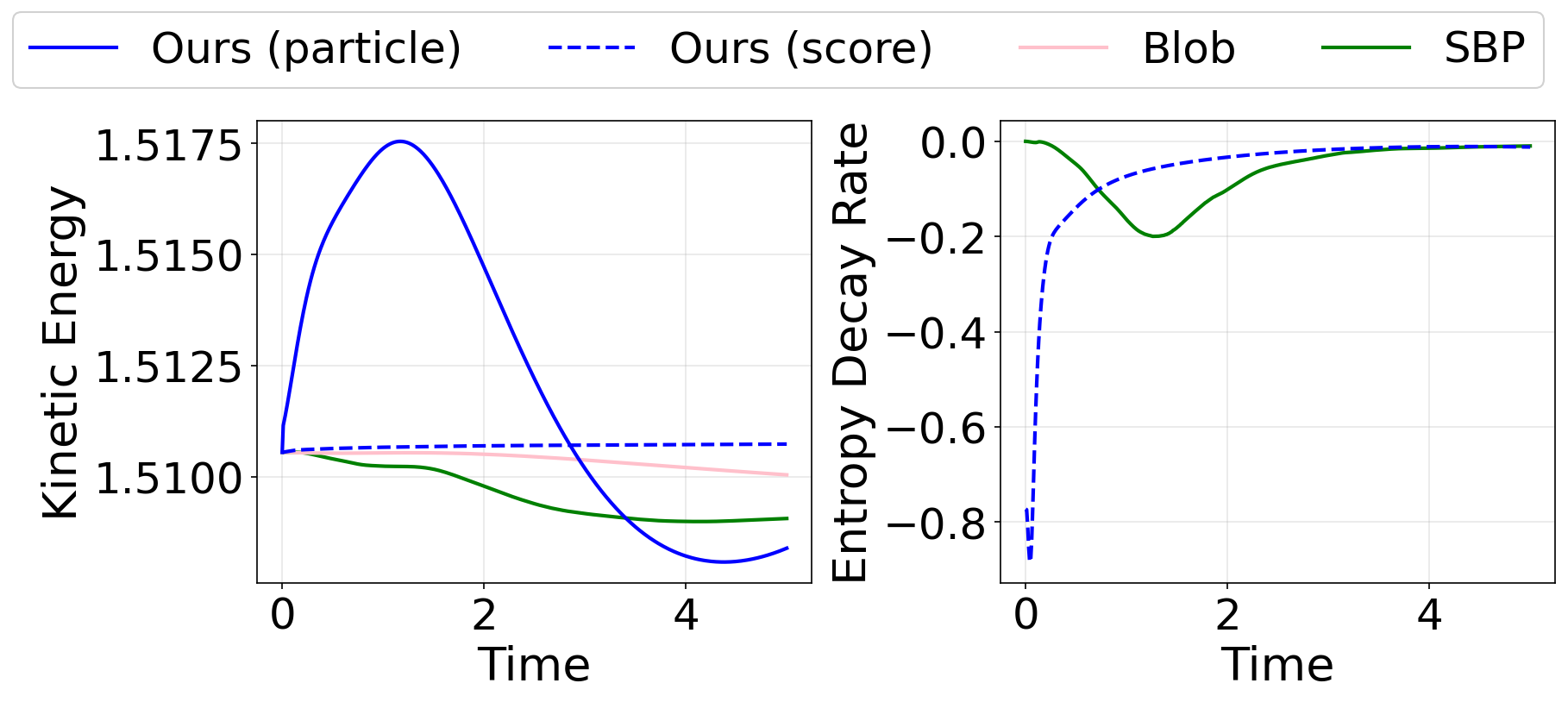}
\caption{
\textbf{Truncated 2D: structural diagnostics.}
Left: kinetic energy. Right: entropy decay rate.
}
\label{fig:tr_kinetic}
\end{figure}

\section{Discussion}\label{subsec:discussion}
We introduced PINN--PM, a global-in-time neural particle method for the spatially homogeneous Landau equation. By jointly parameterizing the score and the characteristic flow and enforcing the Landau dynamics through a continuous-time residual, the method removes explicit time stepping and yields a mesh-free neural particle simulator.

Our analysis establishes an end-to-end accuracy certificate: 
training-time quantities—the implicit score-matching excess risk and the physics residual—control deployment-time errors in trajectory and density. In particular, we proved that the Wasserstein discrepancy between the learned empirical measure and the true Landau solution is bounded by the accumulated score error, residual energy, and the intrinsic Monte Carlo rate. This stability result further propagates to density reconstruction via kernel density estimation, yielding an explicit bias–variance–trajectory decomposition.

Numerical experiments on analytical (BKW) and reference-free benchmarks confirm the theoretical structure: score accuracy, trajectory stability, and macroscopic invariant preservation are consistently aligned with the derived bounds. Empirically, PINN--PM achieves competitive or improved accuracy with significantly fewer particles than time-stepping score-based particle methods.

Overall, the proposed framework demonstrates that physics-informed global-in-time learning can transform interacting particle solvers into amortized neural simulators, while retaining quantitative error guarantees rooted in kinetic theory.

\section*{Reproducibility}

We provide detailed information to facilitate the reproducibility of all numerical
experiments presented in this paper. In addition to the descriptions below,
the complete source code used to generate all results is publicly available at

\begin{center}
\url{https://github.com/tomatofromsky/kinetic-score-landau-pinn}.
\end{center}

\section*{CRediT authorship contribution statement}

Minseok Kim: Software, Investigation, Validation, Visualization,
Writing -- original draft, Writing -- review \& editing. Sung-Jun Son: Software, Investigation, Validation, Writing -- review \& editing. Yeoneung Kim: Conceptualization, Methodology, Formal analysis, Writing -- original draft, Writing -- review \& editing, Supervision. Donghyun Lee: Conceptualization, Methodology,
Writing -- review \& editing, Supervision.

\section*{Declaration of competing interest}

The authors declare that they have no known competing financial interests
or personal relationships that could have appeared to influence the work
reported in this paper.

\section*{Data availability}

The source code used to reproduce the numerical experiments in this work
is publicly available at:

\begin{center}
\url{https://github.com/tomatofromsky/kinetic-score-landau-pinn}.
\end{center}

All datasets used in this study are synthetically generated from the
prescribed initial distributions described in the paper.

\section*{Acknowledgements}

Yeoneung Kim and Minseok Kim are supported by the National Research Foundation of Korea
(NRF) grant funded by the Korea government (MSIT) (RS-2023-00219980, RS-2023-00211503).  Donghyun Lee and Sung-Jun Son are supported by the National Research Foundation of Korea(NRF) grant funded by the Korea government(MSIT)(No.RS-2023-00212304 and No.RS-2023-00219980). Sung-Jun Son is supported by the National Research Foundation of Korea(NRF) grant funded by the Korea government(MSIT and MOE)(No.RS-2023-00210484 and No.RS-2025-25419038).

\bibliographystyle{plain}
\bibliography{ref.bib}

\appendix
\section{Proof of Theorem~\ref{thm:kde}}\label{app:density_proof}
We expand the argument in two steps.

Fix $t\in[0,T]$ and $\varepsilon>0$.  Introduce the {oracle KDE} built from the
exact characteristics $\{v_t^{(i)}\}_{i=1}^N$:
\[
\tilde f_{t,N,\varepsilon}(v)
:=
\frac1N\sum_{i=1}^N \varepsilon^{-d}K\!\Big(\frac{v-v_t^{(i)}}{\varepsilon}\Big)
=
\frac1N\sum_{i=1}^N K_\varepsilon(v-v_t^{(i)}),
\qquad
K_\varepsilon(x):=\varepsilon^{-d}K(x/\varepsilon).
\]
Then, by the triangle inequality and $(a+b+c)^2\le 3(a^2+b^2+c^2)$,
\begin{equation}\label{eq:decomp_three_terms}
\|f^{\mathrm{approx}}_{t,N,\varepsilon}-\tilde f_t\|_{L^2_v}^2
\;\le\;
3\|f^{\mathrm{approx}}_{t,N,\varepsilon}-\tilde f_{t,N,\varepsilon}\|_{L^2_v}^2
+3\|\tilde f_{t,N,\varepsilon}-\E[\tilde f_{t,N,\varepsilon}]\|_{L^2_v}^2
+3\|\E[\tilde f_{t,N,\varepsilon}]-\tilde f_t\|_{L^2_v}^2 .
\end{equation}
We bound each term on the right-hand side.

Since $v_t^{(i)}\stackrel{\mathrm{i.i.d.}}{\sim}\tilde f_t$, we have
$\E[\tilde f_{t,N,\varepsilon}]=K_\varepsilon * \tilde f_t$.
Assume $\tilde f_t\in W^{2,\infty}(\T^d)$ and $K$ is symmetric with $\int K=1$.
A second-order Taylor expansion gives the standard bias estimate
\begin{equation}\label{eq:bias_bound}
\|\E[\tilde f_{t,N,\varepsilon}]-\tilde f_t\|_{L^2_v}^2
=
\|K_\varepsilon * \tilde f_t-\tilde f_t\|_{L^2_v}^2
\;\le\;
C_{\mathrm{bias}}\,\varepsilon^{4},
\end{equation}
where $C_{\mathrm{bias}}$ depends only on $\|D^2\tilde f_t\|_{L^\infty_v}$ and the second moment
$\int_{\T^d}|u|^2|K(u)|\,du$.

Write
\[
\tilde f_{t,N,\varepsilon}(v)-\E[\tilde f_{t,N,\varepsilon}(v)]
=
\frac1N\sum_{i=1}^N\Big(K_\varepsilon(v-v_t^{(i)})-\E[K_\varepsilon(v-V_t)]\Big),
\qquad V_t\sim \tilde f_t.
\]
Using independence, centering, and Fubini,
\begin{equation}\label{eq:variance_bound}
\E\|\tilde f_{t,N,\varepsilon}-\E[\tilde f_{t,N,\varepsilon}]\|_{L^2_v}^2
=
\frac{1}{N}\,
\E\Big\|K_\varepsilon(\cdot-V_t)-\E[K_\varepsilon(\cdot-V_t)]\Big\|_{L^2_v}^2
\;\le\;
\frac{1}{N}\,\|K_\varepsilon\|_{L^2_v}^2
=
\frac{1}{N\,\varepsilon^{d}}\|K\|_{L^2_v}^2 .
\end{equation}

Let $e_t^{(i)}:=\hat v_t^{(i)}-v_t^{(i)}$.
Since $K\in C^1$ (hence $K_\varepsilon\in C^1$), by the mean value theorem,
for each $i$ and each $v$,
\[
K_\varepsilon(v-\hat v_t^{(i)})-K_\varepsilon(v-v_t^{(i)})
=
-\int_0^1 \nabla_v K_\varepsilon\!\big(v-(v_t^{(i)}+s e_t^{(i)})\big)\cdot e_t^{(i)}\,ds.
\]
Taking $L^2_v$ norms and using translation invariance of $\|\cdot\|_{L^2_v}$ yields
\[
\|K_\varepsilon(\cdot-\hat v_t^{(i)})-K_\varepsilon(\cdot-v_t^{(i)})\|_{L^2_v}
\le
\|\nabla_v K_\varepsilon\|_{L^2_v}\,\|e_t^{(i)}\|
=
\varepsilon^{-(d/2+1)}\|\nabla_v K\|_{L^2_v}\,\|e_t^{(i)}\|.
\]
Therefore, by Jensen and Cauchy--Schwarz,
\begin{align}
\|f^{\mathrm{approx}}_{t,N,\varepsilon}-\tilde f_{t,N,\varepsilon}\|_{L^2_v}^2
&=
\Big\|\frac1N\sum_{i=1}^N\big(K_\varepsilon(\cdot-\hat v_t^{(i)})-K_\varepsilon(\cdot-v_t^{(i)})\big)\Big\|_{L^2_v}^2 \notag\\
&\le
\frac1N\sum_{i=1}^N
\|K_\varepsilon(\cdot-\hat v_t^{(i)})-K_\varepsilon(\cdot-v_t^{(i)})\|_{L^2_v}^2 \notag\\
&\le
\varepsilon^{-(d+2)}\|\nabla_v K\|_{L^2_v}^2\cdot \frac1N\sum_{i=1}^N\|e_t^{(i)}\|^2
=
\varepsilon^{-(d+2)}\|\nabla_v K\|_{L^2_v}^2\,E(t).
\label{eq:traj_term_bound}
\end{align}
Taking expectation of \eqref{eq:traj_term_bound} gives
\[
\E\|f^{\mathrm{approx}}_{t,N,\varepsilon}-\tilde f_{t,N,\varepsilon}\|_{L^2_v}^2
\le
\varepsilon^{-(d+2)}\|\nabla_v K\|_{L^2_v}^2\,\E E(t).
\]

Taking expectations in \eqref{eq:decomp_three_terms} and using
\eqref{eq:bias_bound}, \eqref{eq:variance_bound}, and \eqref{eq:traj_term_bound},
we obtain
\[
\E\|f^{\mathrm{approx}}_{t,N,\varepsilon}-\tilde f_t\|_{L^2_v}^2
\le
C\Big(
\varepsilon^4+\frac{1}{N\varepsilon^d}+\varepsilon^{-(d+2)}\,\E E(t)
\Big),
\]
which is exactly \eqref{eq:density_bound_general} (absorbing the factor $3$ into $C$).

By the master $W_1$-certificate (Theorem~\ref{thm:master_w1}) and the intermediate estimate
\eqref{eq:E_gronwall_expect} therein, there exists $C(T)>0$ such that
\begin{equation}\label{eq:EE_from_master}
\E E(t)
\le
C(T)\Bigg(
\int_0^t \mathbb{E}\widehat{\mathcal E}_{\mathrm{ISM}}(\tau)\,d\tau
+\int_0^t \E\,\delta_{\mathrm{phys}}(\tau)^2\,d\tau
+N^{-1/2}
\Bigg).
\end{equation}
Substituting \eqref{eq:EE_from_master} into \eqref{eq:density_bound_general} yields
\eqref{eq:density_bound_training} (after renaming constants).
This completes the proof.

\section{Experimental details}\label{app:exp_config}

\subsection{Common Settings}
Unless otherwise stated, all experiments use time step $\Delta t = 0.01$. For SBP and Blob baselines, the number of iterations is fixed to 25, and the learning rate is $10^{-4}$ whenever neural networks are used. For all PINN-based models, Adam optimizer with learning rate $10^{-4}$ is used. The activation function is SiLU.

\begin{table}[h]
\centering
\caption{Training configurations across problems}
\begin{tabular}{lcccc}
\toprule
Problem & $N$~SBP\cite{YL2025} & $N$~Blob\cite{JKF2019} & KDE bw & $c_\gamma$ \\
\midrule
BKW 2D & 22,500 & 22,500 & 0.15 & 1 \\
BKW 3D & 64,000 & 64,000 & 0.15 & 1 \\
Anisotropic 2D & 14,400 & 14,400 & 0.3 & 1 \\
Rosenbluth 3D & 27,000 & 27,000 & 0.3 & 1 \\
Gaussian mixture 3D & 64,000 & 64,000 & 0.15 & 1 \\
Truncated 2D & 14,400 & 14,400 & 0.3 & 1 \\
\bottomrule
\end{tabular}
\end{table}

For all neural networks for SBP, hidden width is 32 and depth is 3,
with learning rate $10^{-4}$.

\subsection{PINN Architectures}
\label{app:pinn_arch}

All PINN experiments use $N=1000$ particles. The architecture consists of two networks:
(i) a {Particle Network} and (ii) a {Score Network}.
Both networks use SiLU activation and the Adam optimizer with learning rate $10^{-4}$.

\paragraph{Group A: 2D Standard Configuration.}
Applied to: \textbf{BKW 2D}, \textbf{Anisotropic 2D}. \\
The Particle Network uses a velocity embedding of width $32$ and depth $2$,
a time embedding of width $16$ and depth $1$,
followed by hidden layers of width $128$ and depth $4$.
The Score Network uses the same architecture.

\paragraph{Group B: 3D Large-Scale Configuration.}
Applied to: \textbf{BKW 3D}, \textbf{Gaussian mixture 3D}. \\
The Particle Network uses a velocity embedding of width $256$ and depth $2$,
a time embedding of width $128$ and depth $1$,
followed by hidden layers of width $256$ and depth $6$.
The Score Network uses the same architecture.

\paragraph{Group C: Moderate 3D Configuration.}
Applied to: \textbf{Rosenbluth 3D}. \\
The Particle Network uses a velocity embedding of width $32$ and depth $2$,
a time embedding of width $64$ and depth $1$,
followed by hidden layers of width $128$ and depth $4$.
The Score Network uses a velocity embedding of width $32$ and depth $2$,
a time embedding of width $16$ and depth $1$,
followed by hidden layers of width $128$ and depth $4$.

\paragraph{Group D: Mixed-Scale Configuration.}
Applied to: \textbf{Truncated 2D}. \\
The Particle Network follows the 2D standard configuration:
velocity embedding of width $32$ and depth $2$,
time embedding of width $16$ and depth $1$,
followed by hidden layers of width $128$ and depth $4$.
The Score Network follows the 3D large-scale configuration:
velocity embedding of width $256$ and depth $2$,
time embedding of width $128$ and depth $1$,
followed by hidden layers of width $256$ and depth $6$.

\subsection{Initial Distributions and Interaction Kernels}
\label{app:init_distributions}

This appendix summarizes the initial distributions and interaction
kernels used in the numerical experiments reported in
Section~\ref{sec:exp}. Throughout the experiments, particles are initialized by i.i.d. sampling from the prescribed initial density $f_0$.
\paragraph{Two-dimensional BKW solution.}
The analytical density is
\[
\tilde f_t(v)
=
\frac{1}{2\pi K}
\exp\!\left(-\frac{|v|^2}{2K}\right)
\left(
\frac{2K-1}{K}
+
\frac{1-K}{2K^2}|v|^2
\right),
\]
where $K(t)=1-\tfrac12 e^{-t/8}$. The associated score is available analytically via $\tilde s_t(v)=\nabla_v\log \tilde f_t(v).$

\paragraph{Three-dimensional BKW solution.}
The density is
\[
\tilde f_t(v)
=
\frac{1}{(2\pi K)^{3/2}}
\exp\!\left(-\frac{|v|^2}{2K}\right)
\left(
\frac{5K-3}{2K}
+
\frac{1-K}{2K^2}|v|^2
\right),
\]
with $K(t)=1-e^{-t/6}$. These analytical solutions provide reference trajectories and scores
for evaluating transport accuracy and score consistency.

\paragraph{Gaussian mixture distribution}
We consider a multimodal Gaussian mixture distribution in $\mathbb{R}^3$. The density is constructed in a separable form
\[
f_0(v)
=
f_{0,1}(v_1)\,f_{0,2}(v_2)\,f_{0,3}(v_3),
\]
where
\[
\begin{aligned}
f_{0,1}(v_1)
&=
0.4\,\mathcal N(v_1;-2,0.3^2)
+
0.6\,\mathcal N(v_1;1,0.8^2), \\
f_{0,2}(v_2)
&=
0.7\,\mathcal N(v_2;-1,0.5^2)
+
0.3\,\mathcal N(v_2;2,0.4^2), \\
f_{0,3}(v_3)
&=
0.5\,\mathcal N(v_3;0,0.2^2)
+
0.5\,\mathcal N(v_3;3,1.2^2).
\end{aligned}
\]
This construction yields a mixture with $K=8$ Gaussian components.

\paragraph{Rosenbluth distribution}
We consider the Rosenbluth-type initial distribution used in~\cite{YL2025}:
\[
f_0(v)
=
\frac{1}{S^2}
\exp\!\left(
-\,S\,\frac{(|v|-\sigma)^2}{\sigma^2}
\right).
\]
In the experiments, we use $\sigma = 2$ and $S = 12$. This distribution generates a ring-shaped density in velocity space, which provides a challenging configuration for Coulomb interactions.

\paragraph{Anisotropic Gaussian distribution}

To test isotropization under Landau dynamics, we consider anisotropic
initial data. Specifically, we use a bimodal Gaussian distribution
\[
f_0(v)
=
\frac{1}{4\pi}
\left(
\exp\!\left(-\frac{|v-u_1|^2}{2}\right)
+
\exp\!\left(-\frac{|v-u_2|^2}{2}\right)
\right),
\]
where $u_1=(-2,1)$ and $u_2=(0,-1)$. This configuration produces two separated clusters in velocity space.

\paragraph{Truncated Gaussian distribution}

We consider a radially truncated Gaussian distribution in
$\mathbb{R}^2$. Let
\[
\phi_2(v)
=
\frac{1}{2\pi}
\exp\!\left(-\frac{|v|^2}{2}\right)
\]
denote the standard Gaussian density. The truncated density is defined by
\[
f_0(v)
=
\frac{\phi_2(v)}{\mathbb P(|Z|>\eta)}
\mathbf 1_{\{|v|>\eta\}},
\]
where
\[
Z\sim \mathcal N(0,I_2).
\]
Since $|Z|$ follows a Rayleigh distribution,
\[
\mathbb P(|Z|>\eta)=\exp(-\eta^2/2),
\]
and therefore
\[
f_0(v)
=
\frac{1}{2\pi}
\exp\!\left(-\frac{|v|^2}{2}\right)
\exp\!\left(\frac{\eta^2}{2}\right)
\mathbf 1_{\{|v|>\eta\}}.
\]
For our experiment, we set $\eta=1$.

\subsection{Discussion of Design Choices}

\paragraph{Dimensional scaling.}
For fully three-dimensional problems with complex geometry (BKW 3D and Gaussian mixture 3D), we increase both width and depth (256, 6 layers) to enhance representation capacity. Two-dimensional problems use a lighter architecture (128, 4 layers).

\paragraph{Bandwidth adjustment in Blob.}
Bandwidth is set to 0.15 for near-Gaussian distributions, and increased to 0.3 for anisotropic or truncated distributions, where sharper local structures require stronger smoothing.

\paragraph{Particle efficiency.}
While Li--Wang and Blob require 14,400--64,000 particles depending on the problem,
the PINN method consistently uses 1,000 particles. This highlights the efficiency gained from learning the score function and transport map via neural representations.

\end{document}